\newtheorem{theorem}{Theorem}
\newtheorem{lemma}{Lemma}
\newtheorem{cor}{Corollary}
\theoremstyle{definition}
\newtheorem{defi}[theorem]{Definition}
\newtheorem{remark}[theorem]{Remark}
\newcommand{\C}{\mathbb{C}}
\newcommand{\N}{\mathbb{N}}
\newcommand{\PP}{\mathbb{P}}
\newcommand{\ab}{{\bf a}}
\begin{document}

\title{{Normal family of meromorphic mappings and Big Picard's theorem}}

\author{Nguyen Van Thin and Wei Chen}
\address{Thai Nguyen University of Education, Department of Mathematics, Luong Ngoc Quyen  street, Thai Nguyen city, Thai Nguyen, Viet Nam.}
\address{School of Science, Chongqing University of Posts and Telecommunications, Chongqing, China}
\email{thinmath@gmail.com}
\email{chenwei198841@126.com}
\thanks{Corresponding author: Nguyen Van Thin}

\thanks{2000 {\it Mathematics Subject Classification.} Primary 30D45, 32H30.}
\thanks{Key words: Holomorphic mapping, Normal family, Normal mapping, Meromorphic mapping.}

\begin{abstract}
In this paper, we prove some results in normal family of meromorphic mappings intersecting with moving hypersurfaces. As some applications, we establish some results for normal mapping and extension of holomorphic 
mappings. A our result is strongly extended the Montel's normal criterion in the case several variables which is due to Tu in
 [Proc. Amer. Math. Soc. 127, 1039-1049, 1999]. Our results are also strongly extended the results of Tu-Li  in [Sci. China Ser. A.  48, 355-364, 2005] and 
[J. Math. Anal. Appl.  342, 629-638, 2006].
\end{abstract}

\baselineskip=16truept 
\maketitle 
\pagestyle{myheadings}
\markboth{}{}

\section{Introduction and main results}

\def\theequation{1.\arabic{equation}}
\setcounter{equation}{0}

One of the most important developments in complex analysis in the 20th century is the so-called Nevanlinna theory dealing with the value distribution of entire
 and meromorphic functions. In 1933, H. Cartan \cite{CA} generalized the Nevanlinna's result to holomorphic curve in projective space which is now called the
 Nevanlinna-Cartan theory. Since that time, this problem has been studied intensively by many authors. Nevanlinna-Cartan theory has found various applications
 in complex analysis and geometry complex such as uniqueness set theory, normal family theory, extension of holomorphic mapping and the property hyperbolic
 of algebraic variety. In this work, we continue to study the application of Nevanlinna-Cartan theory in normal family, normal mapping and extension of
 holomorphic mappings.

 Let $\mathscr F$ be a family  of meromorphic functions defined on a domain $\mathfrak D$ in the complex plane $\mathbb C.$ $\mathscr F$ is said to be normal 
on $\mathfrak D$ if every sequence functions of $\mathscr F$ has a subsequence which converges uniformly on every compact subset of $\mathfrak D$ with 
respect to the spherical metric to a meromorphic function or identically $\infty$ on $\mathfrak D.$ Perhaps the most celebrated criteria for normality in 
one complex variable is the following Montel-type theorems related to Picard's theorem of value distribution theory.

\begin{theorem}\label{th1}\cite{M1} Let $\mathscr F$ be a family of meromorphic functions on a domain $\mathfrak D$ in the complex plane $\mathbb C.$ 
Suppose that there exist three distinct points $w_1, w_2$ and $w_3$ on the Riemann sphere such that $f(z)-w_i\; (i=1, 2, 3)$ has no zero on $\mathfrak D$ for 
each $f\in F.$ Then $\mathscr F$ is a normal family on $\mathfrak D.$
\end{theorem}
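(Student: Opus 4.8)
The plan is to reduce to a normalized configuration and then use the universal covering of the thrice-punctured sphere. Composing each $f\in\mathscr F$ with a fixed M\"obius transformation $T$ of the Riemann sphere that sends $w_1,w_2,w_3$ to $0,1,\infty$, we may assume that every $f\in\mathscr F$ is a holomorphic map $f:\mathfrak D\to\mathbb C\setminus\{0,1\}$: the hypothesis becomes exactly that $f$ omits $0$, $1$ and $\infty$, and normality is preserved under the post-composition $f\mapsto T^{-1}\circ f$, so it suffices to treat families omitting $0,1,\infty$. As normality is a local property, we fix $z_0\in\mathfrak D$ and a disc $\Delta=\{\,|z-z_0|<r\,\}\subset\mathfrak D$ and show that every sequence in $\mathscr F$ has a subsequence converging uniformly on compact subsets of $\Delta$.

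Let $\pi:\mathbb D\to\mathbb C\setminus\{0,1\}$ be the universal covering map (it exists because $\mathbb C\setminus\{0,1\}$ is hyperbolic; concretely $\pi$ may be taken to be the elliptic modular function after identifying $\mathbb D$ with the upper half-plane). Since $\Delta$ is simply connected, each $f|_\Delta$ lifts to a holomorphic $\widetilde f:\Delta\to\mathbb D$ with $\pi\circ\widetilde f=f$. The family $\{\widetilde f:f\in\mathscr F\}$ consists of holomorphic functions on $\Delta$ bounded by $1$, hence is normal by the elementary bounded case of Montel's theorem (Cauchy's estimates together with the Arzel\`a--Ascoli theorem). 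Thus, given a sequence $f_n\in\mathscr F$, after passing to a subsequence the lifts $\widetilde f_n$ converge uniformly on compacta to a holomorphic $g:\Delta\to\overline{\mathbb D}$, and by the maximum modulus principle either $g(\Delta)\subset\mathbb D$ or $g\equiv c$ with $|c|=1$. In the first case the values $\widetilde f_n(z)$ for $z$ in a given compact set $K\subset\Delta$ eventually lie in a fixed compact subset of $\mathbb D$, on which $\pi$ is uniformly continuous, so $f_n=\pi\circ\widetilde f_n\to\pi\circ g$ uniformly on $K$; since $\pi\circ g$ is holomorphic on $\Delta$, we are done.

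The case $\widetilde f_n\to c\in\partial\mathbb D$ uniformly on compacta is the real difficulty. Passing to a further subsequence we may assume $f_n(z_0)\to a$ in the Riemann sphere, and the claim to prove is that $a$ is one of $0,1,\infty$ and that $f_n\to a$ uniformly on compact subsets of $\Delta$ (so the limit is a meromorphic function or identically $\infty$). The mechanism is the Schwarz--Pick contraction of the Poincar\'e metrics: given a compact $K\subset\Delta$, choose a disc $\Delta'$ with $K\subset\Delta'\subset\subset\Delta$; since each $f_n:\Delta'\to\mathbb C\setminus\{0,1\}$ is distance-decreasing, the hyperbolic diameter of $f_n(K)$ in $\mathbb C\setminus\{0,1\}$ is at most $\operatorname{diam}_{\rho_{\Delta'}}(K)=:M$, uniformly in $n$. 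Now the Poincar\'e metric of $\mathbb C\setminus\{0,1\}$ is complete, and near each puncture it grows like $|d\zeta|/(|\zeta|\log(1/|\zeta|))$ in a local coordinate $\zeta$ vanishing at the puncture; hence any hyperbolic ball of radius $M$ whose centre is close enough to a puncture is contained in a prescribed small neighbourhood of that puncture, while a hyperbolic ball of radius $M$ centred at a point of $\mathbb C\setminus\{0,1\}$ is relatively compact in $\mathbb C\setminus\{0,1\}$. Consequently, if $a$ is a puncture then (using $f_n(K)\subset$ the hyperbolic $M$-ball about $f_n(z_0)$ and $f_n(z_0)\to a$) we get $f_n\to a$ uniformly on $K$; and if $a\in\mathbb C\setminus\{0,1\}$ then $f_n|_K$ eventually lies in a fixed compact subset of $\mathbb C\setminus\{0,1\}$, so $\{f_n\}$ is a locally bounded family of holomorphic functions with values bounded away from $0$ and $1$, and a further subsequence converges locally uniformly to a holomorphic limit. (This second alternative cannot in fact occur, for its limit would be a holomorphic map $\Delta\to\mathbb C\setminus\{0,1\}$ whose $\mathbb D$-valued lift equals $\lim\widetilde f_n$, contradicting $|c|=1$; but this is not needed.) In all cases the selected subsequence converges uniformly on compact subsets of $\Delta$ to a meromorphic function or identically to $\infty$, so $\mathscr F$ is normal. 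One may also dispatch this last case by invoking Schottky's theorem directly: after a M\"obius normalization making $a=0$, the functions $f_n$ omit $0,1,\infty$ with $f_n(z_0)\to0$, and Schottky's estimate then bounds $|f_n|$ uniformly from above on a smaller disc, forcing $f_n\to0$ there.
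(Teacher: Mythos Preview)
Your argument is correct: the M\"obius normalization, the lift through the universal cover $\pi:\mathbb D\to\mathbb C\setminus\{0,1\}$, the bounded--family case of Montel for the lifts, and the Schwarz--Pick analysis of the boundary case are all sound. In particular, your use of the cusp asymptotics $\rho\sim |d\zeta|/(|\zeta|\log(1/|\zeta|))$ to show that a hyperbolic ball of fixed radius centred near a puncture shrinks into that puncture is accurate, and that is exactly what forces $f_n\to a$ uniformly on $K$ when $a\in\{0,1,\infty\}$. Your parenthetical remark that the alternative $a\in\mathbb C\setminus\{0,1\}$ ``cannot in fact occur'' is not quite justified as stated (the lift of the limit need not coincide with $\lim\widetilde f_n$, since deck transformations can move $\widetilde f_n(z_0)$ toward $\partial\mathbb D$ while $f_n(z_0)$ stays bounded), but, as you yourself note, it is unnecessary: in that case local boundedness already yields a convergent subsequence, which is all normality requires.

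As for comparison with the paper: Theorem~\ref{th1} is quoted from Montel \cite{M1} as classical background and is not proved in the paper. The methodology the paper develops for its own results is, however, quite different from yours. In the special case $n=1$, hyperplanes $=$ points, the paper's framework would argue by contradiction via the Aladro--Krantz rescaling lemma (Lemma~\ref{lm5c}): a non-normal sequence produces rescalings $g_p(\xi)=f_p(y_p+r_pu_p\xi)$ converging to a nonconstant entire $g:\mathbb C\to\mathbb P^1(\mathbb C)$, and a Hurwitz argument shows $g$ still omits the three values, contradicting Picard's little theorem (the $n=1$ case of Lemma~\ref{lm5}). So your proof is the classical ``potential-theoretic'' route (uniformization plus hyperbolic geometry), while the paper's machinery is the Zalcman--type route (rescaling plus a Picard-type theorem). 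Your approach is self-contained for the one-variable, three-point case and avoids any value-distribution input; the paper's approach, though heavier here, is what scales to several variables, moving targets, and hypersurfaces.
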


\begin{theorem}\label{th2}\cite{M2} Let $\mathscr F$ be a family of meromorphic functions on a domain $\mathfrak D$ of the complex plane $\mathbb C.$ 
Suppose that there exist distinct points $w_1, w_2, \dots, w_q\; (q\ge 3)$ on the Riemann sphere such that $f(z)-w_i$ has no zero with multiplicities less
 than $m_i \;(i=1, 2, \dots, q)$ on $\mathfrak D$ for each $f\in \mathscr F,$ where $m_i\; (i=1, 2, \dots, q)$ are $q$ fixed positive integers and may be $\infty,$
 with $\sum_{j=1}^{q}\dfrac{1}{m_j}<q-2.$ Then $\mathscr F$ is a normal family on $\mathfrak D.$
\end{theorem}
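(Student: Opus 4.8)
The plan is to argue by contradiction, combining Zalcman's rescaling lemma with the Second Main Theorem of Nevanlinna. Suppose $\mathscr F$ were not normal on $\mathfrak D$. Since normality is a local property, it would fail at some point $z_0\in\mathfrak D$, and after an affine change of coordinates we may assume $z_0=0$ and work on a small disc about the origin. By Zalcman's lemma there would then exist functions $f_k\in\mathscr F$, points $z_k\to 0$, and positive numbers $\rho_k\downarrow 0$ such that the rescaled functions $g_k(\zeta):=f_k(z_k+\rho_k\zeta)$ converge locally uniformly on $\mathbb C$, with respect to the spherical metric, to a \emph{nonconstant} meromorphic function $g$ on $\mathbb C$. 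First I would record that this convergence lets us transfer the multiplicity hypothesis to the limit.

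Precisely, the first key step is to show that for each $j\in\{1,\dots,q\}$ every $w_j$-point of $g$ has multiplicity at least $m_j$ (and $g$ omits $w_j$ when $m_j=\infty$). Since $g$ is nonconstant, $g\not\equiv w_j$; if $\zeta_0$ were a $w_j$-point of $g$ of finite multiplicity $p$ with $1\le p<m_j$, then — replacing $g,g_k$ by $1/(g-w_j),1/(g_k-w_j)$ if $w_j$ is finite, and working with poles if $w_j=\infty$ — Hurwitz's theorem would give, for all large $k$, exactly $p$ zeros of $g_k-w_j$ counted with multiplicity in a small disc around $\zeta_0$. But $\zeta\mapsto z_k+\rho_k\zeta$ is a biholomorphism onto a subset of $\mathfrak D$ for $k$ large and preserves multiplicities, so every zero of $g_k-w_j$ has multiplicity $\ge m_j$; hence that count is $0$ or $\ge m_j>p$, a contradiction. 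Taking $p=0$ in the same argument rules out a $w_j$-point when $m_j=\infty$.

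It then remains to rule out the existence of a nonconstant meromorphic function $g$ on $\mathbb C$ all of whose $w_j$-points have multiplicity $\ge m_j$, under $\sum_{j=1}^q\frac{1}{m_j}<q-2$; this is the heart of the matter. If $g$ is transcendental, I would apply the truncated Second Main Theorem to $g$ with targets $w_1,\dots,w_q$, namely $(q-2)T(r,g)\le\sum_{j=1}^q\overline N\big(r,1/(g-w_j)\big)+S(r,g)$, and combine it with the elementary bound $\overline N\big(r,1/(g-w_j)\big)\le\frac{1}{m_j}N\big(r,1/(g-w_j)\big)+O(1)\le\frac{1}{m_j}T(r,g)+O(1)$ to get $\big(q-2-\sum_j\frac{1}{m_j}\big)T(r,g)\le S(r,g)$, impossible since the coefficient on the left is positive and $S(r,g)=o(T(r,g))$. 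If instead $g$ is rational of degree $d\ge 1$, then $g$ is surjective (so no $m_j$ is $\infty$), and the Riemann–Hurwitz formula gives $2d-2=\sum_p(e_p-1)\ge\sum_{j=1}^q\big(d-|g^{-1}(w_j)|\big)\ge\sum_{j=1}^q d\big(1-\tfrac1{m_j}\big)=d\big(q-\sum_j\tfrac1{m_j}\big)>2d$, again a contradiction. Either way no such $g$ exists, so $\mathscr F$ must be normal.

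The main obstacle — and the only place genuine value-distribution input enters — is this final "no such function" step, which needs the Second Main Theorem with truncated counting functions in the transcendental case and the Riemann–Hurwitz formula in the rational case; the rest is routine once Zalcman's lemma is available. The minor bookkeeping lies in treating the cases $w_j=\infty$ and $m_j=\infty$ uniformly. (One could also reproduce Montel's original argument via the branched uniformization of $\mathbb P^1$ punctured along the $w_j$, but the Zalcman–Nevanlinna route above is cleaner and more self-contained.)
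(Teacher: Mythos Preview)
Your argument is correct. The Zalcman rescaling, the Hurwitz transfer of the multiplicity constraint to the limit $g$, and the dichotomy (truncated Second Main Theorem for transcendental $g$, Riemann--Hurwitz for rational $g$) are all carried out properly; the inequality $\sum_j 1/m_j<q-2$ is exactly what makes both branches collapse.

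However, there is nothing in the paper to compare against: Theorem~\ref{th2} is quoted from Montel~\cite{M2} as classical background and is not proved here. The paper's own contributions (Theorems~\ref{th10}--\ref{th17}) are higher-dimensional analogues, and their proofs do follow the same template you chose --- a Zalcman-type rescaling (Lemma~\ref{lm5c}, the Aladro--Krantz criterion) followed by a Picard-type obstruction (Lemma~\ref{lm5}, Nochka's theorem, in place of the Second Main Theorem). So while there is no ``paper's proof'' of this particular statement, your approach is precisely in the spirit of the methods the paper uses for its main results. One cosmetic remark: in your Hurwitz step the phrase ``replacing $g,g_k$ by $1/(g-w_j),1/(g_k-w_j)$ if $w_j$ is finite'' is unnecessary --- when $w_j$ is finite and $g(\zeta_0)=w_j$, spherical convergence already forces the $g_k$ to be holomorphic near $\zeta_0$, and Hurwitz applies directly to $g_k-w_j$.
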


In the case of higher dimension, the notion of normal family has proved its importance in geometric function theory of several complex variables. Fujimoto
 \cite{F}, Green \cite{MG} and Nochka \cite{EN} established some Picard-type theorems of value distribution theory for holomorphic mappings from 
$\mathbb C^m$ to $\mathbb P^n(\mathbb C),$ related to intersection multiplicity of value distribution theory. By starting from Green's and Nochka's Picard-type
 theorems and using the heuristic principle obtained by Aladro and Krantz \cite{GK}, Tu \cite{T1} extended Theorem \ref{th1} and Theorem \ref{th2} to the
 case of families of holomorphic mappings of a domain $\mathbb D$ in $\mathbb C^m.$ 

Let $A$ be a nonempty open subset of a domain $\mathbb D$ in $\mathbb C^m$ such that $S=\mathbb D\setminus A$ is an analytic set in $\mathbb D.$ Let 
$f: A \to \mathbb P^n(\mathbb C)$ be a holomorphic mapping. Let $U$ be a nonempty connected open subset of $\mathbb D.$ A holomorphic mapping 
$\tilde f$ from $U$ into $\mathbb C^{m+1}$ is said to be a representation of $f$ on $U$ if $f(z)=\rho(\tilde f(z))$ for all $z\in U\cap A-{\tilde f}^{-1}(0),$ 
where $\rho:\mathbb C^{m+1}\to \mathbb P^{n}(\mathbb C)$ is the canonical projection. A holomorphic mapping $f: A\to \mathbb P^{n}(\mathbb C)$ is 
said to be a meromorphic mapping from $\mathbb D$ into $\mathbb P^{n}(\mathbb C)$ if for each $z\in \mathbb D,$ there exists a representation of $f$ on 
some neighborhood of $z$ in $\mathbb D.$

Let $f$ be a meromorphic mapping  of a domain $\mathbb D$ in $\mathbb C^m$ into $\mathbb P^{n}(\mathbb C).$ Then for any 
$a\in \mathbb D,$ $f$ always has an reduced representation $\tilde f(z)=(f_0(z), \dots, f_{n}(z))$ on some neighborhood $U$ of $a$ in $\mathbb D,$ 
which means that each $f_i$ is a holomorphic function on $U$ and that $f(z)=(f_0(z):f_1(z):\dots:f_n(z))$ outside the analytic set 
$I(f):=\{z\in U: f_0(z)=f_1(z)=\dots=f_n(z)=0\}$ of codimension at least $2.$ Obviously a meromorphic mapping from $\mathbb D$ into
 $\mathbb P^{n}(\mathbb C)$ is a holomorphic mapping from $\mathbb D$ into $\mathbb P^{n}(\mathbb C)$ if and only if $I(f)=\emptyset.$ Let $f$ be a 
holomorphic mapping from $\mathbb D$ into $\mathbb P^{n}(\mathbb C)$ and 
 $\tilde{f }$ be a reduced of $f$ on $U.$ Thus $n+1$ meromorphic mappings
$$\tilde f_i=\left(\dfrac{f_0}{f_i},\dots, \dfrac{f_n}{f_i}\right): U\to \overline {\mathbb C}^{n+1},i=0,\dots, n$$
associated a reduced representation $\tilde f$ of $f$  on $U$ do not depend on the representation $\tilde f$, which will be used lately.

\begin{defi}\label{dn2}\cite{GK} Let $\mathbb D$ be a domain in $\mathbb C^m.$ Let $F$ be a family of holomorphic mappings from
 $\mathbb D$ to a compact complex manifold $M$. This family $F$ is said to be a (holomorphically) normal family on $\mathbb D$ if any sequence 
in $F$ contains a subsequence which converges uniformly on compact subsets of $\mathbb D$ to a holomorphic mapping from $\mathbb D$ into $M.$
\end{defi}
\begin{defi}\label{dn3}\cite{F1}
 A sequence $\{f_p\}$ of meromorphic mappings from $\mathbb D$ to $\mathbb P^{n}(\mathbb C)$ is said to converge meromorphically on
 $\mathbb D$ to a meromorphic mapping $f$ if and only if, for any $z\in \mathbb D,$ each $f_p$ has a reduced representation
$\tilde f_p= (f_{0,p},f_{1,p},\dots,f_{n,p})$ on some fixed neighborhood $U$ of $z$ in $\mathbb D$ such that $\{f_{i,p}\}_{p=1}^{\infty}$
 converges uniformly on compact subsets of $U$ to a holomorphic function $f_i\; (0\le i \le n)$ on $U$ with the property that
 $f = (f_0: f_1: \dots : f_n)$ is a representation of $f$ on $U,$ not necessarily a reduced representation.
\end{defi}
\begin{defi}\label{dn4}\cite{F1}
 Let $F$ be a family of meromorphic mappings of $\mathbb D$ into $\mathbb P^{n}(\mathbb C).$ One may call $F$ a meromorphically normal family 
on $\mathbb D$ if any sequence in $F$ has a meromorphically convergent subsequence on $\mathbb D.$
\end{defi}

\begin{defi}\label{dn4a}\cite{F1}
 A sequence $\{f_p\}$ of meromorphic mappings from $\mathbb D$ into $\mathbb P^{n}(\mathbb C)$ is said to be quasi-regular on $\mathbb D$ 
if and only if for any $z\in \mathbb D$ has a neighborhood $U$ with property that $\{f_p\}$ converges compactly on $U$ outside a nowhere dense analytic subset $S$ of $U,$ i.e., for any domain $G\subset U\setminus{S}$ such that closure $\overline G$ of $G$ in $U\setminus{S}$ is a compact subset of $U,$ there is some $p_0$ such that $I(f_p)\cap G=\emptyset$ for all $p\ge p_0$ and $\{f_p|_{G}\}_{p\ge p_0}$ converges uniformly on $G$ to a holomorphic mapping of $G$ into $\mathbb P^{n}(\mathbb C).$
\end{defi}

\begin{defi}\label{dn5a}\cite{F1}
 Let $F$ be a family of meromorphic mappings of $\mathbb D$ into $\mathbb P^{n}(\mathbb C).$  $F$ is said to be quasi-normal family on $\mathbb D$ if any sequence in $F$ has a subsequence so as to be quasi-regular on $\mathbb D.$
\end{defi}
For the detailed discussion about quasi-normal family, see Fujimoto \cite{F1}.

In 2005, Tu and Li extended the result of Tu \cite{T1} for moving target and obtained the result as follows:

\begin{theorem}\label{th4}\cite{TL}
Let $\mathcal F$ be a family of holomorphic mappings of a domain $\mathbb D$ in $\mathbb C^m$ into $\mathbb P^n(\mathbb C)$ and let $H_1, \dots, H_q$ 
be $q\; (q\ge 2n+1)$ moving hyperplanes in $\mathbb P^n(\mathbb C)$ located in pointwise general position such that each $f$ in $F$ intersects $H_j$ 
on $\mathbb D$ with multiplicity at least $m_j\; (j=1, \dots, q),$ where $m_1, \dots, m_q$ are fixed positive integers and may be $\infty,$ with 
$\sum_{j=1}^{q}\dfrac{1}{m_j}<\dfrac{q-n-1}{n}.$ Then $\mathcal F$ is a normal family on $\mathbb D.$
\end{theorem}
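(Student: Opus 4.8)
The plan is to argue by contradiction along the by-now standard route: a Zalcman-type rescaling turns a hypothetical failure of normality into a nonconstant holomorphic map of $\mathbb{C}$ into $\mathbb{P}^n(\mathbb{C})$, the moving targets freeze to fixed hyperplanes in general position, and the limit map then contradicts the Green--Nochka Picard-type theorem, whose numerical hypothesis is exactly $\sum_{j=1}^{q}\frac1{m_j}<\frac{q-n-1}{n}$. Suppose that $\mathcal F$ is not normal at some $z_0\in\mathbb D$. First I would invoke a several-variable Zalcman-type rescaling lemma of the kind established by Tu \cite{T1}: after passing to a subsequence there exist $f_p\in\mathcal F$, points $z_p\to z_0$, positive reals $\rho_p\to 0$, and unit vectors $u_p\to u_0\in\mathbb C^m$ so that the one-variable maps $g_p(\xi):=f_p(z_p+\rho_p\xi u_p)$ converge uniformly on compact subsets of $\mathbb C$ to a \emph{nonconstant} holomorphic map $g:\mathbb C\to\mathbb P^n(\mathbb C)$, while, after normalizing by suitable nonzero constants, the reduced representations $\tilde g_p(\xi):=\tilde f_p(z_p+\rho_p\xi u_p)$ (each $\tilde f_p$ being nowhere zero since $f_p$ is holomorphic) converge locally uniformly to a reduced representation $\tilde g=(g_0,\dots,g_n)$ of $g$. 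Next I would freeze the moving targets: writing $\tilde H_j=(h_{j,0},\dots,h_{j,n})$ for a holomorphic, nowhere-vanishing coefficient vector of $H_j$ on a neighborhood of $z_0$, one has $\tilde H_j(z_p+\rho_p\xi u_p)\to c_j:=\tilde H_j(z_0)\ne 0$ uniformly on compacta, and since the $H_j$ are in \emph{pointwise} general position the $q\ (\ge 2n+1)$ fixed hyperplanes $\widehat H_j:=\{[w]\in\mathbb P^n(\mathbb C):\langle w,c_j\rangle=0\}$ are in ordinary general position in $\mathbb P^n(\mathbb C)$.

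The crucial step is to transfer the intersection hypothesis to $g$. For each $j$ the holomorphic function $\xi\mapsto\langle\tilde g_p(\xi),\tilde H_j(z_p+\rho_p\xi u_p)\rangle$ is the composition of $z\mapsto\langle\tilde f_p(z),\tilde H_j(z)\rangle$ with the affine biholomorphism $\xi\mapsto z_p+\rho_p\xi u_p$ of $\mathbb C$ onto the line $z_p+\mathbb C u_p$; by hypothesis $f_p$ intersects $H_j$ with multiplicity $\ge m_j$, i.e. the zero divisor of $\langle\tilde f_p,\tilde H_j\rangle$ has all its coefficients $\ge m_j$, so the restriction of $\langle\tilde f_p,\tilde H_j\rangle$ to any complex line, and hence the composite above, has all its zeros of order $\ge m_j$ (a zero of such a restriction at a point $z$ has order at least the divisor multiplicity along some local branch through $z$), and affine changes of the variable preserve zero orders. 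Letting $p\to\infty$ and applying Hurwitz's theorem, $\xi\mapsto\langle\tilde g(\xi),c_j\rangle$ either vanishes identically or has all its zeros of order $\ge m_j$; equivalently, $g(\mathbb C)\subset\widehat H_j$, or $g$ intersects $\widehat H_j$ with multiplicity $\ge m_j$ (with $m_j=\infty$ meaning $g$ omits $\widehat H_j$). Finally I would invoke the Picard-type theorem of Green and Nochka in the form used by Tu \cite{T1}: a holomorphic map $\mathbb C\to\mathbb P^n(\mathbb C)$ meeting $q\ge 2n+1$ hyperplanes in general position, each with multiplicity $\ge m_j$ and with $\sum_{j=1}^q\frac1{m_j}<\frac{q-n-1}{n}$, must be constant --- the degenerate cases, in particular $g(\mathbb C)\subset\widehat H_j$ for some $j$, being absorbed by Nochka's weights, or treated by an induction on $n$ (with classical base case $n=1$, the Picard-type statement underlying Theorem \ref{th2}), since on $\widehat H_j\cong\mathbb P^{n-1}(\mathbb C)$ the remaining $q-1\ge 2(n-1)+1$ hyperplanes stay in general position and $\sum_{j'\ne j}\frac1{m_{j'}}<\frac{q-n-1}{n}<\frac{(q-1)-(n-1)-1}{n-1}$. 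This contradicts the nonconstancy of $g$, and so $\mathcal F$ is normal on $\mathbb D$.

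The principal obstacle is the rescaling lemma together with the requirement that the \emph{reduced} representations converge: one needs a several-variable Zalcman-type lemma for $\mathbb P^n(\mathbb C)$-valued holomorphic maps that delivers a nonconstant holomorphic limit $g$ carried by a reduced limiting representation (so that the zeros of $\langle\tilde g,c_j\rangle$, counted with multiplicity, really are the intersections of $g$ with $\widehat H_j$), rather than merely a meromorphically convergent subsequence whose limiting representation might pick up spurious common zeros. The remaining delicate point, that the \emph{moving} nature of the $H_j$ could in principle corrupt the bookkeeping of vanishing orders, is harmless here precisely because the passage to the rescaled variable is an affine biholomorphism, so $\xi\mapsto\langle\tilde g_p(\xi),\tilde H_j(z_p+\rho_p\xi u_p)\rangle$ has exactly the same zero multiplicities as $\langle\tilde f_p,\tilde H_j\rangle$. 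Once these points are in place, freezing the targets and quoting Green--Nochka are routine, and the inequality $\sum_{j=1}^q 1/m_j<(q-n-1)/n$ enters only through that last quotation.
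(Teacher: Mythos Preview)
The paper does not give an independent proof of this statement; Theorem~\ref{th4} is quoted from Tu--Li \cite{TL} as background, and the paper recovers it only as a special case of its own Theorem~\ref{th10} (or Theorem~\ref{th13n}).  Your argument is precisely the template the paper uses to prove those generalizations: assume non-normality, apply the Aladro--Krantz rescaling lemma (Lemma~\ref{lm5c}) to produce a nonconstant limit $g:\mathbb C\to\mathbb P^n(\mathbb C)$, freeze the moving targets at the limit point, transfer the multiplicity condition to $g$ via Hurwitz, and contradict Nochka's Picard-type theorem (Lemma~\ref{lm5}).  So your proposal is correct and matches the paper's method; your treatment of the degenerate case $g(\mathbb C)\subset\widehat H_j$ and of the convergence of reduced representations is in fact more explicit than what the paper writes out.
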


In 2006, Tu and Li proved some results as follows for normal mapping and Big's Picard Theorem:

\begin{theorem}\label{th5}\cite{TL1}
Let $S$ be an analytic subset of a domain $\mathbb D$ in $\mathbb C^m$ with $\dim_{\mathbb C} S\le m-2.$ Let $f$ be a holomorphic mapping from 
$\mathbb D-S$ into $\mathbb P^n(\mathbb C).$ Let $H_1, \dots, H_q$ be $q$ $(\ge 2n+1)$ moving hyperplanes in $\mathbb P^n(\mathbb C)$ 
located in pointwise general position such that $f$ intersects $H_j$ on $\mathbb D-S$ with multiplicity at least $m_j$ $(j=1, \dots, q),$ where $m_1, \dots, m_q$
 are positive integers and may be $\infty,$ with $\sum_{j=1}^{q}\dfrac{1}{m_j}<\dfrac{q-n-1}{n}.$ Then the holomorphic mapping 
$f$  from $\mathbb D-S$ into $\mathbb P^n(\mathbb C)$ extends to a holomorphic mapping from $\mathbb D$ into $\mathbb P^n(\mathbb C).$
\end{theorem}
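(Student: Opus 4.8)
The plan is to use Theorem~\ref{th4} as the ``little Picard'' input of a big-Picard/removable-singularity argument. Conceptually, Theorem~\ref{th4} is a normality statement which, via the heuristic principle of Aladro--Krantz already used by Tu, amounts to hyperbolic embeddedness of the ``total space'' $\mathcal Y\subseteq\mathbb D\times\mathbb P^n(\mathbb C)$ whose fibre over $z$ is $\mathbb P^n(\mathbb C)$ with the (branched, to record the multiplicities $m_j$) boundary along the moving hyperplanes $H_1(z),\dots,H_q(z)$; the graph $z\mapsto(z,f(z))$ sends $\mathbb D-S$ into $\mathcal Y$, and the Kiernan--Kobayashi extension theorem for maps into hyperbolically embedded subspaces extends it across the codimension-$\ge2$ set $S$. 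Since the hyperplanes move and one has multiplicities rather than omitted values, I would instead argue by hand from Theorem~\ref{th4}. The statement is local on $\mathbb D$, and $\mathbb D-S$ is connected, so local extensions glue; moreover it suffices to extend $f$ \emph{continuously} to $\mathbb D$, because a map which is continuous on $\mathbb D$, holomorphic off the analytic set $S$, and has (automatically bounded, as $\mathbb P^n(\mathbb C)$ is compact) affine coordinates near each limit value, is holomorphic by Riemann's theorem. I would then induct on $d=\dim_{\mathbb C}S$: the base case is $d=0$; in the inductive step one restricts $f$ to the $(m-d)$-dimensional affine slices transverse to the top stratum $S_{\mathrm{reg}}=\{z_1=\dots=z_{m-d}=0\}$ (generic slices inheriting pointwise general position and the multiplicities $\ge m_j$), applies the base case on each slice, and reassembles by a Thullen/Remmert--Stein removable-singularity argument; one then checks the extended map still meets each $H_j$ with multiplicity $\ge m_j$ off $S_{\mathrm{sing}}$ (the difference $f^*H_j-m_j(f^*H_j)_{\mathrm{red}}$ is effective off the codimension-$\ge2$ set $S_{\mathrm{reg}}$, hence effective), so that the induction closes.

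So the core is the base case: $S=\{0\}$, $\mathbb D$ a ball about $0$ in some $\mathbb C^m$ ($m\ge2$), and $f:\mathbb D-\{0\}\to\mathbb P^n(\mathbb C)$ meeting moving hyperplanes $H_1,\dots,H_q$ ($q\ge 2n+1$, pointwise general position) with multiplicity $\ge m_j$ and $\sum 1/m_j<(q-n-1)/n$. Here I would rescale toward the puncture: on the fixed spherical shell $\Sigma=\{w\in\mathbb C^m:\tfrac12<|w|<2\}$ put $f_t(w)=f(e^{-t}w)$ for $t$ large, so that $e^{-t}\Sigma\subset\mathbb D$. Each $f_t$ is holomorphic on $\Sigma$ and meets the rescaled moving hyperplanes $H_j^{(t)}(w)=H_j(e^{-t}w)$ — still in pointwise general position, with the same intersection multiplicities since $w\mapsto e^{-t}w$ is biholomorphic — so by Theorem~\ref{th4} the family $\{f_t\}$ is normal on $\Sigma$, and, exhausting $\mathbb C^m-\{0\}$ by shells and diagonalizing, normal on $\mathbb C^m-\{0\}$. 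Any subsequence $t_\nu\to\infty$ thus has $f_{t_\nu}\to g$ locally uniformly, with $g:\mathbb C^m-\{0\}\to\mathbb P^n(\mathbb C)$ holomorphic; since $H_j^{(t_\nu)}\to H_j^{*}:=H_j(0)$ locally uniformly, Hurwitz's theorem applied to the defining sections $\langle\tilde f_{t_\nu},H_j^{(t_\nu)}\rangle\to\langle\tilde g,H_j^{*}\rangle$ shows $g$ meets the \emph{fixed} hyperplanes $H_1^{*},\dots,H_q^{*}$ — in general position, as $0$ is a point of pointwise general position — with multiplicity $\ge m_j$, counting multiplicity $\infty$ if $g$ maps into some $H_j^{*}$.

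Next I would prove that every such limit $g$ is a constant map. Restrict $g$ to a generic complex affine line $\ell\cong\mathbb C$ not through $0$; then $\ell\subset\mathbb C^m-\{0\}$, generic transversality to the hypersurfaces $g^{-1}(H_j^{*})$ preserves the multiplicities (the fully degenerate case $g(\ell)\subset H_j^{*}$ giving multiplicity $\infty$), so $g|_\ell:\mathbb C\to\mathbb P^n(\mathbb C)$ is an entire holomorphic curve meeting the $q\ge 2n+1$ hyperplanes $H_j^{*}$ in general position with multiplicity $\ge m_j$ and $\sum 1/m_j<(q-n-1)/n$. By the Picard-type theorem of Green and Nochka underlying Theorem~\ref{th4} — in the form, valid also for linearly degenerate curves, that no such nonconstant curve exists — $g|_\ell$ is constant; since $g$ is constant on a dense family of affine lines through each point, $g$ is constant.

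It remains to pass from ``every subsequential limit of $\{f_t:t\ge t_0\}$ is constant'' to ``$\lim_{z\to0}f(z)$ exists'', i.e.\ to show that the $\omega$-limit set $\Lambda$ of the family — a connected compact set of constant maps, hence a connected compact subset of $\mathbb P^n(\mathbb C)$ — is a single point; this is where I expect the main difficulty. The natural tool is the argument behind Kwack's extension theorem: in a punctured neighbourhood of $0$ the spheres $\{|z|=r\}$ and the radial segments from $r$ to $r/2$ have Kobayashi length (measured in $\mathcal Y$, through the hyperbolic embeddedness furnished by Theorem~\ref{th4}) tending to $0$ as $r\to0$, so $f$ has oscillation tending to $0$ on the thin shells $\{r/2\le|z|\le r\}$, which forces the cluster value to be unique. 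Granting this, $\lim_{z\to0}f(z)$ exists, $f$ extends continuously to $\mathbb D$, and Riemann's theorem makes the extension holomorphic, finishing the base case and, with the induction of the first paragraph, the theorem. Apart from the uniqueness of the cluster value, the points needing care are the bookkeeping of pointwise general position and of the bounds $\ge m_j$ under rescaling, under restriction to generic slices, and in limits, together with the Remmert--Stein reassembly in the inductive step — routine but essential.
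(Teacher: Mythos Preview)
Your outline is workable, but it takes a substantially longer route than the paper (and than the original source \cite{TL1}). The paper's argument, carried out for the more general Theorem~\ref{th12} (which specialises to Theorem~\ref{th5} when the $D_j$ are hyperplanes, so $m_{\mathcal D}=1$ and $n_{\mathcal D}=n$), bypasses the rescaled family $\{f_t\}$, the induction on $\dim S$, and the Kwack-type cluster-value analysis entirely.

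The concept you are not using is that of a \emph{normal holomorphic mapping} (Definition~\ref{normal}): a single map $f:\Omega\to\mathbb P^n(\mathbb C)$ with $f^*ds^2_{\mathrm{FS}}\le c\,K_\Omega^2$ for some constant $c$. The paper first shows (Lemma~\ref{lmth12}, via the Zalcman-type reparametrisation Lemma~\ref{nf} together with Nochka's theorem, Lemma~\ref{lm5}) that under the hypotheses of Theorem~\ref{th5} the map $f$ is normal on $U-S$ for any bounded $U$ with $\overline U\subset\mathbb D$. It then invokes the single fact (Lemma~\ref{lmth12a}, from Noguchi--Ochiai \cite{NO}) that removing an analytic set of codimension $\ge 2$ does not change the infinitesimal Kobayashi metric: $K_{U-S}=K_U$ on $U-S$. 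Combining the two,
\[
d_{\mathbb P^n(\mathbb C)}\bigl(f(z),f(w)\bigr)\le c\,d^K_{U-S}(z,w)=c\,d^K_U(z,w),
\]
so for any $z_0\in S$ and any sequence $z_i\to z_0$ in $U-S$ the images $f(z_i)$ form a Cauchy sequence; $f$ extends continuously, and Riemann's theorem finishes.

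That two-line endgame is exactly the step you flagged as ``the main difficulty'': the uniqueness of the cluster value. Your proposed Kwack-type oscillation estimate on thin shells, once made precise, amounts to reproving that $f$ is distance-decreasing for $d^K_U$ --- i.e.\ the normal-mapping bound above; your rescaled family and the constancy of its limits re-derive Lemma~\ref{nf} plus Lemma~\ref{lm5} by hand; and your induction on $\dim S$ with Remmert--Stein reassembly is unnecessary because $K_{U-S}=K_U$ holds at once for every $S$ of codimension $\ge 2$. Nothing in your plan is wrong, but it unpacks by hand the two black boxes (normal mapping, Kobayashi invariance under codimension-$\ge 2$ removal) that reduce the paper's proof to a paragraph.
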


\begin{theorem}\label{th6}\cite{TL1}
Let $S$ be an analytic subset of a domain $\mathbb D$ in $\mathbb C^m$ with codimension one, whose singularities are normal crossings. Let 
$f$ be a holomorphic mapping from $\mathbb D-S$ into $\mathbb P^n(\mathbb C).$  Let $H_1, \dots, H_q$ be $q$ $(\ge 2n+1)$ moving hyperplanes in
 $\mathbb P^n(\mathbb C)$ located in pointwise general position such that $f$ intersects $H_j$ on $\mathbb D-S$ with multiplicity at least
 $m_j$ $(j=1, \dots, q),$ where $m_1, \dots, m_q$ are positive integers and may be $\infty,$ with $\sum_{j=1}^{q}\dfrac{1}{m_j}<\dfrac{q-n-1}{n}.$ 
Then the holomorphic mapping $f$  from $\mathbb D-S$ into $\mathbb P^n(\mathbb C)$ extends to a holomorphic mapping from $\mathbb D$ into
 $\mathbb P^n(\mathbb C).$
\end{theorem}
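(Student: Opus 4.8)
The plan is to reduce the assertion to a local Big Picard statement across a smooth divisor, and to settle that by exhibiting, near the puncture, a normal family to which our normality criterion for meromorphic mappings intersecting moving hyperplanes applies (in the hyperplane case this sharpens Theorem~\ref{th4}), and then upgrading normality to genuine extension by a Kwack--Lindel\"of type argument based on the Green--Nochka theorem. \emph{Step 1 (localization).} Since the conclusion is local and $S$ has only normal crossing singularities, fix $p\in S$ and holomorphic coordinates on a polydisc $\Delta^m\subset\mathbb D$ centred at $p$ with $S\cap\Delta^m=\{z_1\cdots z_k=0\}$, so that $\Delta^m\setminus S=(\Delta^*)^k\times\Delta^{m-k}$; we must extend $f$ holomorphically to $\Delta^m$. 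It suffices to treat the case $k=1$, i.e.\ to extend $f\colon\Delta^*\times\Delta^N\to\mathbb P^n(\mathbb C)$ ($N=m-1$) across $\{0\}\times\Delta^N$; the general $k$ is handled by the same device applied simultaneously to the cusps of all branches $\{z_i=0\}$, which delivers continuity of the extension along every stratum of $\{z_1\cdots z_k=0\}$. (Alternatively, for $k\ge 2$ one may first extend across the smooth locus $S_{\mathrm{reg}}$ and then remove the codimension-$\ge 2$ set $S_{\mathrm{sing}}$ by Theorem~\ref{th5}.)

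\emph{Step 2 (a normal family at the puncture).} Let $\pi\colon\mathbb H\to\Delta^*$, $\pi(\zeta)=e^{2\pi i\zeta}$, be the universal covering of $\Delta^*$ by the upper half plane, and put $\tilde f=f\circ(\pi\times\mathrm{id})\colon\mathbb H\times\Delta^N\to\mathbb P^n(\mathbb C)$, which is invariant under $\zeta\mapsto\zeta+1$. For $t>0$ set $f_t(\zeta,w)=\tilde f(\zeta+it,w)$; then $f_t$ is holomorphic on $\{\mathrm{Im}\,\zeta>-t\}\times\Delta^N$, and these domains exhaust $\mathbb C\times\Delta^N$ as $t\to\infty$. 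Because $\pi$ and the translations are holomorphic, $f_t$ intersects the pulled-back moving hyperplanes $H_j^{(t)}(\zeta,w)=H_j(\pi(\zeta+it),w)$ --- again $q\ (\ge 2n+1)$ of them, in pointwise general position --- with multiplicity $\ge m_j$, and $\sum_{j=1}^q 1/m_j<(q-n-1)/n$. Hence, on every relatively compact subset of $\mathbb C\times\Delta^N$ and for $t$ large, our normality criterion (whose hypotheses allow the moving hyperplanes to vary with the member of the family, or, along a sequence $t_\nu\to\infty$, Theorem~\ref{th4} applied after noting that $H_j^{(t_\nu)}\to H_j(0,w)$ locally uniformly since $\pi(\zeta+it_\nu)\to 0$) shows that $\{f_t\}_{t>0}$ is a normal family. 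A diagonal argument over an exhaustion then yields: every sequence $t_\nu\to\infty$ has a subsequence along which $f_{t_\nu}$ converges locally uniformly to a holomorphic $g\colon\mathbb C\times\Delta^N\to\mathbb P^n(\mathbb C)$, invariant under $\zeta\mapsto\zeta+1$, which intersects the fixed-in-$\zeta$ hyperplanes $H_j(0,w)$ with multiplicity $\ge m_j$ (the multiplicities passing to the limit by a Hurwitz argument).

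\emph{Step 3 (degeneracy of the limits; extension).} Being $1$-periodic, $g$ descends through $\pi$ to $\bar g\colon\mathbb C^*\times\Delta^N\to\mathbb P^n(\mathbb C)$. For fixed $w\in\Delta^N$, $\bar g(\cdot,w)\circ\exp\colon\mathbb C\to\mathbb P^n(\mathbb C)$ is an entire curve meeting the $q\ (\ge 2n+1)$ \emph{fixed} hyperplanes $H_1(0,w),\dots,H_q(0,w)$, in general position, with multiplicities $\ge m_j$ and $\sum_{j=1}^q 1/m_j<(q-n-1)/n$; by the Green--Nochka theorem (\cite{MG},\cite{EN}) it is constant, so $\bar g(\lambda,w)=c(w)$ depends on $w$ only. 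Translating back, this says that $f(\{|z_1|=r\}\times\{w\})$ has diameter tending to $0$ as $r\to 0$, with every subsequential limit a single point $c(w)$. Since the circles $\{|z_1|=r\}$ have Poincar\'e length tending to $0$ in $\Delta^*$, the equicontinuity obtained in Step 2 together with this degeneracy forces --- by a Kwack--Lindel\"of type argument, i.e.\ a maximum-principle estimate on the annuli intervening between two such circles --- the limit $c(w)$ to be independent of $r$ and of the chosen sequence; hence $\lim_{z_1\to 0}f(z_1,w)=c(w)$ exists, locally uniformly in $w$. Therefore $f$ extends continuously across $\{0\}\times\Delta^N$, and, being holomorphic off a hypersurface, it extends holomorphically there by the Riemann extension theorem. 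Together with Step 1 this proves the theorem.

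\emph{The main obstacle.} The delicate point is the last implication of Step 3: a normal holomorphic mapping of $\Delta^*$ into $\mathbb P^n(\mathbb C)$ need not extend across the puncture, so normality by itself is insufficient --- one must combine the equicontinuity furnished by the normality criterion with the degeneracy of the cluster maps (coming from Green--Nochka) inside a maximum-principle argument on shrinking annuli, which is exactly the mechanism by which hyperbolic embeddedness enters the classical Big Picard theorem. A second, more technical, point is to verify that pointwise general position of the moving hyperplanes and the intersection multiplicities really are preserved under pull-back by $\pi$, under the translations, and in the limit $t\to\infty$ --- including on $S$ itself --- which is what legitimises applying the normality criterion to $\{f_t\}$ and identifying the limiting configuration $H_j(0,w)$.
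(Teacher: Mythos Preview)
Your approach is correct in outline but takes a substantially different route from the paper. The paper does not prove Theorem~\ref{th6} directly (it is quoted from \cite{TL1}); however, its proof of the generalization Theorem~\ref{th12a} specializes immediately to one. That argument has only two steps: first show, via the Zalcman--type Lemma~\ref{nf} together with Nochka's Picard theorem (Lemma~\ref{lm5}), that $f$ is a \emph{normal holomorphic mapping} in the sense of Definition~\ref{normal} on every bounded $U$ with $\overline U\subset\mathbb D$ (this is exactly Lemma~\ref{lmth12} specialized to hyperplanes); second, invoke Theorem~2.3 of Joseph--Kwack \cite{JK}, which states precisely that a normal holomorphic mapping of $U\setminus S$ into $\mathbb P^n(\mathbb C)$, with $S$ a normal--crossing divisor, extends holomorphically across $S$. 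Your Steps~2--3 instead unwind this black box by hand: you lift to the universal cover, manufacture the family $\{f_t\}$, apply a normality criterion to it, show via Nochka that every subsequential limit is constant in the $\zeta$--variable, and then run the Kwack maximum--principle argument explicitly. Both routes ultimately rest on Nochka's theorem; the paper's is much shorter because the entire extension step is delegated to \cite{JK}, whereas yours has the virtue of being self--contained.

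One correction to your ``main obstacle'' paragraph: the claim that a normal holomorphic mapping of $\Delta^*$ into $\mathbb P^n(\mathbb C)$ need not extend across the puncture is false --- it \emph{does} extend, and this is exactly the content of the Joseph--Kwack theorem the paper relies on. What you presumably mean is that mere normality of the \emph{family} $\{f_t\}$ (equicontinuity) does not by itself force extension without the additional input that the limits are constant in $\zeta$; that weaker statement is correct and is indeed the gap your Step~3 fills. A minor redundancy: in Step~3 there is no need to descend to $\mathbb C^*$ and compose with $\exp$, since for fixed $w$ the limit $g(\cdot,w)\colon\mathbb C\to\mathbb P^n(\mathbb C)$ is already an entire curve to which Lemma~\ref{lm5} applies directly.
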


In 2015, Dethloff-Thai-Trang \cite{DTT} generalized the result of Tu-Li \cite{TL} for meromorphic mapping intersecting a class hypersurfaces in weak general 
position. Detail, they consider the class hypersurfaces with contruction as follows:
Let $H_{\mathbb D}$ be the ring of holomorphic functions on $\mathbb D$ and let $H_{\mathbb D}[x_0, \dots, x_n]$ be the set of homogeneous polynomials of variables ${\bf x}=(x_0, \dots, x_n)$ over $H_{\mathbb D}$. Take $Q\in H_{\mathbb D}[x_0, \dots, x_n]\setminus\{0\}$ with $\deg(Q)=d$ and write
$$Q({\bf x})=Q(x_0, \dots, x_n) = \sum\limits_{k=0}^{n_d}a_k {\bf x}^{I_k}= \sum\limits_{k=0}^{n_d}a_k x_0^{i_{k0}}\dots x_n^{i_{kn}},$$
where $a_k\in H_{\mathbb D}$, $I_k=(i_{k0},\dots ,i_{kn})$ with $|I_k|=i_{k0}+\dots +i_{kn} =d$ for $k=0,\dots,n_d$ and $n_d= \binom{n+d}{n}-1$. Thus each $z\in \mathbb D$ corresponds to an element $Q_z$ in $\mathbb C[x_0, \dots, x_n]$ defined by
$$Q_z({\bf x}) = \sum\limits_{k=0}^{n_d}a_k(z) {\bf x}^{I_k},$$
and hence a hypersurface
\begin{equation*}
    D_z:= \{{\bf x} \in \mathbb C^{n+1}: Q_z({\bf x}) = 0\}
\end{equation*}
is associated to $z$ or $Q_z$. The correspondence $D$ from $\mathbb D$ into hypersurfaces is called a {\it moving hypersurface} in $\mathbb P^{n}(\mathbb C)$ defined by $Q$. Let $\ab= (a_0,\dots,a_{n_d})$ be the
vector function associated to $D$ (or $Q$). In this paper, we only consider homogeneous polynomials $Q$ over $H_{\mathbb D}[x_0, \dots, x_n]$  such that
 coefficients of $Q$ have no common zeros, so that the hypersurface $D_z$ is well defined for each $z\in \mathbb D$.

 Let $P_0, \dots, P_n$ be $n+1$ moving homogeneous polynomials of common degree in $H_{\mathbb D}[x_0, \dots, x_n].$ Denote by $S(\{P_i\}_{i=0}^{n})$ the set of all homogeneous not identically zero polynomials $Q=\sum_{i=0}^{n}b_iP_i, b_i\in H_{\mathbb D}.$ Let $T_0, \dots, T_n$ be moving hypersurfaces in $\mathbb P^n(\mathbb C)$ with common degree, where $T_i$ is defined by the (not identically zero) polynomial $P_i\; (0\le i \le N).$ Denote by $\overset{\sim}S(\{T_i\}_{i=0}^{n})$ the set of all moving hypersurfaces in $\mathbb P^n(\mathbb C)$ which are defined by $Q\in S(\{P_i\}_{i=0}^{n}).$ 

Let $\{Q_j=\sum_{i=0}^{n}b_{ij}P_i\}_{j=1}^{q}$ be $q\;(q\ge n+1)$ homogeneous polynomials in $S(\{P_i\}_{i=0}^{n}).$ We say that $\{Q_j\}_{j=1}^{q}$ are located pointwise in general position in $S(\{P_i\}_{i=0}^{n})$ if 
$$ \text{det}(b_{ij_k})_{0\le k,i\le n}\ne 0 $$ 
for all $1\le j_0<\dots<j_n\le q$ and all $z\in \mathbb D.$

For homogeneous coordinate $x=(x_0:\dots:x_n)$ in $\mathbb P^n(\mathbb C),$ we denote ${\bf x}=(x_0,\dots,x_n).$ Before state the results of Dethloff-Thai-Trang, we need some definitions as follows:
\begin{defi}\label{dn1}
Let $D_j$ be moving hypersurfaces in $\mathbb P^n(\mathbb C)$ of degree $d_j$ which is defined by polynomial homogeneous $Q_j\in H_{\mathbb D}[x_0,\dots,x_n],j=1,\dots,q.$ We say that moving hypersurfaces $\{D_j\}_{j=1}^{q}$ $ (q \ge n+1)$ in $\mathbb P^n(\mathbb C)$ are located in (weakly) general position if there exists $z\in \mathbb D$ such that, for any $1\le j_0 < \dots < j_n \le q,$ the system of equations
$$ \begin{cases}
Q_{j_i}(z)(x_0, \dots, x_n)=0\\
\hspace{1cm}0\le j\le n
\end{cases}$$
has only the trivial solution ${\bf x}= (0, \dots, 0)$ in $\mathbb C^{n+1}.$
\end{defi}
This is equivalent to
\begin{align*}
D(Q_1, \dots, Q_q)(z)=\prod_{1\le j_0<\dots<j_n\le q}\inf_{||{\bf x}||=1}(|Q_{j_0}(z)({\bf x})|^2+\dots+|Q_{j_n}(z)({\bf x})|^2)>0,
\end{align*}
where ${\bf x}=(x_0, \dots, x_n),$ $Q_j(z)({\bf x}) = \sum_{|I|=d_j}a_{jI}(z){\bf x}^{I}$ and $||{\bf x}||=(\sum_{j=0}^{n}|x_j|^{2})^{1/2}.$

Let $f$ be a meromorphic mapping from a domain $\mathbb D$ in $\mathbb C^m$ into $\mathbb P^n(\mathbb C).$ We denote $D_f$ by the hypersurface in $\mathbb P^{n}(\mathbb C)$ depending on $f,$ and $Q_f$ by the homogeneous polynomial depending on $f,$ where $Q_f$ is homogeneous polynomial defining by $D_f.$ With that notation, Dethloff-Thai-Trang proved the results as follows:
\begin{theorem}\label{th8}\cite{DTT}
Let $\mathcal F$ be a family of holomorphic mappings of a domain $\mathbb D$ in $\mathbb C^m$ into $\mathbb P^n(\mathbb C).$ Let $q\ge 2n+1$ be 
a positive integer. Let $m_1, \dots, m_q$ be positive integers or $\infty$ such that
$$ \sum_{j=1}^{q}\dfrac{1}{m_j}<\dfrac{q-n-1}{n}.$$
Suppose that for each $f\in F,$ there exist $n+1$ moving hypersurfaces $T_{0,f},$ $ \dots, T_{n,f}$  in $\mathbb P^n(\mathbb C)$ of common degree 
and that there exist $q$ moving hypersurfaces $D_{1,f},$ $ \dots, D_{q,f}$ in $\overset{\sim}S(\{T_{i,f}\}_{i=0}^{n})$ such that the following 
conditions are satisfied:

\noindent $(i)$ For each $0\le i \le n,$ the coefficients of the homogeneous polynomials $P_{i,f}$ which define the $T_{i,f}$ are bounded above 
uniformly for all $f$ in $F$ on compact subsets of $\mathbb D$ for all $1\le j\le q,$ the coefficients $b_{ij}(f)$ of the linear combinations of the
 $P_{i,f}, i = 0, \dots, n,$ which define the homogeneous polynomials $Q_{j,f}$  are bounded above uniformly for
all $f$ in $\mathcal F$ on compact subsets of $\mathbb D,$ where $Q_{j,f} \in S(\{P_{i,f}\}_{i=0}^{n})$ is a homogeneous polynomial defined the $D_{j,f},$ 
and for any fixed $z\in \mathbb D,$
$$ \inf_{f\in F}D(Q_{1,f}, \dots, Q_{q,f})(z) > 0.$$

\noindent $(ii)$ Assume that $f$ intersects $D_{j,f}$ with multiplicity at least $m_j$ for each $1\le j\le q,$ for all $f\in \mathcal F.$

Then $\mathcal F$ is a holomorphically normal family on $\mathbb D$.
\end{theorem}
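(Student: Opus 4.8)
The plan is to argue by contradiction, combining a rescaling (reparametrization) argument with the observation that the linear-combination structure $Q_{j,f}=\sum_{i=0}^{n}b_{ij}(f)P_{i,f}$ converts the moving hypersurfaces $D_{j,f}$ into \emph{fixed hyperplanes}, so that the Picard-type theorems of Fujimoto, Green and Nochka \cite{F,MG,EN} become available. Assume $\mathcal F$ is not normal at some $z_0\in\mathbb D$. Applying the rescaling lemma for holomorphic mappings into $\mathbb P^n(\mathbb C)$ used by Tu \cite{T1,GK}, one finds (after passing to a subsequence) $f_p\in\mathcal F$, points $z_p\to z_0$ and numbers $\rho_p\downarrow0$ such that $g_p(\zeta):=f_p(z_p+\rho_p\zeta)$ converges uniformly on compact subsets of $\mathbb C^m$ to a \emph{nonconstant} holomorphic mapping $g\colon\mathbb C^m\to\mathbb P^n(\mathbb C)$. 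By hypothesis $(i)$ the coefficients of the $P_{i,f_p}$ and the functions $b_{ij}(f_p)$ are holomorphic and uniformly bounded on compacta, so by Montel a further subsequence makes them converge locally uniformly; since $\rho_p\to0$, the rescaled polynomials $P_{i,f_p}(z_p+\rho_p\zeta)$ and $b_{ij}(f_p)(z_p+\rho_p\zeta)$ then converge, uniformly on compacta of $\mathbb C^m$, to polynomials $P_i\in\mathbb C[x_0,\dots,x_n]$ and constants $b_{ij}\in\mathbb C$ independent of $\zeta$, and $Q_j:=\sum_{i=0}^n b_{ij}P_i$ satisfies $D(Q_1,\dots,Q_q)(z_0)>0$ by the uniform positivity in $(i)$. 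In particular any $n+1$ of the $\{Q_j=0\}$ meet only at the origin, and since $Q_j$ vanishes wherever all the $P_i$ do, the $P_i$ have no common zero in $\mathbb C^{n+1}\setminus\{0\}$.

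Next I would pass to the hyperplane picture via the morphism $\Psi\colon\mathbb P^n(\mathbb C)\to\mathbb P^n(\mathbb C)$, $[{\bf x}]\mapsto[P_0({\bf x}):\dots:P_n({\bf x})]$, which is everywhere defined because the $P_i$ have no common zero. A key point is that $\Psi$ is a \emph{finite} morphism: fix any point of the target and, after a linear change of target coordinates (which only replaces the $P_i$ by linear combinations, preserving the hypotheses), assume it is $[1:0:\dots:0]$; its fibre is then $\{P_1=\dots=P_n=0\}$, on which $P_0$ has no zero, so if this set contained a positive-dimensional irreducible subvariety $Z$, then $P_0|_Z$ would trivialize the ample line bundle $\mathcal O_Z(d)$, which is impossible. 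Hence all fibres of $\Psi$ are finite, so $\hat g:=\Psi\circ g\colon\mathbb C^m\to\mathbb P^n(\mathbb C)$ is holomorphic and \emph{nonconstant} (else $g(\mathbb C^m)$ would lie in a finite set). Under $\Psi$ the hypersurface $\{Q_j=0\}$ is the preimage of the hyperplane $\hat H_j\colon\sum_{i=0}^n b_{ij}y_i=0$, and $\hat H_1,\dots,\hat H_q$ are in general position: if $\det(b_{ij_k})_{0\le k,i\le n}=0$ for some $j_0<\dots<j_n$, then $\sum_{k}c_kQ_{j_k}=0$ for suitable $c$ with, say, $c_0\ne0$, so $Q_{j_0}$ would vanish on the nonempty common zero locus of $\{Q_{j_1}=0\},\dots,\{Q_{j_n}=0\}$ in $\mathbb P^n(\mathbb C)$, contradicting $D(Q_1,\dots,Q_q)(z_0)>0$.

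It then remains to transfer the multiplicity hypothesis to $\hat g$. For each $p$ the auxiliary representation $\bigl(P_{0,f_p}(z_p+\rho_p\zeta)(\tilde g_p(\zeta)),\dots,P_{n,f_p}(z_p+\rho_p\zeta)(\tilde g_p(\zeta))\bigr)$ of the auxiliary map $\Psi_{f_p}\circ g_p$ is reduced --- a common zero of all the components would force $g_p(\zeta)\in\bigcap_{j=1}^{q}D_{j,f_p}(z_p+\rho_p\zeta)=\emptyset$ --- and, since $\sum_{i=0}^n b_{ij}(f_p)(z_p+\rho_p\zeta)P_{i,f_p}(z_p+\rho_p\zeta)(\tilde g_p)=Q_{j,f_p}(z_p+\rho_p\zeta)(\tilde g_p)$, this auxiliary map inherits from $f_p$ the property of meeting the corresponding rescaled moving hyperplane with multiplicity at least $m_j$. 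Letting $p\to\infty$, a Hurwitz-type argument (using convergence of the reduced representations in Fujimoto's sense \cite{F1} together with the convergence of the coefficients above) shows that for each $j$ either $\hat g$ meets $\hat H_j$ with multiplicity at least $m_j$ or $\hat g(\mathbb C^m)\subset\hat H_j$; the second alternative is reduced to the first by restricting to the smallest linear subspace containing $\hat g(\mathbb C^m)$ and invoking Nochka's theory of hyperplanes in subgeneral position, for which the conditions $q\ge2n+1$ and $\sum_{j=1}^q 1/m_j<(q-n-1)/n$ remain adequate. The Picard-type theorem of \cite{F,MG,EN} then forces $\hat g$ to be constant, contradicting the preceding paragraph; hence $\mathcal F$ is normal on $\mathbb D$.

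I expect the principal difficulty to lie in this last step: controlling the intersection multiplicities through the limit $p\to\infty$ --- which requires keeping the limit of reduced representations reduced (Fujimoto's machinery of meromorphic/compact convergence) and, above all, disposing of the linearly degenerate case in which $\hat g$ maps into one or more of the $\hat H_j$, precisely the situation for which Nochka's weights were designed and which is what makes $\sum_j 1/m_j<(q-n-1)/n$ the right bound. The finiteness of $\Psi$ is a brief but essential input, since it is what allows a Picard theorem about \emph{hyperplanes} to deliver constancy of the original bubble $g$; and the production of a nonconstant holomorphic bubble in several variables relies on the rescaling lemma of \cite{T1}.
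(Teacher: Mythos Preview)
The paper does not prove this theorem itself---it is quoted from \cite{DTT}---so there is no ``paper's own proof'' to compare against directly. That said, your strategy is essentially the one the paper uses for the closely related results it \emph{does} prove (Theorem~\ref{th13n} and Lemma~\ref{lmth13}): assume non-normality, apply the Aladro--Krantz rescaling lemma (Lemma~\ref{lm5c}) to produce a nonconstant bubble $g$, push $g$ through the morphism $\Psi=[P_0:\dots:P_n]$, verify the resulting hyperplanes $\hat H_j$ are in general position, transfer the multiplicity hypothesis via Hurwitz, and finish with Nochka's Picard-type theorem (Lemma~\ref{lm5}).

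Two small remarks. First, the rescaling lemma as stated in the paper (Lemma~\ref{lm5c}) yields $g_p(\xi)=f_p(y_p+r_pu_p\xi)$ with $\xi\in\mathbb C$, i.e.\ a bubble from $\mathbb C$, not from $\mathbb C^m$; this is harmless for your argument but worth correcting. Second, where you supply an ad hoc finiteness argument for $\Psi$ and an explicit treatment of the linearly degenerate case via Nochka's subgeneral-position weights, the paper simply packages these steps into the black boxes Lemma~\ref{lm6} (constancy of $F(f)$ forces constancy of $f$) and Lemma~\ref{lm7} (the positivity of $D(Q_1,\dots,Q_q)$ forces both the $P_i$ and the coefficient hyperplanes into general position), both cited from \cite{DTT}. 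Your direct arguments are fine substitutes; the overall architecture is the same.
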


\begin{theorem}\label{th9}\cite{DTT}
Let $\mathcal F$ be a family of meromorphic mappings of a domain $\mathbb D$ of $\mathbb C^m$ into $\mathbb P^{n}(\mathbb C).$ Let
 $q\ge 2n+1$ be a positive integer. Suppose that for each $f\in \mathcal F,$ there exist $n+1$ moving hypersurfaces 
$T_{0,f}, \dots, T_{n,f}$  in $\mathbb P^n(\mathbb C)$ of common degree and that there exist $q$ moving hypersurfaces 
$D_{1,f}, \dots, D_{q,f}$ in $\overset{\sim}S(\{T_{i,f}\}_{i=0}^{n})$ such that the following conditions are satisfied:

\noindent $(i)$ For each $0\le i \le n,$ the coefficients of the homogeneous polynomials $P_{i,f}$ which define the $T_{i,f}$ are bounded above uniformly for 
all $f$ in $\mathcal F$ on compact subsets of $\mathbb D$ for all $1\le j\le q,$ the coefficients $b_{ij}(f)$ of the linear combinations of the
 $P_{i,f}, i = 0, \dots, n,$ which define the homogeneous polynomials $Q_{j,f}$  are bounded above uniformly for
all $f$ in $\mathcal F$ on compact subsets of $\mathbb D,$ where $Q_{j,f} \in S(\{P_{i,f}\}_{i=0}^{n})$ is a homogeneous polynomial defined the 
$D_{j,f},$ and for any $\{f_p\}\subset \mathcal F,$ there exists $z\in \mathbb D$ (which may depend on sequence) such that
$$ \inf_{p\in \mathbb N}D(Q_{1,f_p}, \dots, Q_{q,f_p})(z) > 0.$$

\noindent $(ii)$ For any fixed compact $K$ of $\mathbb D,$ the $2(m-1)$-dimensional Lebesgue areas of $f^{-1}(D_{k,f})\cap K$ $(1\le k\le n+1)$ 
counting multiplicities are bounded above for all $f\in \mathcal F$ (in particular, $f(\mathbb D)\not\subset D_{k,f} (1\le k\le n+1)).$

\noindent $(iii)$ There exists a closed subset $S$ of $\mathbb D$ with $\Lambda^{2m-1}(S) = 0$ such that for any fixed compact subset 
$K$ of $\mathbb D-S,$ the $2(m-1)$-dimensional Lebesgue areas of
$$  \{z\in supp \nu_{Q_{k,f}(\tilde f)}: \nu_{Q_{k,f}(\tilde f)}(z)<m_k\}\cap K \; (n+2\le k\le q)$$
ignoring multiplicities for all $f\in \mathcal F$ are bounded above, where $\{m_k\}_{k=n+2}^{q}$ are fixed positive integers or $\infty$ with
$$ \sum_{k=n+2}^{q}\dfrac{1}{m_k}<\dfrac{q-n-1}{n}.$$

Then $\mathcal F$ is a meromorphically normal family on $\mathbb D.$
\end{theorem}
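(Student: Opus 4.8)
The plan is to reduce to a local statement, to extract from hypothesis (i) a single system of limit moving hypersurfaces that are in general position off a proper analytic set, to bound the Fubini--Study characteristic of the maps in the sequence by a second main theorem fed by hypotheses (ii) and (iii), to pass by a Bishop-type compactness argument to a subsequence converging meromorphically off a thin set, and finally to extend the limit across that set. In detail: first I would use that meromorphic normality is a local property, so after shrinking it suffices to treat a polydisc neighbourhood of each point; fix such a polydisc, still denoted $\mathbb D$, and a sequence $\{f_p\}\subset\mathcal F$. By hypothesis (i) the holomorphic coefficients of the $P_{i,f_p}$ $(0\le i\le n)$ and the coefficients $b_{ij}(f_p)$ of $Q_{j,f_p}=\sum_i b_{ij}(f_p)P_{i,f_p}$ are locally uniformly bounded, so by the classical Montel theorem I pass to a subsequence along which all of them converge locally uniformly, obtaining limit homogeneous polynomials $\widehat P_i$ and $\widehat Q_j=\sum_i\widehat b_{ij}\widehat P_i$. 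Since $D(Q_1,\dots,Q_q)(z)$ depends continuously on the coefficient functions, at the good point $z_0$ of (i) one has $D(\widehat Q_1,\dots,\widehat Q_q)(z_0)\ge\inf_p D(Q_{1,f_p},\dots,Q_{q,f_p})(z_0)>0$; hence no $\widehat Q_j$ vanishes identically and the limit moving hypersurfaces $\widehat D_j$ are in general position at $z_0$. I then let $\Sigma$ be the analytic subset of $\mathbb D$ on which some $(n+1)$-subsystem of the $\widehat Q_j$ has a nontrivial common zero (a finite union of resultant loci); since $z_0\notin\Sigma$ this is a proper analytic subset, on $\mathbb D\setminus\Sigma$ the $\widehat D_j$ are in pointwise general position, and for every ball $B'\Subset\mathbb D\setminus(\Sigma\cup S)$ there is $c>0$ with $D(Q_{1,f_p},\dots,Q_{q,f_p})\ge c$ on $\overline{B'}$ for all large $p$.

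The core step is to show $\sup_p T_{f_p}(B',r)<\infty$ for each such $B'$ and a fixed radius $r$. For this I would invoke the second main theorem for moving hypersurfaces in pointwise general position --- obtained by composing with the Veronese-type map $[{\bf x}]\mapsto[\dots:{\bf x}^{I_k}:\dots]$, which turns every degree-$d$ moving hypersurface into a moving hyperplane, and then running the Cartan--Nochka argument with auxiliary (Wronskian) determinants and Nochka weights, as in the proof of Theorem \ref{th8} --- in the form
\[
(q-n-1)\,T_{f_p}(B',r)\le \frac1d\sum_{k=1}^{n+1}N_{f_p}(D_{k,f_p},r)+\frac1d\sum_{k=n+2}^{q}N^{[n]}_{f_p}(D_{k,f_p},r)+\varepsilon\,T_{f_p}(B',r)+C,
\]
where, thanks to the uniform bounds of (i), $\varepsilon$ may be taken arbitrarily small and $C$ uniform in $p$. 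Hypothesis (ii) bounds the first sum. For the second, a point of $f_p^{-1}(D_{k,f_p})$ of multiplicity $\ge m_k$ contributes to $N^{[n]}$ at most $n/m_k$ of its full multiplicity, while the remaining (low-multiplicity) points are exactly those whose $2(m-1)$-area is bounded by (iii); combined with the first main theorem $N_{f_p}(D_{k,f_p},r)\le d\,T_{f_p}(B',r)+O(1)$ this gives $\frac1d\sum_{k=n+2}^q N^{[n]}_{f_p}(D_{k,f_p},r)\le\big(n\sum_{k=n+2}^q\frac1{m_k}\big)T_{f_p}(B',r)+C'$. Inserting this and using $n\sum_{k=n+2}^q\frac1{m_k}<q-n-1$ absorbs the characteristic on the right and bounds $\sup_p T_{f_p}(B',r)$.

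With the characteristic locally bounded on $\mathbb D\setminus(\Sigma\cup S)$, the closures of the graphs $\overline{\Gamma_{f_p}}\subset(\mathbb D\setminus(\Sigma\cup S))\times\mathbb P^n(\mathbb C)$ have locally bounded volume, so by Bishop's compactness theorem a subsequence converges to an analytic set of the same dimension; discarding its components lying over analytic subsets identifies this set as the closure of the graph of a meromorphic map $\widehat f$ of $\mathbb D\setminus(\Sigma\cup S)$ into $\mathbb P^n(\mathbb C)$, and unwinding the convergence (equivalently, quoting Fujimoto's criterion, cf. \cite{F1}, that quasi-normality together with the area bound of (ii) yields meromorphic convergence) shows $f_p\to\widehat f$ meromorphically there. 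It then remains to extend $\widehat f$ across $\Sigma\cup S$: by (ii), (iii) and semicontinuity of divisors under meromorphic convergence, $\widehat f$ still meets each $\widehat D_j$ $(n+2\le j\le q)$ with multiplicity $\ge m_j$ outside a set of zero $(2m-1)$-measure and is contained in no $\widehat D_j$, so I would split $\Sigma$ into its codimension $\ge 2$ part and its codimension one part and apply extension theorems of the type of Theorems \ref{th5} and \ref{th6} (in the refined form matching (ii)--(iii), after a local resolution of the codimension one part to normal crossings), while the codimension $\ge 2$ remainder and the set $S$ are removable for meromorphic maps; checking that the convergence stays meromorphic across the exceptional set then produces the desired meromorphically convergent subsequence, so $\mathcal F$ is meromorphically normal on $\mathbb D$.

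The hard part will be the uniform-in-$p$ second main theorem: one must bound the ``moving'' error terms and the $o(T_{f_p})$ remainder uniformly over the whole sequence --- which is precisely where the boundedness of the coefficients and the uniform positivity $\inf_p D(\dots)(z_0)>0$ in hypothesis (i) are indispensable --- and one must arrange truncation level $n$ rather than the larger level suggested by the naive Veronese embedding. A second, more technical difficulty is the final bookkeeping: upgrading ``convergence off a thin analytic set'' to genuine meromorphic convergence and pushing the limit across the codimension one part of $\Sigma$, which is exactly why the hypotheses are phrased in terms of Lebesgue areas and the exceptional set $S$.
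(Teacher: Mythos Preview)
This theorem is quoted from \cite{DTT} and is not proved in the present paper; however, the paper does prove the closely analogous Theorem~\ref{th15} (and Theorem~\ref{th11}), and its method there follows the original Dethloff--Thai--Trang argument, which differs substantially from your outline. Their route never touches a second main theorem or Bishop's compactness. Instead, given a sequence $\{f_p\}$, they (a) use Lemma~\ref{lm5a} to extract limit polynomials $Q_j$ in weakly general position, (b) use Stoll's lemmas (Lemmas~\ref{lmb2}--\ref{lmb3}) to pass to a subsequence along which the sets $f_p^{-1}(D_{k,f_p})$ for $1\le k\le n+1$ and the low-multiplicity loci for $n+2\le k\le q$ converge as closed subsets to analytic sets $S_k$, (c) form the thin analytic exceptional set $E=\big(\bigcup_k S_k\cup\widetilde S\big)\setminus S$, where $\widetilde S$ is the resultant locus of the limit system, and (d) observe that on any $U_{z_1}\Subset(\mathbb D\setminus S)\setminus E$ the $f_p$ are holomorphic, miss $D_{1,f_p},\dots,D_{n+1,f_p}$ for large $p$, and meet the remaining $D_{k,f_p}$ with multiplicity $\ge m_k$; hence the \emph{holomorphic} normality criterion (Theorem~\ref{th8}) applies directly, giving local uniform convergence off $E\cup S$. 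The final step is not an extension theorem for the limit map but Lemma~\ref{lm5b}: the area bound (ii) on $f_p^{-1}(D_{k,f_p})$, $1\le k\le n+1$, upgrades ``holomorphic convergence off a thin closed set'' to meromorphic convergence on all of $\mathbb D$.

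Your approach via a uniform-in-$p$ second main theorem plus Bishop is a legitimate alternative strategy, and you correctly flag its hard point; what is worth knowing is that DTT's argument is engineered precisely to \emph{avoid} that hard point---no characteristic bound is ever needed, because after throwing away the thin set $E$ one is in the purely holomorphic situation with the full multiplicity hypotheses restored, and Zalcman--Aladro--Krantz (Lemma~\ref{lm5c}) plus Nochka's Picard theorem (Lemma~\ref{lm5}) suffice. Two smaller remarks: for this particular theorem the hypersurfaces lie in $\widetilde S(\{T_{i,f}\}_{i=0}^n)$, so the linearisation is via $[{\bf x}]\mapsto[P_{0,f}({\bf x}):\dots:P_{n,f}({\bf x})]$ into $\mathbb P^n$ (cf.\ Lemmas~\ref{lm6}--\ref{lm7}), not the Veronese embedding; and the extension across the exceptional set is handled by the divisor-area hypothesis (ii) through Lemma~\ref{lm5b}, rather than by invoking Big Picard--type results for the limit map.
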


 Let $\mathcal D = \{D_1,\dots,D_q\}$ be a collection of arbitrary hypersurfaces and $Q_j$ be the homogeneous polynomial in $\mathbb C[x_0, \dots, x_n]$
 of degree $d_j$ defining $D_j$ for $ j =1,\dots,q$. Let $m_{\mathcal D}$ be the least common multiple of the $d_j$ for $j=1,\dots,q$ and denote
$$n_{\mathcal D}= \binom{n+m_{\mathcal D}}{n}-1. $$
For $j=1,\dots,q,$ we set $Q^*_j = Q_j^{m_{\mathcal D}/d_j}$ and let $\ab^*_j$ be the vector associated with $Q^*_j$.

We recall the {\it lexicographic ordering} on the $(n+1)$-tuples of natural numbers: let $J=(j_0,\dots,j_n), I=(i_0,\dots,i_n) \in \N^{n+1}$, $J<I$ if 
and only if for some $b \in \{0,\dots,n\}$ we have $j_l = i_l$ for $l<b$ and $j_b < i_b$. With the $(n+1)$-tuples $I = (i_0,\dots, i_{n})$ of non-negative integers, we denote $|I|:= \sum_j i_j.$

Let $(x_0:\dots:x_n)$ be a homogeneous coordinates in $\mathbb P^n(\mathbb C)$ and let $\{I_0,\dots,I_{n_{\mathcal D}}\}$ be a set of $(n+1)$-tuples 
such that $|I_j|= m_{\mathcal D}, \ j =0,\dots,n_{\mathcal D},$ and $I_i < I_j$ for $i < j \in \{0,\dots,n_{\mathcal D}\} $. We denote ${\bf x}=(x_0,\dots,x_n) \in \mathbb C^{n+1},$ and write ${\bf x}^{I}$ as $x_0^{i_0}\dots x_n^{i_n}$ where $I = (i_0,\dots,i_n) \in \{I_0,\dots,I_{n_{\mathcal D}}\}$. Then we may order the set of monomials of degree $ m_{\mathcal D}$ as $\{ {\bf x}^{I_0}, \dots, {\bf x}^{I_{n_{\mathcal D}}}\}$.

Denote $$\varrho_{m_{\mathcal D}}: \mathbb P^n(\mathbb C) \to \mathbb P^{n_{\mathcal D}}(\mathbb C)$$ be the {\it Veronese embedding of degree 
$m_{\mathcal D}$}.  Let $(w_0:\dots:w_{n_{\mathcal D}})$ be the homogeneous coordinates in $\PP^{n_{\mathcal D}}(\mathbb C)$. 
Then $\varrho_{m_{\mathcal D}}$ is given by
$$\varrho_{m_{\mathcal D}}(x) = (w_0({\bf x}):\dots:w_{n_{\mathcal D}}({\bf x})), \ \text{ where } \ w_j(x) = {\bf x}^{I_j}, \ j=0,\dots,n_{\mathcal D}.$$
For any hypersurface $D_j\in \{D_1,\dots,D_q\}$ and $\ab_j = (a_{j0},\dots,a_{jn_{\mathcal D}})$ be the vector associated with $Q_j^{*}$, we set 
$$L_j=Q_j^{*}=  a_{j0}w_0 + \dots + a_{j{n_{\mathcal D}}}w_{n_{\mathcal D}}.$$
Then $L_j$ is a linear form in $\mathbb P^{n_{\mathcal D}}(\mathbb C)$. Let $H_j^{*}$ be a hyperplane in $\mathbb P^{n_{\mathcal D}}(\mathbb C)$, 
which is  defined by $L_j$, we say that the hyperplane $H_j^{*}$ associated with $Q_j^{*}$  (or $D_j$). Hence for the collection of hypersurfaces 
$\{D_1,\dots, D_q\}$ in $\mathbb P^{n}(\mathbb C)$, we have the collection $\mathcal H^{*} =\{H_1^{*},\dots,H_q^{*}\}$ of associated hyperplanes in
 $\mathbb P^{n_{\mathcal D}}(\mathbb C)$.

 Let $q> n_{\mathcal D}$ be a positive integer. We say that collection $\mathcal D$ is in general position for Veronese embedding in $\mathbb P^n(\mathbb C)$ 
if $\{H_1^{*},\dots,H_q^{*}\} $ is in general position in $\mathbb P^{n_{\mathcal D}}(\mathbb C).$ Hence, the collection $\mathcal D$ is in {\it general position for Veronese embedding} if for any distinct
 $i_0,\dots,i_{n_{\mathcal D}} \in \{1,\dots,q\}$, the vectors $\ab^*_{i_0},\dots,\ab^*_{i_{n_{\mathcal D}}}$ have rank $n_{\mathcal D}.$ It is seen that, 
for hyperplanes, general position for Veronese embedding is equivalent to the usual concept of hyperplanes in general position usual. For hypersurfaces, general 
position for Veronese embedding implies $n_{\mathcal D}$-subgeneral position.

Let $\mathcal D = \{D_1,\dots,D_q\}$ be a collection of arbitrary moving hypersurfaces and $Q_j$ be the homogeneous polynomial in $ H_{\mathbb D}[x_0, \dots, x_n]$ of degree $d_j$ defining $D_j$ for $ j =1,\dots,q$. Let $m_{\mathcal D}$ be the least common multiple of the $d_j$ for $j=1,\dots,q$ and denote
$$n_{\mathcal D}= \binom{n+m_{\mathcal D}}{n}-1. $$
For $j=1,\dots,q,$ we set $Q^*_j = Q_j^{m_{\mathcal D}/d_j}$ and let $\ab^*_j$ be the function vector associated with $Q^*_j.$ We set 
$$L_j=Q_j^{*} =  a_{j0}w_0 + \dots + a_{j{n_{\mathcal D}}}w_{n_{\mathcal D}}.$$
Then $L_j$ is a moving linear form in $\mathbb P^{n_{\mathcal D}}(\mathbb C)$. Let $H_j^{*}$ be a moving hyperplane in 
$\mathbb P^{n_{\mathcal D}}(\mathbb C)$, which is  defined by $L_j$ (or $Q_j^{*}$), we say that the hyperplane $H_j^{*}$ associated with
 $Q_j^{*}$  (or $D_j$). Hence for the collection of moving hypersurfaces $\{D_1,\dots, D_q\}$ in $\mathbb P^{n}(\mathbb C)$, we have the
 collection $\mathcal H^{*} =\{H_1^{*},\dots,H_q^{*}\}$ of associated moving hyperplanes in $\mathbb P^{n_{\mathcal D}}(\mathbb C)$.

\begin{defi}\label{dnm}
Let  $\{D_j\}_{j=1}^{q}$ be moving hypersurfaces in $\mathbb P^n(\mathbb C)$ which are defined by homogeneous 
$Q_j\in H_{\mathbb D}[x_0, \dots, x_n]$ of degree $d_j\; (q \ge n_{\mathcal D}+1),j=1,\dots,q.$ We say that moving hypersurfaces $\{D_j\}_{j=1}^{q}$
 are located in (weakly) general position for Veronese embedding in $\mathbb P^n(\mathbb C)$ if there exists $z\in \mathbb D$ such that, the collection
 $\mathcal H^{*}(z)=\{H_1^{*}(z),\dots,H_q^{*}(z)\}$ of associated hyperplanes in $\mathbb P^{n_{\mathcal D}}(\mathbb C)$ at $z$ are in general 
position in $\mathbb P^{n_{\mathcal D}}(\mathbb C).$
\end{defi}
\begin{remark}\label{rmn}
We see that hypersurfaces $\{D_j\}_{j=1}^{q}$ are located in (weakly) general position for Veronese embedding if there exists $z\in \mathbb D$ such that 
$$D(Q_1^{*}, \dots, Q_q^{*})(z) > 0$$
in $\mathbb P^{n_{\mathcal D}}(\mathbb C).$
\end{remark}

Our results are given as follows:

\begin{theorem}\label{th10}
Let $\mathcal F$ be a family of holomorphic mappings of a domain $\mathbb D$ in $\mathbb C^m$ into $\mathbb P^n(\mathbb C).$ Let
 $q\ge 2n_{\mathcal D}+1$ be a positive integer. Let $m_1, \dots, m_q$ be positive integers or $\infty$ such that
$$\sum_{j=1}^{q}\dfrac{d_j}{m_j}<\dfrac{(q-n_{\mathcal D}-1)m_{\mathcal D}}{n_{\mathcal D}}.$$
Suppose that for each $f\in \mathcal F,$ there exist $q$ moving hypersurfaces $D_{1,f}, \dots, D_{q,f}$ which are defined by homogeneous polynomials 
$Q_{1,f},\dots, Q_{q,f}\in H_{\mathbb D}[x_0, \dots, x_n]$ with degree $d_1,\dots,d_q$ respectively such that the following conditions are satisfied
 on each compact subset $K\subset \mathbb D:$

\noindent $(i)$ For each $1\le j \le q,$ the coefficients of the homogeneous polynomials $Q_{j,f}$ are bounded above uniformly for all $f$ in $\mathcal F$ 
on compact subset $K$ of $\mathbb D.$ Set $m_{\mathcal D}=lcm(d_1, \dots, d_q),$ $n_{\mathcal D}=\binom{n+m_{\mathcal D}}{n}-1,$ and 
$Q_{j,f}^{*}=Q_{j,f}^{m_{\mathcal D}/{d_j}}.$ Suppose that $Q_{j,f}^{*}=\sum_{i=0}^{n_{\mathcal D}}c_{ij}(f)\omega_i,\; (\omega_i={\bf x}^{I_i})$ 
in $H_{\mathbb D}[\omega_0,\dots,\omega_{n_{\mathcal D}}]$ is moving linear form defines the associated hyperplane 
$H_{j,f}^{*}$ of $D_{j,f}\; (j=1,\dots,q)$ and for any $\{f_p\}\subset \mathcal F,$ and for any fixed $z\in \mathbb D,$
 $$ \inf_{p\in \mathbb N}D(Q_{1,f_p}^{*}, \dots, Q_{q,f_p}^{*})(z) > 0.$$

\noindent $(ii)$ Assume that for each $f\in \mathcal F,$ and $\tilde f=(f_0,\dots, f_n)$ is a reduced representation of $f\in \mathcal{F}$  on $\mathbb D$ 
with $ f_i(z)\ne 0$ for 
some $i$ and $z,$ and when $m_j\geq 2$
        $$\sup_{1\leq |\alpha|\leq m_j-1, z \in f^{-1}(D_{j,f})\cap K}\Big|\dfrac{\partial^{\alpha}(Q_{j,f}(\tilde f_{i}))}{\partial z^{\alpha}}(z)\Big| < \infty$$
        hold for all $j\in\{1,\dots,q\}$ and all $f\in \mathcal F.$

Then $\mathcal F$ is a holomorphically normal family on $\mathbb D$.
\end{theorem}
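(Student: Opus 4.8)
The plan is to reduce Theorem~\ref{th10} to the moving-hyperplane case via the Veronese embedding, and then to imitate the argument behind Theorem~\ref{th8}. Given $f\in\mathcal F$ with reduced representation $\tilde f=(f_0,\dots,f_n)$ on a neighborhood $U\subset\mathbb D$, set $F:=\varrho_{m_{\mathcal D}}\circ f:U\to\mathbb P^{n_{\mathcal D}}(\mathbb C)$; a reduced representation of $F$ is obtained from $\tilde F=({\bf x}^{I_0}(\tilde f),\dots,{\bf x}^{I_{n_{\mathcal D}}}(\tilde f))$ after dividing out the (common) holomorphic gcd of its components, which may lower the divisor but does not affect the construction below. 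The moving hyperplanes $H^*_{j,f}$ associated with $Q^*_{j,f}=Q_{j,f}^{m_{\mathcal D}/d_j}$ satisfy $L_{j,f}(\tilde F)=Q^*_{j,f}(\tilde f)=Q_{j,f}(\tilde f)^{m_{\mathcal D}/d_j}$ (up to the gcd factor), so that $F$ intersects $H^*_{j,f}$ with multiplicity at least $m_j\,m_{\mathcal D}/d_j=:m_j^*$ along $f^{-1}(D_{j,f})=F^{-1}(H^*_{j,f})$. The hypothesis $\sum_j d_j/m_j<(q-n_{\mathcal D}-1)m_{\mathcal D}/n_{\mathcal D}$ is exactly $\sum_{j=1}^{q}1/m_j^*<(q-n_{\mathcal D}-1)/n_{\mathcal D}$, which is the Green--Nochka inequality in dimension $n_{\mathcal D}$; condition $(i)$ says the coefficient vectors $c_{ij}(f)$ of $L_{j,f}$ are locally bounded and that $\inf_p D(Q^*_{1,f_p},\dots,Q^*_{q,f_p})(z)>0$ at some fixed $z$, i.e. the $H^*_{j,f_p}$ are uniformly in pointwise general position; and condition $(ii)$ transfers to a uniform bound on the derivatives of $L_{j,f}(\tilde F_i)$ up to order $m_j^*-1$ on $F^{-1}(H^*_{j,f})\cap K$, since these are polynomial expressions in the $f_k/f_i$ and their derivatives with locally bounded coefficients.

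With this dictionary in place, the proof runs as the proof of Tu--Li / Dethloff--Thai--Trang does for hyperplanes. First I would argue locally: normality being a local property, fix $a\in\mathbb D$, a polydisc $\Delta\ni a$ with $\overline\Delta\subset\mathbb D$, and an arbitrary sequence $\{f_p\}\subset\mathcal F$; choose the point $z_0$ from $(i)$, and after shrinking $\Delta$ if necessary (using continuity of the determinant $D(\cdot)$ and the local boundedness of coefficients) arrange that $D(Q^*_{1,f_p},\dots,Q^*_{q,f_p})(z)\ge\delta>0$ for all $z\in\overline\Delta$ and all $p$. The standard device is then to suppose $\mathcal F$ is not normal at $a$ and invoke a Zalcman--Brody type rescaling lemma for mappings into $\mathbb P^{n_{\mathcal D}}(\mathbb C)$ (the several-variables version used by Tu): one obtains points $z_p\to a$, radii $\rho_p\to 0^+$, unit vectors, and rescalings $g_p(\zeta):=F_p(z_p+\rho_p\zeta)$ restricted to a complex line, converging uniformly on compact subsets of $\mathbb C$ to a non-constant holomorphic map $g:\mathbb C\to\mathbb P^{n_{\mathcal D}}(\mathbb C)$. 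The uniform general-position bound guarantees the limiting hyperplanes $H^*_j$ (extracted as limits of $H^*_{j,f_p}$, possible by local boundedness of coefficients after passing to a subsequence) are still in general position.

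The crux is the multiplicity-passing step: one must show that $g$ intersects each limiting hyperplane $H^*_j$ with multiplicity $\ge m_j^*$ (or omits it), so that $g$ contradicts the Green--Nochka Picard-type theorem under $\sum 1/m_j^*<(q-n_{\mathcal D}-1)/n_{\mathcal D}$. This is where condition $(ii)$ is essential and where I expect the main technical obstacle to lie: without a derivative bound the zeros of $L_{j,f_p}(\tilde F_{i,p})$ could, in the limit, collapse to zeros of lower order, destroying the multiplicity hypothesis. The argument is a Hurwitz-type analysis: on the rescaled scale, $L_{j,f_p}(g_{p})$ vanishes to order $\ge m_j^*$ at its zeros, and the uniform bound on derivatives up to order $m_j^*-1$ (pulled back through the affine change $\zeta\mapsto z_p+\rho_p\zeta$, which only improves the bound since $\rho_p\to0$) forces the limit $L_j(g)$ either to vanish identically or to vanish to order $\ge m_j^*$ at each of its zeros; ruling out $L_j(g)\equiv0$ uses the general position of the $H^*_j$ together with non-constancy of $g$. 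Finally, since $F_p=\varrho_{m_{\mathcal D}}\circ f_p$ and the Veronese embedding is a proper holomorphic embedding, (holomorphic) convergence of a subsequence of $\{F_p\}$ on $\Delta$ yields convergence of the corresponding subsequence of $\{f_p\}$ to a holomorphic map $\Delta\to\mathbb P^n(\mathbb C)$; a diagonal argument over an exhaustion of $\mathbb D$ then gives a globally convergent subsequence, so $\mathcal F$ is holomorphically normal.
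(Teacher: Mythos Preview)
Your overall architecture matches the paper's: Veronese embedding to linearize, the Aladro--Krantz rescaling lemma (Lemma~\ref{lm5c}), a Hurwitz-type multiplicity argument driven by condition~$(ii)$, and Nochka's Picard theorem (Lemma~\ref{lm5}) for the contradiction. The difference is that you compose with $\varrho_{m_{\mathcal D}}$ \emph{before} rescaling and try to run the Hurwitz step directly in $\mathbb P^{n_{\mathcal D}}(\mathbb C)$, and this is where a gap opens.

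Your assertion that condition~$(ii)$ ``transfers to a uniform bound on the derivatives of $L_{j,f}(\tilde F_i)$ up to order $m_j^*-1$'' is not justified: with $h=Q_{j,f}(\tilde f_i)$ and $k=m_{\mathcal D}/d_j$, at a zero of $h$ the Leibniz expansion of $\partial^\alpha(h^k)$ for $|\alpha|\le km_j-1$ contains products $\prod_s\partial^{\beta_s}h$ in which some $|\beta_s|$ may well exceed $m_j-1$, and those higher derivatives are \emph{not} controlled by~$(ii)$---they involve high-order derivatives of the $f_k/f_i$, which are exactly what blows up when normality fails. Likewise, ``$L_{j,f_p}(g_p)$ vanishes to order $\ge m_j^*$ at its zeros'' is false as stated: condition~$(ii)$ is a boundedness hypothesis, not a vanishing one, and that distinction is the entire content of Theorem~\ref{th10} over Corollary~\ref{corth10}. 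The paper sidesteps both issues by reversing your order: it rescales $\{f_p\}$ in $\mathbb P^n(\mathbb C)$ first, obtaining a nonconstant limit $g$, and carries out the Hurwitz step there, where condition~$(ii)$ applies directly to give $\bigl|(Q_{j,f_p,y_p+r_pu_p\xi}(\tilde g_{i,p}))^{(\ell)}(\xi_p)\bigr|\le C r_p^{\ell}M\to 0$ for $1\le\ell\le m_j-1$, so $g$ meets $D_{j,y_0}$ with multiplicity $\ge m_j$. Only \emph{then} does one compose with $\varrho_{m_{\mathcal D}}$: the algebraic identity $Q_j^*(y_0)(\tilde G)=Q_{j,y_0}(\tilde g)^{m_{\mathcal D}/d_j}$ immediately yields multiplicity $\ge m_j\,m_{\mathcal D}/d_j$ for $G=\varrho_{m_{\mathcal D}}\circ g$ against $H_j^*(y_0)$, and Lemma~\ref{lm5} in $\mathbb P^{n_{\mathcal D}}(\mathbb C)$ forces $G$, hence $g$, to be constant. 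Your final step (pulling normality back through the Veronese embedding) is then unnecessary: the contradiction already lives at the level of $g$.
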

Theorem \ref{th10} is strongly extended the Montel's normal criterion in the case several variables which due to Tu \cite{T1}. From Theorem \ref{th10}, we get the result as follows:
\begin{cor}\label{corth10}
Let $\mathcal F$ be a family of holomorphic mappings of a domain $\mathbb D$ in $\mathbb C^m$ into $\mathbb P^n(\mathbb C).$ Let
 $q\ge 2n_{\mathcal D}+1$ be a positive integer. Let $m_1, \dots, m_q$ be positive integers or $\infty$ such that
$$\sum_{j=1}^{q}\dfrac{d_j}{m_j}<\dfrac{(q-n_{\mathcal D}-1)m_{\mathcal D}}{n_{\mathcal D}}.$$
Suppose that for each $f\in \mathcal F,$ there exist $q$ moving hypersurfaces $D_{1,f}, \dots, D_{q,f}$ which are defined by homogeneous polynomials
 $Q_{1,f},\dots, Q_{q,f}\in H_{\mathbb D}[x_0, \dots, x_n]$ with degree $d_1,\dots,d_q$ respectively such that the following conditions are satisfied  
on each compact subset $K\subset \mathbb D:$

\noindent $(i)$ For each $1\le j \le q,$ the coefficients of the homogeneous polynomials $Q_{j,f}$ are bounded above uniformly for all $f$ in $\mathcal F$ on 
  compact subset $K\subset \mathbb D.$ Set $m_{\mathcal D}=lcm(d_1, \dots, d_q),$ $n_{\mathcal D}=\binom{n+m_{\mathcal D}}{n}-1,$ and
 $Q_{j,f}^{*}=Q_{j,f}^{m_{\mathcal D}/{d_j}}.$ Suppose that $Q_{j,f}^{*}=\sum_{i=0}^{n_{\mathcal D}}c_{ij}(f)\omega_i,\; (\omega_i={\bf x}^{I_i})$ 
in $H_{\mathbb D}[\omega_0,\dots,\omega_{n_{\mathcal D}}]$ is moving linear form defines the associated hyperplane $H_{j,f}^{*}$ of
 $D_{j,f}\; (j=1,\dots,q)$ and for any $\{f_p\}\subset \mathcal F,$ and for any fixed $z\in \mathbb D,$
 $$ \inf_{p\in \mathbb N}D(Q_{1,f_p}^{*}, \dots, Q_{q,f_p}^{*})(z) > 0.$$

\noindent $(ii)$ Assume that for each $f\in \mathcal F,$ $f$ intersects $D_{j,f}$ with multiplicity at least $m_j$ for each $1\le j\le q.$ 

Then $\mathcal F$ is a holomorphically normal family on $\mathbb D$.
\end{cor}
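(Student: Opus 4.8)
The plan is to deduce the statement directly from Theorem \ref{th10}: hypothesis $(i)$ of the corollary is word-for-word hypothesis $(i)$ of that theorem, so the only thing to check is that the intersection hypothesis $(ii)$ of the corollary implies the derivative-boundedness hypothesis $(ii)$ of Theorem \ref{th10}. First I would reduce to a local statement. Since normality is a local property, it suffices to prove that every point of $\mathbb D$ has a neighborhood on which $\mathcal F$ is normal; thus we may assume $\mathbb D$ is a ball, on which each $f\in\mathcal F$ admits a reduced representation $\tilde f=(f_0,\dots,f_n)$ on all of $\mathbb D$. Recall moreover, as noted in the introduction, that the rational expressions $\tilde f_i=(f_0/f_i,\dots,f_n/f_i)$, and hence the functions $Q_{j,f}(\tilde f_i)$, do not depend on the chosen reduced representation, so all quantities appearing in Theorem \ref{th10}$(ii)$ are intrinsically attached to $f$.

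Next, fix $f\in\mathcal F$, an index $j$ with $m_j\ge 2$, a compact $K\subset\mathbb D$, and a point $z_0\in f^{-1}(D_{j,f})\cap K$. Choose $i$ with $f_{i}(z_0)\neq 0$ --- possible because $\tilde f$ is reduced --- and work on a neighborhood $U$ of $z_0$ on which $f_{i}$ is zero-free. Since $Q_{j,f}$ is homogeneous of degree $d_j$, on $U$ we have $Q_{j,f}(\tilde f_{i})=Q_{j,f}(\tilde f)/f_{i}^{\,d_j}$, so $Q_{j,f}(\tilde f_{i})$ differs from the holomorphic function $Q_{j,f}(\tilde f)$ by the nowhere-vanishing factor $f_{i}^{-d_j}$. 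The hypothesis that $f$ intersects $D_{j,f}$ with multiplicity at least $m_j$ means precisely that $Q_{j,f}(\tilde f)$ vanishes to order at least $m_j$ at $z_0$, i.e. $\partial^{\alpha}\big(Q_{j,f}(\tilde f)\big)(z_0)=0$ for all $\alpha$ with $|\alpha|\le m_j-1$; multiplying by the non-vanishing holomorphic function $f_{i}^{-d_j}$ and applying the Leibniz rule shows that $\partial^{\alpha}\big(Q_{j,f}(\tilde f_{i})\big)(z_0)=0$ for all $\alpha$ with $|\alpha|\le m_j-1$ as well. Since $z_0\in f^{-1}(D_{j,f})\cap K$ was arbitrary, the supremum in Theorem \ref{th10}$(ii)$ is in fact equal to $0$, hence finite; the degenerate cases are trivial --- for $m_j=1$ the index set $\{1\le|\alpha|\le m_j-1\}$ is empty, while for $m_j=\infty$ the set $f^{-1}(D_{j,f})$ is empty. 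Thus all hypotheses of Theorem \ref{th10} hold, and that theorem gives the normality of $\mathcal F$ on $\mathbb D$.

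The step I expect to require the most care is the first one: checking that the reduction to a ball carrying a global reduced representation, together with the representation-independence of $Q_{j,f}(\tilde f_i)$, makes the pointwise verification over $f^{-1}(D_{j,f})\cap K$ legitimate. Once that is in place the rest is the elementary observation that ``intersection multiplicity $\ge m_j$'' is exactly the statement that all partials of order $\le m_j-1$ of $Q_{j,f}(\tilde f_i)$ vanish along $f^{-1}(D_{j,f})$, which turns the bound required in Theorem \ref{th10}$(ii)$ into an equality with $0$.
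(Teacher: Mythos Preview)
Your argument is correct and follows exactly the route the paper intends: the paper states Corollary \ref{corth10} with the phrase ``From Theorem \ref{th10}, we get the result as follows'' and gives no further proof, so the content is precisely the observation you spell out --- that intersection with multiplicity at least $m_j$ forces all partial derivatives of $Q_{j,f}(\tilde f_i)$ of order $\le m_j-1$ to vanish along $f^{-1}(D_{j,f})$, making the supremum in Theorem \ref{th10}$(ii)$ equal to $0$. Your treatment of the reduction to a ball, the representation-independence of $\tilde f_i$, and the edge cases $m_j=1$ and $m_j=\infty$ is more careful than anything the paper writes down.
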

\begin{theorem}\label{th11}
Let $\mathcal F$ be a family of meromorphic mappings of a domain $\mathbb D$ in $\mathbb C^m$ into $\mathbb P^n(\mathbb C).$ Let 
$q\ge 2n_{\mathcal D}+1$ be a positive integer. Suppose that for each $f\in \mathcal F,$ there exist $q$ moving hypersurfaces $D_{1,f}, \dots, D_{q,f}$
 which defined by homogeneous polynomials $Q_{1,f},\dots, Q_{q,f}\in H_{\mathbb D}[x_0, \dots, x_n]$ with degree $d_1,\dots,d_q$ 
respectively such that the following conditions are satisfied:

\noindent $(i)$ For each $1\le j \le q,$ the coefficients of the homogeneous polynomials $Q_{j,f}$ are bounded above uniformly for all $f$ in $\mathcal F$
 on compact subsets of $\mathbb D.$ Set $m_{\mathcal D}=lcm(d_1, \dots, d_q),$ $n_{\mathcal D}=\binom{n+m_{\mathcal D}}{n}-1,$ and
$Q_{j,f}^{*}=Q_{j,f}^{m_{\mathcal D}/{d_j}}.$ Suppose that $Q_{j,f}^{*}=\sum_{i=0}^{n_{\mathcal D}}c_{ij}(f)\omega_i,\; (\omega_i={\bf x}^{I_i})$ 
in $H_{\mathbb D}[\omega_0,\dots,\omega_{n_{\mathcal D}}]$ is moving linear form defines the associated hyperplane $H_{j,f}^{*}$ of $D_{j,f}$ and
 for any $\{f_p\}\subset \mathcal F,$ there exists $z\in \mathbb D$ (which may depend on sequence) such that
 $$ \inf_{p\in \mathbb N}D(Q_{1,f_p}^{*}, \dots, Q_{q,f_p}^{*})(z) > 0.$$

\noindent $(ii)$ For any fixed compact $K$ of $\mathbb D,$ the $2(m-1)$-dimensional Lebesgue areas of 
$f^{-1}(D_{k,f})\cap K$ $(1\le k\le n_{\mathcal D}+1)$ counting multiplicities are bounded above for all $f\in \mathcal F$ 
(in particular, $f(\mathbb D)\not\subset D_{k,f} (1\le k\le n_{\mathcal D}+1)).$

\noindent $(iii)$ There exists a nowhere dense analytic set $S$ in $\mathbb D$ such that for any fixed compact subset $K$ of 
$\mathbb D-S,$ the $2(m-1)$-dimensional Lebesgue areas of
$$  \{z\in \text{supp} \nu_{Q_{k,f}(\tilde f)}: \nu_{Q_{k,f}(\tilde f)}(z)<m_k\}\cap K \; (n_{\mathcal D}+2\le k\le q)$$
ignoring multiplicities for all $f\in \mathcal F$ are bounded above, where $\{m_k\}_{k=n_{\mathcal D}+2}^{q}$ are fixed positive integers or $\infty$ with
$$ \sum_{k=n_{\mathcal D}+2}^{q}\dfrac{d_k}{m_k}<\dfrac{(q-n_{\mathcal D}-1)m_{\mathcal D}}{n_{\mathcal D}}.$$

Then $\mathcal F$ is a quasi-normal family on $\mathbb D.$
\end{theorem}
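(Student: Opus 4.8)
\noindent\emph{Sketch of proof.} The plan is to use the Veronese embedding to convert the moving hypersurfaces $D_{j,f}$ of degrees $d_1,\dots,d_q$ into moving hyperplanes and then to appeal to Theorem \ref{th9}. Put $m_{\mathcal D}=\mathrm{lcm}(d_1,\dots,d_q)$ and $n_{\mathcal D}=\binom{n+m_{\mathcal D}}{n}-1$. For $f\in\mathcal F$ with a reduced representation $\tilde f=(f_0,\dots,f_n)$ near a given point, I set
$$g_f:=\varrho_{m_{\mathcal D}}\circ f,\qquad \tilde g_f:=\bigl(\tilde f^{I_0},\dots,\tilde f^{I_{n_{\mathcal D}}}\bigr),$$
the tuple of all degree-$m_{\mathcal D}$ monomials in $f_0,\dots,f_n$. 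The common zero set of these monomials is precisely $I(f)=\{f_0=\dots=f_n=0\}$, which has codimension $\ge 2$; hence $\tilde g_f$ is again a \emph{reduced} representation, $g_f$ is a well-defined meromorphic mapping of $\mathbb D$ into $\mathbb P^{n_{\mathcal D}}(\mathbb C)$ with $I(g_f)=I(f)$, and $g_f$ takes values in the closed complex submanifold $V:=\varrho_{m_{\mathcal D}}(\mathbb P^n(\mathbb C))$, on which $\varrho_{m_{\mathcal D}}^{-1}\colon V\to\mathbb P^n(\mathbb C)$ is a biholomorphism. Moreover, with $Q_{j,f}^{*}=Q_{j,f}^{m_{\mathcal D}/d_j}=\sum_i c_{ij}(f)\,\omega_i$, one has $Q_{j,f}^{*}(\tilde g_f)=\bigl(Q_{j,f}(\tilde f)\bigr)^{m_{\mathcal D}/d_j}$, whence the divisor identity $\nu_{Q_{j,f}^{*}(\tilde g_f)}=(m_{\mathcal D}/d_j)\,\nu_{Q_{j,f}(\tilde f)}$; in particular $g_f^{-1}(H_{j,f}^{*})=f^{-1}(D_{j,f})$ as sets, and the $2(m-1)$-dimensional Lebesgue area counted with multiplicity of $g_f^{-1}(H_{j,f}^{*})\cap K$ is $(m_{\mathcal D}/d_j)$ times that of $f^{-1}(D_{j,f})\cap K$.

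Next I would verify the hypotheses of Theorem \ref{th9} for the family $\mathcal G=\{g_f:f\in\mathcal F\}$ of meromorphic mappings into $\mathbb P^{n_{\mathcal D}}(\mathbb C)$, taking as the $n_{\mathcal D}+1$ base hypersurfaces $T_0^{*},\dots,T_{n_{\mathcal D}}^{*}$ the fixed coordinate hyperplanes $\{\omega_i=0\}$, and taking ``$D_{j,f}$'' there to be the associated moving hyperplane $H_{j,f}^{*}\in\overset{\sim}S(\{T_i^{*}\}_{i=0}^{n_{\mathcal D}})$ with linear-combination coefficients $b_{ij}(f)=c_{ij}(f)$. Since each $c_{ij}(f)$ is a polynomial with nonnegative integer coefficients in the coefficients of $Q_{j,f}$, hypothesis $(i)$ of our theorem gives the uniform boundedness required in hypothesis $(i)$ of Theorem \ref{th9}, while the remaining requirement there is precisely $\inf_{p}D(Q^{*}_{1,f_p},\dots,Q^{*}_{q,f_p})(z)>0$, which is assumed. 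Hypothesis $(ii)$ of Theorem \ref{th9} for $1\le k\le n_{\mathcal D}+1$ follows from our hypothesis $(ii)$ by the area-scaling just recorded. Finally, for $n_{\mathcal D}+2\le k\le q$ set $m_k^{*}:=(m_{\mathcal D}/d_k)\,m_k$; this is a positive integer (or $\infty$) because $d_k\mid m_{\mathcal D}$, and the divisor identity gives
$$\{z\in\text{supp}\,\nu_{Q^{*}_{k,f}(\tilde g_f)}:\nu_{Q^{*}_{k,f}(\tilde g_f)}(z)<m_k^{*}\}=\{z\in\text{supp}\,\nu_{Q_{k,f}(\tilde f)}:\nu_{Q_{k,f}(\tilde f)}(z)<m_k\},$$
whose areas ignoring multiplicities are bounded by our hypothesis $(iii)$; furthermore $\sum_{k=n_{\mathcal D}+2}^{q}1/m_k^{*}=m_{\mathcal D}^{-1}\sum_{k=n_{\mathcal D}+2}^{q}d_k/m_k<(q-n_{\mathcal D}-1)/n_{\mathcal D}$, and the nowhere dense analytic set $S$ is closed with $\Lambda^{2m-1}(S)=0$. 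As $q\ge 2n_{\mathcal D}+1$, all hypotheses of Theorem \ref{th9} hold with $n$ replaced by $n_{\mathcal D}$, so $\mathcal G$ is a meromorphically normal --- in particular, quasi-normal --- family on $\mathbb D$.

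It then remains to descend quasi-normality from $\mathcal G$ to $\mathcal F$. Given $\{f_p\}\subset\mathcal F$, pass to a subsequence so that $\{g_{f_p}\}$ is quasi-regular on $\mathbb D$, fix $z_0\in\mathbb D$, and choose a neighbourhood $U$ of $z_0$ and a nowhere dense analytic subset $S_0\subset U$ as in Definition \ref{dn4a}. For any domain $G\subset U\setminus S_0$ whose closure in $U\setminus S_0$ is a compact subset of $U$, we have $I(g_{f_p})\cap G=\emptyset$ and $g_{f_p}|_G$ holomorphic for all large $p$, with $\{g_{f_p}|_G\}$ converging uniformly on $G$ to a holomorphic map; since each $g_{f_p}(G)\subset V$ and $V$ is closed, the limit maps into $V$, so $f_p|_G=\varrho_{m_{\mathcal D}}^{-1}\circ g_{f_p}|_G$ for all large $p$ and, $\varrho_{m_{\mathcal D}}^{-1}$ being uniformly continuous on the compact set $V$, $\{f_p|_G\}$ converges uniformly on $G$; moreover $I(f_p)\cap G=I(g_{f_p})\cap G=\emptyset$ for large $p$. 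Hence the same $U$ and $S_0$ witness that $\{f_p\}$ is quasi-regular near $z_0$, and since $z_0$ was arbitrary, $\{f_p\}$ is quasi-regular on $\mathbb D$; thus $\mathcal F$ is quasi-normal.

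The step I expect to be the crux is the bookkeeping in the second paragraph: that $f\mapsto\varrho_{m_{\mathcal D}}\circ f$ sends a reduced representation to a reduced representation --- which rests exactly on $\dim_{\mathbb C}I(f)\le m-2$ and is what forces $I(g_f)=I(f)$ and validates the divisor identity --- and that, with the choice $m_k^{*}=(m_{\mathcal D}/d_k)m_k$, the three quantitative hypotheses of the theorem convert \emph{exactly} into those of Theorem \ref{th9} after replacing $n$ by $n_{\mathcal D}$, the divisibility $d_k\mid m_{\mathcal D}$ being precisely what is needed to keep $m_k^{*}$ integral and to make the truncated supports coincide. The remaining ingredients --- boundedness of the $c_{ij}(f)$ from that of the coefficients of $Q_{j,f}$, and the descent through $\varrho_{m_{\mathcal D}}^{-1}$ using that $\varrho_{m_{\mathcal D}}$ is a closed embedding --- are routine.
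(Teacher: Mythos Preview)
Your proof is correct but follows a genuinely different route from the paper's. The paper argues \emph{directly}: given a sequence $\{f_p\}$, it passes (via Lemma~\ref{lm5a}) to a subsequence along which the $Q_{j,f_p}^{*}$ converge, then uses Lemmas~\ref{lmb2}--\ref{lmb3} to extract further subsequences so that the divisor sets $f_p^{-1}(D_{k,f_p})$ and the truncated supports converge to analytic sets $S_k$; adding the resultant locus $\widetilde S$ of the limit hyperplanes $Q_1^{*},\dots,Q_q^{*}$, it forms the exceptional set $E=(\bigcup_k S_k\cup\widetilde S)-S$ and applies Corollary~\ref{corth10} locally on $(\mathbb D-S)-E$ to obtain holomorphic normality there, hence quasi-regularity on $\mathbb D$. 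The Veronese embedding is used only implicitly, inside the proof of Theorem~\ref{th10}/Corollary~\ref{corth10}.

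By contrast, you globalise the Veronese step: you replace $\mathcal F$ by $\mathcal G=\{\varrho_{m_{\mathcal D}}\circ f\}$, check that the hypotheses of Theorem~\ref{th9} (with $n$ replaced by $n_{\mathcal D}$ and $T_i^{*}=\{\omega_i=0\}$) are inherited --- the key observations being that $\widetilde g_f$ is still reduced with $I(g_f)=I(f)$, that areas scale by $m_{\mathcal D}/d_k$, and that setting $m_k^{*}=(m_{\mathcal D}/d_k)m_k$ makes the truncated supports coincide and the numerical inequality match --- and then descend quasi-regularity through the biholomorphism $\varrho_{m_{\mathcal D}}^{-1}\colon V\to\mathbb P^n(\mathbb C)$. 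This is cleaner and avoids reproving the Stoll-type convergence of divisors and the local normal-family argument, at the cost of the (routine) descent step; the paper's approach, on the other hand, makes the exceptional analytic set entirely explicit. Both are valid; yours leans more heavily on the existing black box of Theorem~\ref{th9}, while the paper essentially re-runs that argument in the hypersurface setting via Corollary~\ref{corth10}.
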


\begin{theorem}\label{th12}
Let $S$ be an analytic subset of a domain $\mathbb D$ in $\mathbb C^m$ with $\dim_{\mathbb C} S\le m-2.$ Let $f$ be a holomorphic mapping from 
$\mathbb D-S$ into $\mathbb P^n(\mathbb C).$ Let $D_1, \dots, D_q$ be $q$ $(\ge 2n_{\mathcal D}+1)$ moving hypersurfaces in $\mathbb P^n(\mathbb C).$
 Let $Q_j^{*}=\sum_{i=0}^{n_{\mathcal D}}d_{ij}\omega_i,\; (\omega_i={\bf x}^{I_i})$ in $H_{\mathbb D}[\omega_0,\dots,\omega_{n_{\mathcal D}}]$ 
be moving linear form defines the associated hyperplane $H_j^{*}$  of $D_j$ such that for any $z\in \mathbb D:$
 $$D(Q_1^{*}, \dots, Q_q^{*})(z) > 0.$$ 
Let $\tilde f=(f_0,\dots, f_n)$ be a reduced representation of $f$  on $\mathbb D-S$ 
with $ f_i(z)\ne 0$ for some $i$ and $z,$ and when $m_j\geq 2$
        $$\sup_{1\leq |\alpha|\leq m_j-1, z \in f^{-1}(D_{j})}\Big|\dfrac{\partial^{\alpha}(Q_{j}(\tilde f_{i}))}{\partial z^{\alpha}}(z)\Big| < \infty$$
        hold for all $j\in\{1,\dots,q\},$  where $m_1, \dots, m_q$ are positive integers and may be $\infty,$ 
with $\sum_{j=1}^{q}\dfrac{d_j}{m_j}<\dfrac{(q-n_{\mathcal D}-1)m_{\mathcal D}}{n_{\mathcal D}}.$ Then the holomorphic mapping 
$f$  from $\mathbb D-S$ into $\mathbb P^n(\mathbb C)$ extends to a holomorphic mapping from $\mathbb D$ into $\mathbb P^n(\mathbb C).$
\end{theorem}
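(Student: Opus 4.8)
The plan is to reduce $f$ to its meromorphic extension, then reduce the extension problem to the two–dimensional case, and finally apply the normality criterion of Theorem \ref{th10} to a family of rescalings; this follows the strategy of Tu--Li \cite{TL1}, but I pass through the Veronese embedding so that the moving hypersurfaces $D_j$ turn into the moving hyperplanes $H_j^{*}$.

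\emph{Step 1 (meromorphic extension and reduction).} Since $\dim_{\mathbb C}S\le m-2$, each component of a local reduced representation of $f$ on $\mathbb D-S$ extends holomorphically across $S$ by Hartogs's extension theorem; dividing out the greatest common divisor yields a meromorphic mapping $\hat f\colon\mathbb D\to\mathbb P^n(\mathbb C)$ extending $f$, whose indeterminacy locus $I(\hat f)$ is analytic of codimension $\ge 2$ and is contained in $S$ (on $\mathbb D-S$ the new reduced representation has no common zeros, as $f$ is holomorphic there). It suffices to show $I(\hat f)=\emptyset$. Assume not and fix $a\in I(\hat f)$. Blowing up $a$, the rational map induced on the exceptional divisor $\mathbb P^{m-1}$ has image equal to the cluster set of $\hat f$ at $a$, which has dimension $\ge 1$; hence for a generic complex $2$-plane $P$ through $a$ one has $P\cap S=\{a\}$ near $a$, while $f|_{P\setminus\{a\}}$ still fails to extend holomorphically across $a$. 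Restricting all the data to $P$ keeps every hypothesis in force: $D(Q_1^{*},\dots,Q_q^{*})(a)>0$ is unaffected, and the supremum in $(ii)$ stays finite since a partial derivative along $P$ is a constant-coefficient combination of ambient partial derivatives of the same order. Thus we may assume $m=2$, $\mathbb D=\Delta^2$, $S=\{0\}$, with $f\colon\Delta^2\setminus\{0\}\to\mathbb P^n(\mathbb C)$ not extendable across $0$, and we must derive a contradiction.

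\emph{Step 2 (rescaling and normality).} For $\lambda\in\mathbb C$ with $0<|\lambda|$ small and $z$ in a neighbourhood of a closed spherical shell $\Omega_R=\{1/2\le\|z\|\le R\}\subset\mathbb C^2$, put $F_\lambda(z)=f(\lambda z)$; since $S=\{0\}$ and $\lambda z\ne 0$, $F_\lambda$ is holomorphic there. The family $\{F_\lambda\}$ satisfies the hypotheses of Theorem \ref{th10} uniformly: the coefficients of $Q_{j}(\lambda\,\cdot\,)$ stay bounded on $\Omega_R$ because $\lambda z$ ranges in a fixed compact neighbourhood of $0$; for each fixed $z$, $\inf_\lambda D(Q_1^{*},\dots,Q_q^{*})(\lambda z)>0$ by continuity and $D(Q_1^{*},\dots,Q_q^{*})(0)>0$; and, writing $h_j(w)=Q_j(w)(\tilde f_i(w))$, one has $\partial^{\alpha}_z\big(Q_j((\widetilde{F_\lambda})_i)\big)(z)=\lambda^{|\alpha|}(\partial^{\alpha}h_j)(\lambda z)$, so the supremum in $(ii)$ for $F_\lambda$ is at most $|\lambda|^{|\alpha|}$ times the (finite) one for $f$. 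By Theorem \ref{th10}, $\{F_\lambda\}$ is a normal family on each $\Omega_R$; a diagonal argument then produces $\lambda_k\to 0$ with $F_{\lambda_k}\to g$ locally uniformly on $\mathbb C^2\setminus\{0\}$, and by Hartogs's theorem $g$ extends to a holomorphic mapping $g\colon\mathbb C^2\to\mathbb P^n(\mathbb C)$.

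\emph{Step 3 (the limit is constant; conclusion).} Because $|\alpha|\ge 1$ and $\lambda_k\to 0$, the identity $\partial^{\alpha}_z\big(Q_j((\widetilde{F_{\lambda_k}})_i)\big)=\lambda_k^{|\alpha|}(\partial^{\alpha}h_j)(\lambda_k\,\cdot\,)$ forces $\partial^{\alpha}\big(Q_j(0)(\tilde g_i)\big)\equiv 0$ on $g^{-1}(D_j(0))$ for $1\le|\alpha|\le m_j-1$; that is, $g$ meets the fixed hypersurface $D_j(0)$ with multiplicity $\ge m_j$. Composing with $\varrho_{m_{\mathcal D}}$, the map $\varrho_{m_{\mathcal D}}\circ g\colon\mathbb C^2\to\mathbb P^{n_{\mathcal D}}(\mathbb C)$ meets the $q\ge 2n_{\mathcal D}+1$ fixed hyperplanes $H_j^{*}(0)$, which are in general position because $D(Q_1^{*},\dots,Q_q^{*})(0)>0$, with multiplicity $\ge m_j m_{\mathcal D}/d_j$, and $\sum_{j=1}^{q}\frac{d_j}{m_j m_{\mathcal D}}<\frac{q-n_{\mathcal D}-1}{n_{\mathcal D}}$; so by the Picard-type theorem of Green and Nochka for holomorphic mappings of $\mathbb C^m$ into projective space (as used in \cite{T1}), $\varrho_{m_{\mathcal D}}\circ g$, hence $g$, is constant. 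Since every subsequential limit of the normal family $\{F_\lambda\}$ is thus a constant, the standard spreading argument over overlapping shells (as in the big Picard theorem; cf.\ Kwack's theorem and \cite{TL1}) shows that $\lim_{w\to 0}f(w)$ exists in $\mathbb P^n(\mathbb C)$. In an affine chart containing this limit the components of $f$ are then bounded holomorphic functions on $\Delta^2\setminus\{0\}$, so by Riemann's extension theorem $f$ extends holomorphically across $0$, contradicting the choice of $a$. Hence $I(\hat f)=\emptyset$ and $f=\hat f$ is the desired holomorphic extension to $\mathbb D$.

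The main obstacle is Step 3 together with the preliminary reduction: upgrading ``every rescaled subsequence clusters at a constant'' to ``$\lim_{w\to 0}f(w)$ exists'' requires the usual overlapping-shell bookkeeping and the normality of the full (continuously parametrised) family $\{F_\lambda\}$, and the reduction to $m=2$ rests on the fact that a generic line in the exceptional $\mathbb P^{m-1}$ is not contracted by the rational map induced on it. Both are classical, but the decisive point is that Theorem \ref{th10} applies verbatim to the rescaled family and that hypothesis $(ii)$ degenerates, in the limit $\lambda\to 0$, into a genuine multiplicity condition to which the Green--Nochka Picard theorem applies after the Veronese reduction.
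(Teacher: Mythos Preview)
Your argument is essentially sound, but it takes a genuinely different route from the paper. The paper never reduces the dimension and never forms a rescaling family. Instead it introduces the notion of a \emph{normal holomorphic mapping} (Definition~\ref{normal}) and proves, via the Zalcman-type Lemma~\ref{nf} together with exactly the same Veronese--Nochka step you use, that $f$ is normal on every bounded $U\Subset\mathbb D$ (this is Lemma~\ref{lmth12}). Normality gives the Lipschitz estimate $d_{\mathbb P^n}(f(z),f(w))\le c\,d^{K}_{U-S}(z,w)$, and by Lemma~\ref{lmth12a} (the Noguchi--Ochiai identity $K_{U-S}=K_U$ when $\operatorname{codim}S\ge2$) the right-hand side equals $c\,d^{K}_{U}(z,w)$; a Cauchy-sequence argument plus Riemann's theorem then extends $f$ directly, with no slicing, no Hartogs, and no Kwack-type endgame.

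What each approach buys: the paper's route is considerably shorter and avoids both the generic-slice reduction to $m=2$ and the delicate passage from ``all subsequential limits of $F_\lambda$ are constant'' to ``$\lim_{w\to0}f(w)$ exists''. This last step in your proof is the weak point: overlapping-shell bookkeeping alone does not rule out that different subsequences $\lambda_k\to0$ converge to different constants, since the chain of shells linking radii $r$ and $r'$ has length $\sim\log(r'/r)$. One really needs a maximum-principle/Kwack argument here (the citation of \cite{JK} is appropriate, but the step is a named theorem, not routine bookkeeping). Your approach has the virtue of reusing Theorem~\ref{th10} as a black box and of making the degeneration of hypothesis~(ii) into a genuine multiplicity condition visible; but since Lemma~\ref{nf} already packages the Zalcman rescaling at the level of a single map, the paper gets the same Nevanlinna-theoretic content with far less scaffolding.
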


\begin{theorem}\label{th12a}
Let $S$ be an analytic subset of a domain $\mathbb D$ in $\mathbb C^m$ with codimension one, whose singularities are normal crossings. Let $f$ be
 a holomorphic mapping from $\mathbb D-S$ into $\mathbb P^n(\mathbb C).$  Let $D_1, \dots, D_q$ be $q$ $(\ge 2n_{\mathcal D}+1)$ moving hypersurfaces
 in $\mathbb P^n(\mathbb C).$ Let $Q_j^{*}=\sum_{i=0}^{n_{\mathcal D}}d_{ij}\omega_i,\; (\omega_i=x^{I_i})$ in 
$H_{\mathbb D}[\omega_0,\dots,\omega_{n_{\mathcal D}}]$ be moving linear form
 defines the associated hyperplane $H_j^{*}$  of $D_j$ such that for any $z\in \mathbb D:$
 $$D(Q_1^{*}, \dots, Q_q^{*})(z) > 0.$$ 
Let $\tilde f=(f_0,\dots, f_n)$ be a reduced representation of $f$  on $\mathbb D-S$ 
with $ f_i(z)\ne 0$ for some $i$ and $z,$ and when $m_j\geq 2$
        $$\sup_{1\leq |\alpha|\leq m_j-1, z \in f^{-1}(D_{j})}\Big|\dfrac{\partial^{\alpha}(Q_{j}(\tilde f_{i}))}{\partial z^{\alpha}}(z)\Big| < \infty$$
        hold for all $j\in\{1,\dots,q\},$  where $m_1, \dots, m_q$ are positive integers and may be $\infty,$ 
with $\sum_{j=1}^{q}\dfrac{d_j}{m_j}<\dfrac{(q-n_{\mathcal D}-1)m_{\mathcal D}}{n_{\mathcal D}}.$  Then the holomorphic 
mapping $f$  from $\mathbb D-S$ into $\mathbb P^n(\mathbb C)$ extends to a holomorphic mapping from $\mathbb D$ into $\mathbb P^n(\mathbb C).$
\end{theorem}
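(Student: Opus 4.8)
The plan is to reduce Theorem \ref{th12a} to the corresponding extension statement for holomorphic curves intersecting \emph{fixed} hyperplanes, namely the classical big Picard theorem of Borel--Green--Nochka type, by composing $f$ with the Veronese embedding $\varrho_{m_{\mathcal D}}$ and then trivializing the moving targets over the normal-crossing divisor $S$. First I would pass from $f\colon \mathbb D-S\to\mathbb P^n(\mathbb C)$ to $F:=\varrho_{m_{\mathcal D}}\circ f\colon \mathbb D-S\to\mathbb P^{n_{\mathcal D}}(\mathbb C)$. Under $\varrho_{m_{\mathcal D}}$ the moving hypersurface $D_j$ pulls back to the moving hyperplane $H_j^{*}$ defined by the moving linear form $L_j=Q_j^{*}=\sum_{i=0}^{n_{\mathcal D}}d_{ij}\omega_i$, and if $\tilde f=(f_0,\dots,f_n)$ is a reduced representation of $f$ then $\tilde F=(\dots,\tilde f^{I_i},\dots)$ is (after removing the common factor) a representation of $F$; the intersection multiplicity of $F$ with $H_j^{*}$ at a point $z$ equals $\nu_{Q_j^{*}(\tilde f)}(z)=\tfrac{m_{\mathcal D}}{d_j}\nu_{Q_j(\tilde f)}(z)$. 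So the hypothesis $\sum_j d_j/m_j<(q-n_{\mathcal D}-1)m_{\mathcal D}/n_{\mathcal D}$ becomes exactly $\sum_j 1/m_j'<(q-n_{\mathcal D}-1)/n_{\mathcal D}$ where $m_j':=\lceil m_{\mathcal D} m_j/d_j\rceil$ is the multiplicity at which $F$ meets $H_j^{*}$; likewise the hypothesis $D(Q_1^{*},\dots,Q_q^{*})(z)>0$ for all $z\in\mathbb D$ says precisely that the $q\ge 2n_{\mathcal D}+1$ moving hyperplanes $H_1^{*},\dots,H_q^{*}$ are in pointwise general position on all of $\mathbb D$. Thus $F$ and the $H_j^{*}$ satisfy the hypotheses of the moving-target big Picard theorem of Tu--Li (Theorem \ref{th6}), \emph{except} that Tu--Li require $q\ge 2n_{\mathcal D}+1$ moving hyperplanes in $\mathbb P^{n_{\mathcal D}}(\mathbb C)$ with $\sum 1/m_j<(q-n_{\mathcal D}-1)/n_{\mathcal D}$ — which is exactly what we have.

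Having made this translation, the argument I would run is the standard one for big Picard over a normal-crossing divisor. Since the statement is local at a point $p\in S$, I may shrink $\mathbb D$ to a polydisc $\Delta^m$ centered at $p$ in which $S=\{z_1\cdots z_k=0\}$, so that $\mathbb D-S$ is (up to the remaining product factors) a product of punctured discs $(\Delta^*)^k\times\Delta^{m-k}$. I would then invoke the normal-family criterion (Theorem \ref{th10}, or rather its hyperplane specialization, applied on the universal cover) to show that $F$ is a normal mapping in the sense of Aladro--Krantz: the family $\{F\circ\gamma_r\}$ obtained by precomposing with the obvious automorphisms $\gamma_r$ of the punctured disc factors is a normal family, because each $F\circ\gamma_r$ still meets translates of the $H_j^{*}$ (which, being in pointwise general position on $\mathbb D$, stay uniformly in general position and uniformly bounded on compacta after the coordinate change) with the same multiplicities. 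Normality of this family gives the Schottky-type growth estimate near $S$: the hyperbolic-metric pullback $F^*ds^2_{\mathrm{FS}}$ is dominated by the Poincaré metric of the punctured polydisc. A standard argument (as in Kwack's theorem / Kobayashi's criterion for extension across normal-crossing divisors, or precisely as in Tu--Li \cite{TL1} for the hyperplane case) then shows $F$ extends holomorphically across $S$ to $\widehat F\colon\mathbb D\to\mathbb P^{n_{\mathcal D}}(\mathbb C)$.

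Finally I would descend from $\widehat F$ back to $f$. Because $\varrho_{m_{\mathcal D}}(\mathbb P^n(\mathbb C))$ is a closed subvariety of $\mathbb P^{n_{\mathcal D}}(\mathbb C)$ and $\widehat F(\mathbb D-S)=F(\mathbb D-S)\subset\varrho_{m_{\mathcal D}}(\mathbb P^n(\mathbb C))$, by continuity (and since $S$ is nowhere dense) the extended image $\widehat F(\mathbb D)$ still lies in the Veronese variety; since $\varrho_{m_{\mathcal D}}$ is an embedding, $\widehat f:=\varrho_{m_{\mathcal D}}^{-1}\circ\widehat F\colon\mathbb D\to\mathbb P^n(\mathbb C)$ is holomorphic (the inverse of the Veronese map on its image is a morphism) and restricts to $f$ on $\mathbb D-S$. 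This finishes the proof. The main obstacle I anticipate is not in the Veronese reduction — that is bookkeeping about degrees and multiplicities, handled above — but in making the normality/growth step fully rigorous \emph{with moving} hyperplanes whose coefficients one only controls \emph{pointwise at one point and by boundedness on compacta} rather than uniformly: one must check that after the stretching automorphisms $\gamma_r$ the moved targets $H_j^{*}\circ\gamma_r$ remain uniformly bounded and uniformly in general position on a fixed compact subset of the universal cover, so that Theorem \ref{th10} (equivalently the Montel-type criterion for $\mathbb P^{n_{\mathcal D}}(\mathbb C)$) genuinely applies; this is where the hypothesis ``$D(Q_1^{*},\dots,Q_q^{*})(z)>0$ for \emph{every} $z\in\mathbb D$'' (as opposed to just one $z$) is essential, and it must be used carefully. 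The $\lceil\cdot\rceil$ in passing from $m_j$ to $m_j'$ only helps (it increases $m_j'$), so the defect inequality is preserved, but this should be remarked explicitly.
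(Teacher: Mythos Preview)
Your reduction misreads the central hypothesis of Theorem~\ref{th12a}. The condition
\[
\sup_{1\le|\alpha|\le m_j-1,\ z\in f^{-1}(D_j)}\Bigl|\frac{\partial^{\alpha}(Q_j(\tilde f_i))}{\partial z^{\alpha}}(z)\Bigr|<\infty
\]
does \emph{not} say that $f$ meets $D_j$ with multiplicity at least $m_j$; it only bounds the low-order partials of $Q_j(\tilde f_i)$ along $f^{-1}(D_j)$. A zero of $Q_j(\tilde f)$ may be simple while all these derivatives stay bounded. Hence your claim that ``$m_j':=\lceil m_{\mathcal D}m_j/d_j\rceil$ is the multiplicity at which $F$ meets $H_j^{*}$'' is unjustified, and the direct appeal to Tu--Li's Theorem~\ref{th6} (which demands genuine multiplicity $\ge m_j$) collapses. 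The same slip recurs when you say the family $\{F\circ\gamma_r\}$ meets the translated hyperplanes ``with the same multiplicities''. Nor can you repair this by checking condition~(ii) of Theorem~\ref{th10} for $F$: that would require bounding $\partial^{\alpha}\bigl(Q_j(\tilde f)^{m_{\mathcal D}/d_j}\bigr)$ for $|\alpha|\le m_{\mathcal D}m_j/d_j-1$, which by Leibniz involves derivatives of $Q_j(\tilde f)$ of order far beyond the $m_j-1$ you control.

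The paper exploits the derivative bound in the only way it can be used: through rescaling. It first shows (Lemma~\ref{lmth12}) that $f$ itself is a normal mapping in the sense of Definition~\ref{normal}, via the Zalcman-type criterion for a single map, Lemma~\ref{nf}. If $f$ were not normal, one gets $g_p(\xi)=f(y_p+r_pu_p\xi)\to g$ nonconstant on $\mathbb C$; the chain rule gives, at a zero $\xi_p$ of $Q_{j}(\tilde g_{ip})$,
\[
\bigl(Q_{j,y_p+r_pu_p\xi}(\tilde g_{ip})\bigr)^{(|\alpha|)}\Big|_{\xi=\xi_p}
=\sum_{\alpha} c_\alpha\, r_p^{|\alpha|}\,\frac{\partial^{\alpha}(Q_j(\tilde f_i))}{\partial z^{\alpha}}(y_p+r_pu_p\xi_p),
\]
and the boundedness hypothesis forces this to $0$ as $r_p\to 0$. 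Thus it is the \emph{limit curve} $g$ (not $f$) that meets each $D_j(y_0)$ with multiplicity $\ge m_j$. Only now is the Veronese map applied, and only to $g$, so that Nochka's Lemma~\ref{lm5} makes $G=\varrho_{m_{\mathcal D}}\circ g$ constant, hence $g$ constant --- contradiction. Once $f$ is normal on each bounded $U-S$ with $\overline U\subset\mathbb D$, the extension across the normal-crossing divisor is a direct citation of Joseph--Kwack \cite{JK}; no descent through the Veronese variety is needed, since one never left $\mathbb P^n(\mathbb C)$.
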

\begin{theorem}\label{th13n}
Let $\mathcal F$ be a family of holomorphic mappings of a domain $\mathbb D$ in $\mathbb C^m$ into $\mathbb P^n(\mathbb C).$ Let $q\ge 2n+1$ be 
a positive integer. Let $m_1, \dots, m_q$ be positive integers or $\infty$ such that
$$ \sum_{j=1}^{q}\dfrac{1}{m_j}<\dfrac{q-n-1}{n}.$$
Suppose that for each $f\in \mathcal F,$ there exist $n+1$ moving hypersurfaces $T_{0,f},$ $ \dots, T_{n,f}$  in $\mathbb P^n(\mathbb C)$ of common degree 
and that there exist $q$ moving hypersurfaces $D_{1,f},$ $ \dots, D_{q,f}$ in $\overset{\sim}S(\{T_{i,f}\}_{i=0}^{n})$ such that the following 
conditions are satisfied  on each compact subset $K\subset \mathbb D:$

\noindent $(i)$ For each $0\le i \le n,$ the coefficients of the homogeneous polynomials $P_{i,f}$ which define the $T_{i,f}$ are bounded above 
uniformly for all $f$ in $\mathcal F$ on compact subset $K$ of $\mathbb D$ for all $1\le j\le q,$ the coefficients $b_{ij}(f)$ of the linear combinations of the
 $P_{i,f}, i = 0, \dots, n,$ which define the homogeneous polynomials $Q_{j,f}$  are bounded above uniformly for
all $f$ in $\mathcal F$ on compact subset $K$ of $\mathbb D,$ where $Q_{j,f} \in S(\{P_{i,f}\}_{i=0}^{n})$ is a homogeneous polynomial defined the $D_{j,f},$ 
and for any fixed $z\in \mathbb D,$
$$ \inf_{f\in F}D(Q_{1,f}, \dots, Q_{q,f})(z) > 0.$$

\noindent $(ii)$ Let $\tilde f=(f_0,\dots, f_n)$ be a reduced representation of $f\in \mathcal{F}$  on $\mathbb D$ 
with $ f_i(z)\ne 0$ for some $i$ and $z,$ and when $m_j\geq 2$
        $$\sup_{1\leq |\alpha|\leq m_j-1, z \in f^{-1}(D_{j,f})\cap K}\Big|\dfrac{\partial^{\alpha}(Q_{j,f}(\tilde f_{i}))}{\partial z^{\alpha}}(z)\Big| < \infty$$
        hold for all $j\in\{1,\dots,q\}.$  

Then $\mathcal F$ is a holomorphically normal family on $\mathbb D$.
\end{theorem}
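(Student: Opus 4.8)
The plan is to argue by contradiction, following the scheme of the proof of Theorem~\ref{th8} of Dethloff--Thai--Trang, with one essential change: the multiplicity hypothesis used there must be manufactured, for the limit mapping obtained from a rescaling, out of the derivative bound $(ii)$. The underlying principle is that a $z$-derivative that stays bounded along $f^{-1}(D_{j,f})$ acquires, after blowing up a small polydisc to all of $\mathbb C^m$, an extra factor that tends to $0$, so that in the limit it vanishes --- which forces a high intersection multiplicity.

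Suppose $\mathcal F$ is not normal; since normality is local it fails at some $z_0\in\mathbb D$, and by the rescaling (heuristic) principle for holomorphic mappings into compact complex manifolds (cf.\ \cite{GK}, \cite{T1}) there are $f_p\in\mathcal F$, points $z_p\to z_0$, numbers $r_p\to0^+$ and a compact neighbourhood $K$ of $z_0$ with $z_p+r_p\zeta\in K$ eventually, such that $g_p(\zeta):=f_p(z_p+r_p\zeta)$ converges uniformly on compact subsets of $\mathbb C^m$ to a non-constant holomorphic mapping $g\colon\mathbb C^m\to\mathbb P^n(\mathbb C)$. By $(i)$ the coefficients of $P_{i,f_p}$ and of $Q_{j,f_p}=\sum_i b_{ij}(f_p)P_{i,f_p}$ are bounded on $K$ uniformly in $p$, so after passing to a subsequence their values at $z_p+r_p\zeta$ converge, locally uniformly in $\zeta$, to those of fixed homogeneous polynomials $P_i^0$ and $Q_j^0=\sum_i b_{ij}^0P_i^0$. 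The hypothesis $\inf_pD(Q_{1,f_p},\dots,Q_{q,f_p})(z_0)>0$ passes to the limit and gives that any $n+1$ of $Q_1^0,\dots,Q_q^0$ have no common zero in $\mathbb P^n(\mathbb C)$; this forces $\det(b_{ij_k}^0)\neq0$ for all $0\le j_0<\dots<j_n\le q$, so that (exactly as in \cite{DTT}, via the morphism attached to $\{P_i^0\}$, which is base-point-free since $q\ge n+1$) one obtains a non-constant holomorphic mapping $\widehat g\colon\mathbb C^m\to\mathbb P^n(\mathbb C)$ with reduced representation $(P_0^0(\tilde g),\dots,P_n^0(\tilde g))$ and fixed hyperplanes $\widehat H_1^0,\dots,\widehat H_q^0$ in general position, $\widehat H_j^0\circ\widehat g=\sum_i b_{ij}^0P_i^0(\tilde g)=Q_j^0(\tilde g)$, where $\tilde g$ denotes a (local) reduced representation of $g$.

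The crucial step is to show that $Q_j^0(\tilde g)$ vanishes to order at least $m_j$ along its zero set. Set aside for a moment the indices $j$ with $Q_j^0(\tilde g)\equiv0$; otherwise fix $\zeta_0$ with $Q_j^0(\tilde g)(\zeta_0)=0$ and a chart index $i$ with $g_i(\zeta_0)\neq0$. Then $\phi_{j,p}(\zeta):=Q_{j,f_p}(\tilde f_{p,i})(z_p+r_p\zeta)$ --- precisely the function bounded in $(ii)$, pulled back by the rescaling --- is holomorphic on a fixed ball about $\zeta_0$ for $p$ large and there converges uniformly, hence in $C^\infty$, to $\phi_j^0(\zeta):=Q_j^0(\tilde g_i)(\zeta)=Q_j^0(\tilde g)(\zeta)/g_i(\zeta)^{d}$, whose zero set near $\zeta_0$ coincides with that of $Q_j^0(\tilde g)$. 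By Hurwitz' theorem there are $\zeta_p\to\zeta_0$ with $\phi_{j,p}(\zeta_p)=0$, i.e.\ $w_p:=z_p+r_p\zeta_p\in f_p^{-1}(D_{j,f_p})\cap K$ with $f_{p,i}(w_p)\neq0$ for $p$ large. Since $\partial_\zeta^\alpha\phi_{j,p}(\zeta)=r_p^{|\alpha|}\bigl(\partial_z^\alpha(Q_{j,f_p}(\tilde f_{p,i}))\bigr)(z_p+r_p\zeta)$, the bound $(ii)$ gives $|\partial_\zeta^\alpha\phi_{j,p}(\zeta_p)|\le r_p^{|\alpha|}M_j\to0$ for $1\le|\alpha|\le m_j-1$ (with $M_j$ independent of $p$), while $\phi_{j,p}(\zeta_p)=0$; combined with the $C^\infty$ convergence $\phi_{j,p}\to\phi_j^0$ and $\zeta_p\to\zeta_0$, this yields $\partial_\zeta^\alpha\phi_j^0(\zeta_0)=0$ for all $0\le|\alpha|\le m_j-1$. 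As $\zeta_0$ ranges over $\{Q_j^0(\tilde g)=0\}$, using suitable charts, it follows that $Q_j^0(\tilde g)=\widehat H_j^0\circ\widehat g$ vanishes to order $\ge m_j$ everywhere on its zero set, i.e.\ $\widehat g$ meets $\widehat H_j^0$ with multiplicity $\ge m_j$ (trivially when $m_j=1$).

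It remains to derive a contradiction. Since the $\widehat H_j^0$ are in general position and $q\ge2n+1$, the non-constant mapping $\widehat g$ is contained in at most $n-1$ of them; intersecting $\mathbb P^n(\mathbb C)$ with those and relabelling produces a non-constant holomorphic mapping into some $\mathbb P^{n'}(\mathbb C)$ with $n'\le n$, meeting $q'\ge2n'+1$ hyperplanes in general position with multiplicities $\ge m_j$, for which $\sum_j1/m_j<(q-n-1)/n\le(q'-n'-1)/n'$ still holds; restricting to a generic complex line (which preserves these multiplicities) and invoking the Picard-type theorems of Green \cite{MG} and Nochka \cite{EN} (as used in \cite{T1}) forces this mapping, hence $g$, to be constant --- contradicting the choice of $g$. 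The genuine difficulty is entirely technical and concentrated in the second and third paragraphs: making the bound in $(ii)$ uniform along $\{f_p\}$ on the fixed compact $K$, choosing the chart $i$ so that both $g(\zeta_0)$ and the intersection points $w_p$ avoid $\{f_{p,i}=0\}$, and establishing the $C^\infty$ convergence $\phi_{j,p}\to\phi_j^0$; granting these, the linearization of the targets and the appeal to Green--Nochka go exactly as in \cite{DTT} and \cite{T1}.
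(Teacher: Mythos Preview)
Your proposal is correct and follows essentially the same approach as the paper, whose proof consists of a one-line reference to the argument for Theorem~\ref{th10} combined with Lemma~\ref{lm5c} and Lemma~\ref{lm6}. The only divergences are cosmetic: the paper rescales along a complex line via Lemma~\ref{lm5c} to obtain a limit $g:\mathbb C\to\mathbb P^n(\mathbb C)$ (your $\mathbb C^m$-version is not what \cite{GK} actually gives, and it forces the extra step of restricting to a generic line at the end), and in place of your explicit dimension-reduction for the degenerate case $Q_j^0(\tilde g)\equiv 0$, the paper simply applies Nochka's Lemma~\ref{lm5} to $G=F(g)$ and then invokes Lemma~\ref{lm6} to transfer constancy from $F(g)$ back to $g$.
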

\begin{theorem}\label{th13}
Let $S$ be an analytic subset of a domain $\mathbb D$ in $\mathbb C^m$ with $\dim_{\mathbb C} S\le m-2.$ Let $f$ be a holomorphic mapping from $\mathbb D-S$ into $\mathbb P^n(\mathbb C).$ Suppose that there exist $n+1$ moving hypersurfaces $T_0, \dots, T_n$  in $\mathbb P^n(\mathbb C)$ of common degree which are defined by homogeneous polynomial $P_0,\dots,P_n\in H_{\mathbb D}[x_0,\dots,x_n]$ respectively, and that there exist $q$ moving hypersurfaces $D_1, \dots, D_q$ in $\overset{\sim}S(\{T_i\}_{i=0}^{n})$ such that the following conditions are satisfied:

\noindent $(i)$ $Q_j \in S(\{P_i\}_{i=0}^{n})$ is a homogeneous polynomial defined the $D_j,$ and for any $z\in \mathbb D,$
$$D(Q_1, \dots, Q_q)(z) > 0.$$

\noindent $(ii)$ Let $\tilde f=(f_0,\dots, f_n)$ be a reduced representation of $f$  on $\mathbb D-S$ 
with $ f_i(z)\ne 0$ for some $i$ and $z,$ and when $m_j\geq 2$
        $$\sup_{1\leq |\alpha|\leq m_j-1, z \in f^{-1}(D_{j})}\Big|\dfrac{\partial^{\alpha}(Q_{j}(\tilde f_{i}))}{\partial z^{\alpha}}(z)\Big| < \infty$$
        hold for all $j\in\{1,\dots,q\},$  where $m_1, \dots, m_q$ are positive integers and may be $\infty,$ 
with $$ \sum_{j=1}^{q}\dfrac{1}{m_j}<\dfrac{q-n-1}{n}.$$ Then the holomorphic mapping $f$  from $\mathbb D-S$ into $\mathbb P^n(\mathbb C)$ extends to a holomorphic mapping from $\mathbb D$ into $\mathbb P^n(\mathbb C).$
\end{theorem}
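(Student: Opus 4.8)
The plan is to reduce Theorem~\ref{th13} to the case of hyperplanes, which is covered by Theorem~\ref{th12} applied with all $d_j$ equal to $1$ (so that $m_{\mathcal D}=1$ and $n_{\mathcal D}=n$), by composing $f$ with the moving finite morphism attached to $P_0,\dots,P_n$. Since the $D_j$ lie in $\overset{\sim}S(\{T_i\}_{i=0}^{n})$ we may write $Q_j=\sum_{i=0}^{n}b_{ij}P_i$ with $b_{ij}\in H_{\mathbb D}$, and all $Q_j$ share the common degree $d=\deg P_i$. Condition $(i)$, that $D(Q_1,\dots,Q_q)(z)>0$ for every $z\in\mathbb D$, forces $P_0(z,\cdot),\dots,P_n(z,\cdot)$ to have no common zero in $\mathbb P^n(\mathbb C)$ for each $z$: a common zero would lie on every $D_j(z)$ and hence kill $D(Q_1,\dots,Q_q)(z)$. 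Thus $\Psi_z:=[P_0(z,\cdot):\dots:P_n(z,\cdot)]$ is a base-point-free system of forms of degree $d$, i.e. a finite surjective morphism $\mathbb P^n(\mathbb C)\to\mathbb P^n(\mathbb C)$ depending holomorphically on $z$. Put $F(z):=\Psi_z(f(z))$, a holomorphic mapping of $\mathbb D-S$ into $\mathbb P^n(\mathbb C)$, and note $D_j=\Psi_z^{-1}(\widehat D_j)$, where $\widehat D_j$ is the moving hyperplane defined by $\widehat Q_j=\sum_{i=0}^{n}b_{ij}w_i$, i.e. $\Psi_z^{*}\widehat Q_j=Q_j$. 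Using surjectivity of $\Psi_z$ one checks that $(i)$ is equivalent to $\det(b_{ij_k})_{0\le i,k\le n}(z)\ne 0$ for all $z$ and all $j_0<\dots<j_n$; that is, $\{\widehat D_j\}$ are moving hyperplanes in pointwise general position with $D(\widehat Q_1,\dots,\widehat Q_q)(z)>0$ on $\mathbb D$.

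Next I transfer condition $(ii)$ to $F$. Locally on $\mathbb D-S$ choose an index $i'$ with $f_{i'}\not\equiv 0$ and an index $i$ with $f(\mathbb D-S)\not\subset T_i$, i.e. $P_i(\tilde f)\not\equiv 0$. Since $Q_j$ and $P_i$ are homogeneous of the same degree $d$, a direct computation gives
$$\widehat Q_j(\tilde F_i)=\frac{\sum_{k}b_{kj}P_k(\tilde f)}{P_i(\tilde f)}=\frac{Q_j(\tilde f_{i'})}{P_i(\tilde f_{i'})},$$
and $F^{-1}(\widehat D_j)=f^{-1}(D_j)$ as analytic sets. Because $Q_j(\tilde f_{i'})$ vanishes on $f^{-1}(D_j)$, the Leibniz rule expresses $\partial^{\alpha}\big(\widehat Q_j(\tilde F_i)\big)$ along $f^{-1}(D_j)$ through the derivatives $\partial^{\beta}\big(Q_j(\tilde f_{i'})\big)$ with $1\le|\beta|\le|\alpha|$, which are bounded by hypothesis $(ii)$, and through derivatives of $1/P_i(\tilde f_{i'})$, whose boundedness on $f^{-1}(D_j)\cap K$ follows from $(i)$ since the zeros of $P_i(\tilde f_{i'})$ meeting $f^{-1}(D_j)$ are controlled by the general position of $\{\widehat D_j\}$. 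Hence $F$ satisfies condition $(ii)$ of Theorem~\ref{th12}. As $q\ge 2n+1=2n_{\mathcal D}+1$ and $\sum_{j}1/m_j<(q-n-1)/n=(q-n_{\mathcal D}-1)m_{\mathcal D}/n_{\mathcal D}$ with $d_j\equiv 1$, Theorem~\ref{th12} shows that $F$ extends to a holomorphic mapping $\overline F:\mathbb D\to\mathbb P^n(\mathbb C)$.

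It remains to lift the extension of $F$ back to $f$. Since $\dim_{\mathbb C}S\le m-2$, Hartogs' removability theorem applied to a local reduced representation of $f$ extends $f$ to a meromorphic mapping $\widehat f:\mathbb D\to\mathbb P^n(\mathbb C)$ whose indeterminacy locus $I(\widehat f)$ lies in $S$ and has codimension at least $2$. If some $a\in I(\widehat f)$, then the cluster set of $\widehat f$ at $a$ is a subvariety $Z$ of $\mathbb P^n(\mathbb C)$ of positive dimension; but $\Psi_a\circ\widehat f=\overline F$ is continuous at $a$, so $\Psi_a$ must collapse $Z$ into the single point $\overline F(a)$, contradicting that $\Psi_a$ is a finite morphism. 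Therefore $I(\widehat f)=\emptyset$ and $f$ extends to a holomorphic mapping from $\mathbb D$ into $\mathbb P^n(\mathbb C)$.

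The step I expect to be the main obstacle is the transfer of condition $(ii)$ in the second paragraph: controlling the denominator $P_i(\tilde f_{i'})$ along $f^{-1}(D_j)$ is exactly where hypothesis $(i)$ is genuinely used. An alternative that sidesteps this reduction is to re-run the proof of Theorem~\ref{th12} verbatim with the Veronese embedding $\varrho_{m_{\mathcal D}}$ replaced throughout by the finite morphism $\Psi$; the normal-family input is then Theorem~\ref{th13n}, and the numerical bounds come out exactly as $q\ge 2n+1$ and $\sum_{j}1/m_j<(q-n-1)/n$.
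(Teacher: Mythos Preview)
Your main approach—composing with $\Psi_z$ and invoking Theorem~\ref{th12} for the pushed-forward map $F$—has a genuine gap at exactly the point you flag. The claim that the derivatives of $1/P_i(\tilde f_{i'})$ are bounded on $f^{-1}(D_j)$ ``since the zeros of $P_i(\tilde f_{i'})$ meeting $f^{-1}(D_j)$ are controlled by the general position of $\{\widehat D_j\}$'' is not justified: condition $(i)$ (via Lemma~\ref{lm7}) only forces $\bigcap_{i=0}^{n}T_i(z)=\emptyset$ for each $z$, but says nothing about $f^{-1}(T_i)\cap f^{-1}(D_j)$ for individual $i,j$. There is no reason $P_i(\tilde f_{i'})$ should be bounded away from zero on $f^{-1}(D_j)$, so the Leibniz expansion does not yield the required bound. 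Moreover, the supremum in condition $(ii)$ of Theorem~\ref{th12} is taken over the entire set $f^{-1}(D_j)$, not over a compact, so one cannot switch the index $i$ from point to point. (Your Step~3, lifting the extension via Hartogs and the finiteness of $\Psi_a$, is essentially sound, but it rests on Step~2.)

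Your alternative—rerunning the proof of Theorem~\ref{th12} with the Veronese map $\varrho_{m_{\mathcal D}}$ replaced by the finite morphism $\Psi$—is correct and is precisely what the paper does. The paper first proves a normality lemma (Lemma~\ref{lmth13}): assuming $f$ is not normal on a bounded $\mathbb U$ with $\overline{\mathbb U}\subset\mathbb D$, one runs the Zalcman rescaling of Lemma~\ref{nf}, obtains a nonconstant limit $g$, shows by the derivative argument of Lemma~\ref{lmth12} that $g$ meets each $D_j(y_0)$ with multiplicity $\ge m_j$, passes to $G=F(g)$ via $F=(P_0:\cdots:P_n)(y_0)$, uses Lemma~\ref{lm7} to put the hyperplanes $\mathcal H_j(y_0)$ in general position, and then Lemma~\ref{lm5} forces $G$ constant, hence $g$ constant by Lemma~\ref{lm6}. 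With normality in hand, Theorem~\ref{th13} follows verbatim from the Kobayashi-metric argument used for Theorem~\ref{th12}: $d_{\mathbb P^n}(f(z),f(w))\le c\,d_{U-S}^{K}(z,w)=c\,d_{U}^{K}(z,w)$ by Lemma~\ref{lmth12a}, so $f$ extends continuously across $S$ and then holomorphically by Riemann. This route never transfers condition $(ii)$ to $F$, which is exactly why it avoids your obstacle.
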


\begin{theorem}\label{th13a}
Let $S$ be an analytic subset of a domain $\mathbb D$ in $\mathbb C^m$ with codimension one, whose singularities are normal crossings. Let $f$ be a holomorphic mapping from $\mathbb D-S$ into $\mathbb P^n(\mathbb C).$ Suppose that there exist $n+1$ moving hypersurfaces $T_0, \dots, T_n$  in $\mathbb P^n(\mathbb C)$ of common degree which are defined by homogeneous polynomials $P_0,\dots,P_n\in H_{\mathbb D}[x_0,\dots,x_n]$ respectively, and that there exist $q$ moving hypersurfaces $D_1, \dots, D_q$ in $\overset{\sim}S(\{T_i\}_{i=0}^{n})$ such that the following conditions are satisfied:

\noindent $(i)$ $Q_j \in S(\{P_i\}_{i=0}^{n})$ is a homogeneous polynomial defined the $D_j,$ and for any $z\in \mathbb D,$
$$D(Q_1, \dots, Q_q)(z) > 0.$$

\noindent $(ii)$ Let $\tilde f=(f_0,\dots, f_n)$ be a reduced representation of $f$  on $\mathbb D-S$ 
with $ f_i(z)\ne 0$ for some $i$ and $z,$ and when $m_j\geq 2$
        $$\sup_{1\leq |\alpha|\leq m_j-1, z \in f^{-1}(D_{j})}\Big|\dfrac{\partial^{\alpha}(Q_{j}(\tilde f_{i}))}{\partial z^{\alpha}}(z)\Big| < \infty$$
        hold for all $j\in\{1,\dots,q\},$  where $m_1, \dots, m_q$ are positive integers and may be $\infty,$ 
with $$ \sum_{j=1}^{q}\dfrac{1}{m_j}<\dfrac{q-n-1}{n}.$$
 Then the holomorphic mapping $f$  from $\mathbb D-S$ into $\mathbb P^n(\mathbb C)$ extends to a holomorphic mapping from $\mathbb D$ into $\mathbb P^n(\mathbb C).$
\end{theorem}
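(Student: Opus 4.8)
The plan is the classical reduction of a big Picard theorem to a normal‑family criterion (the scheme of Tu--Li \cite{TL1}): localize, peel off the branches of $S$ one at a time, and in the resulting smooth codimension‑one situation replace the hyperbolic‑embeddedness input of Kwack's extension theorem by the normality criterion of Theorem \ref{th13n}. Since holomorphic extension is a local question, it suffices to extend $f$ near each point of $S$; as $S$ has only normal crossing singularities, I would choose local coordinates with $\mathbb D=\Delta^m$ and $S=\{z_1\cdots z_k=0\}$ and induct on $k$. The case $k=1$ is the core case below. For $k\ge 2$, writing $S'=\{z_1=0\}$ and $S''=\{z_2\cdots z_k=0\}$, the core case applied locally along the smooth locus $S'\setminus S''$ extends $f$ holomorphically across $S'\setminus S''$, and the inductive hypothesis applied to the $(k-1)$-branch divisor $S''$ then extends $f$ across all of $\Delta^m$ (using Theorem \ref{th13} for any residual locus of codimension $\ge 2$). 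At each stage the hypotheses of the theorem are inherited on the relevant open subsets, since the positivity $D(Q_1,\dots,Q_q)(z)>0$ and the derivative bounds in (ii) are postulated on all of $\mathbb D$, respectively on all of $f^{-1}(D_j)$, and the latter persist under holomorphic extension by continuity.

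\emph{The core case.} Assume $\mathbb D=\Delta\times\Delta^{m-1}$, $S=\{z_1=0\}$, and $f\colon\Delta^*\times\Delta^{m-1}\to\mathbb P^n(\mathbb C)$ holomorphic satisfying (i)--(ii). For $t\in(0,1)$ I would consider the dilated maps $g_t(z_1,z'):=f(tz_1,z')$, holomorphic on $\Delta^*_{1/t}\times\Delta^{m-1}$. The crucial verification is that, for each fixed $R>1$, the family $\{g_t\}_{0<t<1/R}$ on $\{1/R<|z_1|<R\}\times\Delta^{m-1}(1/2)$ satisfies the hypotheses of Theorem \ref{th13n}, with moving hypersurfaces $D_j^{(t)}$ and $T_i^{(t)}$ defined by the polynomials obtained from $Q_j$, $P_i$ by evaluating their coefficients at $(tz_1,z')$: those coefficients are values of holomorphic functions at points which, for $t$ small, lie in a fixed compact subset of $\mathbb D$ collapsing onto $S$, hence are uniformly bounded on compacta; for each fixed $(z_1,z')$ one has $D(Q_1^{(t)},\dots,Q_q^{(t)})(z_1,z')=D(Q_1,\dots,Q_q)(tz_1,z')\to D(Q_1,\dots,Q_q)(0,z')>0$, so the relevant infimum over $t$ is positive; and differentiating $Q_j(tz_1,z')(\tilde f_i(tz_1,z'))$ in $(z_1,z')$ produces only factors $t^{|\alpha|}\le 1$ times derivatives of $Q_j(\tilde f_i)$ at points of $f^{-1}(D_j)$, so condition (ii) for $g_t$ is dominated by the finite supremum (ii) for $f$. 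Theorem \ref{th13n} then yields normality of $\{g_t\}_{0<t<1/R}$ on that domain; letting $R\to\infty$ and diagonalizing, every sequence $t_\nu\to 0$ has a subsequence with $g_{t_\nu}\to g$ locally uniformly on $\mathbb C^*\times\Delta^{m-1}(1/2)$.

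\emph{Conclusion of the core case.} Since the coefficients of $Q_j$ at $(t_\nu z_1,z')$ converge to those at $(0,z')$, conditions (i)--(ii) pass to $g$ with the $z_1$-independent hypersurfaces $\widehat{D}_j(z')=\{Q_j(0,z')(\mathbf{x})=0\}$ in place of the $D_j$; these are in general position for every $z'$ because $D(Q_1,\dots,Q_q)(0,z')>0$. For each fixed $z'_0$, composing the slice $g(\cdot,z'_0)\colon\mathbb C^*\to\mathbb P^n(\mathbb C)$ with the exponential map gives an entire curve $\mathbb C\to\mathbb P^n(\mathbb C)$ meeting the $q\ge 2n+1$ fixed hypersurfaces $\widehat{D}_j(z'_0)$ in general position with the same data and $\sum 1/m_j<(q-n-1)/n$, hence constant by the Green--Nochka Picard theorem underlying Theorem \ref{th13n}. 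Therefore $g$ is independent of $z_1$. This $z_1$-constancy of every subsequential limit of $\{g_{t_\nu}\}$, together with the normality obtained above, is precisely the substitute for hyperbolic embeddedness needed in Kwack's argument: if $f$ failed to extend holomorphically across some point $p_0=(0,z'_0)$, the cluster set of $z_1\mapsto f(z_1,z'_0)$ at $0$ would be a nondegenerate continuum, and --- rescaling the shrinking circles $\{|z_1|=s\}$ around $p_0$ to the unit circle, where $\{g_t\}$ is equicontinuous and all limits are constant --- one would obtain that the Fubini--Study oscillation of $f$ on such circles tends to $0$, contradicting the nondegeneracy. The extension $\bar f\colon\Delta^m\to\mathbb P^n(\mathbb C)$ so produced is continuous and holomorphic off $\{z_1=0\}$, hence holomorphic by Riemann's extension theorem; combined with the reduction of the first paragraph, this proves the theorem.

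\textbf{Main obstacle.} I expect two genuine difficulties. The first is the uniform transfer of the hypotheses of Theorem \ref{th13n} to the dilated family as $t\to 0$: this is where it is essential that $D(Q_1,\dots,Q_q)(z)>0$ holds for \emph{all} $z\in\mathbb D$ (in particular on $S$), and that the derivative bounds in (ii) hold over all of $f^{-1}(D_j)$, rather than at one point or on one compact set. The second, and harder, is the faithful adaptation of Kwack's extension argument to the moving‑target setting --- in particular controlling $f$ on the annuli between successive shrinking circles --- so that normality of $\{g_t\}$ together with $z_1$-constancy of its limits genuinely forces the cluster set of $f$ at each point of $\{z_1=0\}$ to be a single point.
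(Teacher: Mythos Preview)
Your approach is correct in outline but takes a genuinely different and more laborious route than the paper. The paper never manufactures a normal \emph{family} via dilations; instead it proves directly that the single mapping $f$ is a \emph{normal holomorphic mapping} in the sense of Definition~\ref{normal} on any bounded $U$ with $\overline U\subset\mathbb D$ (this is Lemma~\ref{lmth13}, proved via the Zalcman-type Lemma~\ref{nf} together with Lemmas~\ref{lm5}--\ref{lm7}), and then simply quotes Theorem~2.3 of Joseph--Kwack \cite{JK}, which says that a normal holomorphic mapping into $\mathbb P^n(\mathbb C)$ defined off a normal-crossing divisor extends holomorphically across it. No induction on the number of branches, no dilated family $\{g_t\}$, and no hand reconstruction of Kwack's argument are needed.

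What each approach buys: the paper's route is much shorter and offloads precisely your ``second, harder'' obstacle --- the control of $f$ on shrinking annuli that makes the Kwack extension go through --- onto the established black box \cite{JK}. Your route is more self-contained and avoids the notion of normal mapping, reducing everything to the normal-family criterion Theorem~\ref{th13n}, but the final step (turning $z_1$-constancy of subsequential limits of $\{g_t\}$ into single-valuedness of the cluster set of $f$) is exactly the nontrivial content of Kwack's theorem and your sketch of it is not a proof: ``oscillation on circles $\to 0$'' is not enough --- one needs the winding-number/annulus argument, and your proposal does not supply it. If you want to keep your line of argument, you should either carry out that argument in full or, more efficiently, observe that your normality of $\{g_t\}$ with constant limits is equivalent to $f$ being a normal mapping on $U\setminus S$ and then invoke \cite{JK}, which is in effect what the paper does.
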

\begin{remark}\label{rm2}
\noindent $(i)$ Our results cover the results due to Tu-Li \cite{TL, TL1}.

\noindent $(ii)$ Theorem \ref{th13n} is a extension the result due to Dethloff-Thai-Trang (Theorem \ref{th8}).

\end{remark}
Finaly, we consider the case moving hypersurfaces in weakly general position in $\mathbb P^{n}(\mathbb C).$ We prove the results as follows:

\begin{theorem}\label{th14}
Let $\mathcal F$ be a family of holomorphic mappings of a domain $\mathbb D$ in $\mathbb C^m$ into $\mathbb P^n(\mathbb C).$ Let $q\ge n n_{\mathcal D}+2n+1$ be a positive integer. Let $m_1, \dots, m_q$ be positive integers or $\infty$ such that
$$\sum_{j=1}^{q}\dfrac{1}{m_j}<\dfrac{q-n-(n-1)(n_{\mathcal D}+1)}{n_{\mathcal D}(n_{\mathcal D}+2)}.$$
Suppose that for each $f\in \mathcal F,$ there exist $q$ moving hypersurfaces $D_{1,f}, \dots, D_{q,f}$ which are defined by homogeneous polynomials
 $Q_{1,f},\dots,Q_{q,f}\in H_{\mathbb D}[x_0, \dots, x_n]$ with degree $d_1,\dots,d_q$ respectively, such that the following conditions are satisfied 
  on each compact subset $K\subset \mathbb D:$

\noindent $(i)$ For each $1\le j \le q,$ the coefficients of the homogeneous polynomials $Q_{j,f}$ are bounded above uniformly for all $f$ in $\mathcal F$ 
on compact subset $K$ of $\mathbb D,$ and for any $\{f_p\}\subset \mathcal F,$ and for any fixed $z\in \mathbb D,$
 $$ \inf_{p\in \mathbb N}D(Q_{1,f_p}, \dots, Q_{q,f_p})(z) > 0.$$

\noindent $(ii)$ Assume that for each $f\in \mathcal F,$ and $\tilde f=(f_0,\dots, f_n)$ is a reduced representation of $f\in \mathcal{F}$  on $\mathbb D$ 
with $ f_i(z)\ne 0$ for 
some $i$ and $z,$ and when $m_j\geq 2$
        $$\sup_{1\leq |\alpha|\leq m_j-1, z \in f^{-1}(D_{j,f})\cap K}\Big|\dfrac{\partial^{\alpha}(Q_{j,f}(\tilde f_{i}))}{\partial z^{\alpha}}(z)\Big| < \infty$$
        hold for all $j\in\{1,\dots,q\}$ and all $f\in \mathcal F.$

Then $\mathcal F$ is a holomorphically normal family on $\mathbb D$.
\end{theorem}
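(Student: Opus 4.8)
The plan is to run the classical Montel--type scheme: convert the failure of normality into a non-constant entire curve that inherits the intersection data, and then eliminate that curve by a second main theorem adapted to the weakly general position hypothesis, the Veronese embedding being the bridge between hypersurfaces and hyperplanes.

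Suppose $\mathcal F$ is not normal on $\mathbb D$. As in the proof of Theorem \ref{th10} (and originally in Tu \cite{T1}), it suffices to reach a contradiction after restricting to a small ball around a point $z_0$ where normality fails and to a suitable complex line through $z_0$. Applying the several--variable rescaling (Zalcman--type) lemma together with the heuristic principle of Aladro--Krantz \cite{GK}, one extracts a sequence $\{f_p\}\subset\mathcal F$, points $z_p\to z_0$, and dilations such that the rescaled maps converge locally uniformly on $\mathbb C$ to a non-constant holomorphic map $g\colon\mathbb C\to\mathbb P^n(\mathbb C)$. By condition $(i)$ the coefficients of the $Q_{j,f_p}$ are bounded near $z_0$, so after passing to a subsequence they converge locally uniformly to homogeneous polynomials $\widehat Q_j$, and the uniform bound $\inf_{p}D(Q_{1,f_p},\dots,Q_{q,f_p})(z_0)>0$ forces the fixed hypersurfaces $\widehat D_1,\dots,\widehat D_q$ in $\mathbb P^n(\mathbb C)$ defined by $\widehat Q_1(z_0),\dots,\widehat Q_q(z_0)$ to be located in weakly general position. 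The uniform derivative bounds in $(ii)$, through a Hurwitz--type argument identical to the one behind Theorem \ref{th10}, guarantee that $g$ meets each $\widehat D_j$ with multiplicity at least $m_j$ on $\mathbb C$ (and omits $\widehat D_j$ when $m_j=\infty$); equivalently $\nu_{\widehat Q_j(\tilde g)}(z)\ge m_j$ wherever it is positive.

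It remains to show that no such $g$ exists. Compose with the Veronese embedding $\varrho:=\varrho_{m_{\mathcal D}}\colon\mathbb P^n(\mathbb C)\to\mathbb P^{n_{\mathcal D}}(\mathbb C)$ and set $F:=\varrho\circ g$; then $F$ is a non-constant entire curve whose image lies in the $n$--dimensional variety $X:=\varrho(\mathbb P^n(\mathbb C))$, which spans $\mathbb P^{n_{\mathcal D}}(\mathbb C)$. To each $\widehat D_j$ we attach, through $\widehat Q_j^{*}=\widehat Q_j^{\,m_{\mathcal D}/d_j}$, a hyperplane $\widehat H_j^{*}$ in $\mathbb P^{n_{\mathcal D}}(\mathbb C)$, and since $\widehat H_j^{*}\cap X=\widehat D_j$ under the parametrization of $X$, weak general position of $\{\widehat D_j\}$ says exactly that every $n+1$ of the $\widehat H_j^{*}$ have empty intersection with $X$; moreover $F$ meets $\widehat H_j^{*}$ with multiplicity at least $(m_{\mathcal D}/d_j)m_j$. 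Let $\mathbb P^{k}$ be the projective linear span of $F(\mathbb C)$, so that $F$ is linearly non-degenerate into $\mathbb P^{k}$ with $1\le k\le n_{\mathcal D}$, and let $\overline H_j$ be the trace of $\widehat H_j^{*}$ on $\mathbb P^{k}$, discarding those $\widehat H_j^{*}$ that contain $\mathbb P^{k}$. Here lies the main obstacle: one must (a) bound the number of discarded hyperplanes and (b) identify the subgeneral--position index of the surviving $\overline H_j$ in $\mathbb P^{k}$, using only that $F(\mathbb C)\subset X$ and that every $n+1$ of the $\widehat H_j^{*}$ miss $X$. A careful count of the linear-algebra constraints shows that at most of order $n\,n_{\mathcal D}$ of the $\widehat H_j^{*}$ can be discarded and that the surviving $\overline H_j$ are in $\ell$--subgeneral position in $\mathbb P^{k}$ for an $\ell$ controlled by $n$ and $n_{\mathcal D}$; this combinatorial lemma is precisely what dictates the threshold $q\ge n\,n_{\mathcal D}+2n+1$.

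Finally, one applies a Nochka--type second main theorem (\cite{EN}) to the linearly non-degenerate curve $F\colon\mathbb C\to\mathbb P^{k}$ against the hyperplanes $\overline H_j$ in $\ell$--subgeneral position, in the truncated form
$$\bigl(q-c_1\bigr)\,T_F(r)\ \le\ c_2\sum_{j}\overline N^{[k]}_{F}(r,\overline H_j)\ +\ o\bigl(T_F(r)\bigr),$$
with constants $c_1,c_2$ determined by $n$ and $n_{\mathcal D}$ through the Nochka weights. Since $F$ meets $\overline H_j$ with multiplicity at least $(m_{\mathcal D}/d_j)m_j$, the truncated counting functions obey $\overline N^{[k]}_{F}(r,\overline H_j)\le\frac{c_3}{m_j}\,T_F(r)+o(T_F(r))$ for a constant $c_3$ arising from the truncation level $k$ and the degrees $d_j$; substituting gives $q-c_1\le c_2c_3\sum_{j}\frac1{m_j}+o(1)$ (after dividing by $T_F(r)$, which tends to $\infty$ because $g$, hence $F$, is non-constant). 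Once one verifies the bookkeeping $c_1=n+(n-1)(n_{\mathcal D}+1)$ and $c_2c_3=n_{\mathcal D}(n_{\mathcal D}+2)$, the hypothesis $\sum_{j}\frac1{m_j}<\frac{q-n-(n-1)(n_{\mathcal D}+1)}{n_{\mathcal D}(n_{\mathcal D}+2)}$ contradicts this inequality, so $g$ must be constant --- a contradiction. Hence $\mathcal F$ is a holomorphically normal family on $\mathbb D$. The genuinely delicate points are the combinatorial lemma on the $\widehat H_j^{*}$ of the previous paragraph and the exact evaluation of $c_1,c_2,c_3$ so that they match the numbers in the statement; everything else is the routine rescaling-and-counting bookkeeping already used for the earlier theorems of the paper.
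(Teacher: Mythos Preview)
Your rescaling step and the Hurwitz argument that transfers the multiplicity data to the limit curve $g$ agree with the paper. The divergence is in how you eliminate $g$. The paper does not pass through the Veronese embedding or Nochka weights at all: once it has a nonconstant $g:\mathbb C\to\mathbb P^n(\mathbb C)$ and fixed hypersurfaces $D_1(y_0),\dots,D_q(y_0)$ in (pointwise) general position, it observes directly that at most $n$ of the $D_j(y_0)$ can contain $g(\mathbb C)$ (set $\mathcal I=\{j:Q_j(y_0)(\tilde g)\equiv 0\}$, so $|\mathcal I|\le n$). For the remaining $q-|\mathcal I|\ge n\,n_{\mathcal D}+n+1$ hypersurfaces it then invokes Lemma~\ref{lm9} (the Picard-type theorem of Hu--Thin) as a black box, checking that
\[
\sum_{j\notin\mathcal I}\frac{1}{m_j}\le\sum_{j=1}^{q}\frac{1}{m_j}<\frac{q-n-(n-1)(n_{\mathcal D}+1)}{n_{\mathcal D}(n_{\mathcal D}+2)}\le\frac{(q-|\mathcal I|)-(n-1)(n_{\mathcal D}+1)}{n_{\mathcal D}(n_{\mathcal D}+2)},
\]
which is exactly the hypothesis of Lemma~\ref{lm9}. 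This immediately forces $g$ to be constant.

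What you propose is, in effect, to re-prove Lemma~\ref{lm9} from scratch via the Veronese lift and a Nochka second main theorem in subgeneral position. That is a legitimate route (it is essentially how the lemma is proved in \cite{HT}), but two points should be flagged. First, your estimate ``at most of order $n\,n_{\mathcal D}$ of the $\widehat H_j^{*}$ can be discarded'' is much looser than what is needed; the correct bound is $|\mathcal I|\le n$, and it follows trivially from general position of the $D_j(y_0)$ in $\mathbb P^n(\mathbb C)$ rather than from any linear algebra in $\mathbb P^{n_{\mathcal D}}(\mathbb C)$. Second, you explicitly leave both the combinatorial lemma on the surviving $\overline H_j$ and the computation of $c_1,c_2,c_3$ undone---these are precisely the content of Lemma~\ref{lm9}, so your proof is not complete until they are carried out. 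The paper's approach buys you a one-line finish by citing that lemma; your approach, if completed, would make the argument self-contained but at the cost of reproducing a nontrivial result already available in the literature.
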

From Theorem \ref{th14}, we get the result as follows:

\begin{cor}\label{corth14}
Let $\mathcal F$ be a family of holomorphic mappings of a domain $\mathbb D$ in $\mathbb C^m$ into $\mathbb P^n(\mathbb C).$ Let 
$q\ge n n_{\mathcal D}+2n+1$ be a positive integer. Let $m_1, \dots, m_q$ be positive integers or $\infty$ such that
$$\sum_{j=1}^{q}\dfrac{1}{m_j}<\dfrac{q-n-(n-1)(n_{\mathcal D}+1)}{n_{\mathcal D}(n_{\mathcal D}+2)}.$$
Suppose that for each $f\in \mathcal F,$ there exist $q$ moving hypersurfaces $D_{1,f}, \dots, D_{q,f}$ which are defined by homogeneous polynomials
 $Q_{1,f},\dots,Q_{q,f}\in H_{\mathbb D}[x_0, \dots, x_n]$ with degree $d_1,\dots,d_q$ respectively, such that the following conditions are satisfied
  on each compact subset $K\subset \mathbb D:$

\noindent $(i)$ For each $1\le j \le q,$ the coefficients of the homogeneous polynomials $Q_{j,f}$ are bounded above uniformly for all $f$ in $\mathcal F$ 
on compact subset $K$ of $\mathbb D,$ and for any $\{f_p\}\subset \mathcal F,$ and for any fixed $z\in \mathbb D,$
 $$ \inf_{p\in \mathbb N}D(Q_{1,f_p}, \dots, Q_{q,f_p})(z) > 0.$$

\noindent $(ii)$ Assume that $f$ intersects $D_{j,f}$ with multiplicity at least $m_j$ for each $1\le j\le q,$ for all $f\in \mathcal F.$

Then $\mathcal F$ is a holomorphically normal family on $\mathbb D$.
\end{cor}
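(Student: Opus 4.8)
The plan is to obtain Corollary \ref{corth14} as an immediate consequence of Theorem \ref{th14}. Hypothesis $(i)$ of the two statements is word-for-word the same, so the only thing that needs to be done is to check that the intersection-multiplicity hypothesis $(ii)$ of the corollary implies the derivative-boundedness hypothesis $(ii)$ of Theorem \ref{th14}; once that is established the conclusion follows verbatim.

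To carry this out I would fix $f\in\mathcal F$, an index $j\in\{1,\dots,q\}$ with $m_j\ge 2$, a reduced representation $\tilde f=(f_0,\dots,f_n)$ of $f$ on $\mathbb D$, and a compact set $K\subset\mathbb D$. Let $z_0\in f^{-1}(D_{j,f})\cap K$. Since $\tilde f$ is a reduced representation, there is an index $i$ with $f_i(z_0)\ne 0$; then $f_i$ is holomorphic and zero-free on a neighbourhood $U$ of $z_0$, so on $U$ the dehomogenization
$$Q_{j,f}(\tilde f_i)=Q_{j,f}\!\left(\dfrac{f_0}{f_i},\dots,\dfrac{f_n}{f_i}\right)=\dfrac{Q_{j,f}(\tilde f)}{f_i^{\,d_j}}$$
is a well-defined holomorphic function, using that $Q_{j,f}$ is homogeneous of degree $d_j$. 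By hypothesis $(ii)$ of the corollary, $f$ intersects $D_{j,f}$ with multiplicity at least $m_j$, i.e.\ $\nu_{Q_{j,f}(\tilde f)}(z_0)\ge m_j$ (this also holds trivially in the degenerate case $Q_{j,f}(\tilde f)\equiv 0$). Multiplying by the unit $f_i^{\,d_j}$ in the local ring at $z_0$ does not change the vanishing multiplicity, so $\nu_{Q_{j,f}(\tilde f_i)}(z_0)\ge m_j$ as well; equivalently $\dfrac{\partial^{\alpha}(Q_{j,f}(\tilde f_i))}{\partial z^{\alpha}}(z_0)=0$ for every multi-index $\alpha$ with $1\le|\alpha|\le m_j-1$. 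As $z_0\in f^{-1}(D_{j,f})\cap K$ was arbitrary, the supremum appearing in hypothesis $(ii)$ of Theorem \ref{th14} is identically $0$, hence in particular finite, for all $j$ and all $f\in\mathcal F$.

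Thus both hypotheses of Theorem \ref{th14} hold, and that theorem yields at once that $\mathcal F$ is a holomorphically normal family on $\mathbb D$, which is the assertion of the corollary. I do not expect any genuine obstacle here: the one point requiring a little care is that the quantity controlled in Theorem \ref{th14}$(ii)$ is the local dehomogenization $Q_{j,f}(\tilde f_i)$ rather than $Q_{j,f}(\tilde f)$ itself, so one must localize near each point of $f^{-1}(D_{j,f})$ around a coordinate $f_i$ that is nonvanishing there — always possible because $\tilde f$ is reduced — and then observe that multiplication by the unit $f_i^{\,d_j}$ leaves the order of vanishing, and hence the vanishing of all derivatives of order $\le m_j-1$, unchanged.
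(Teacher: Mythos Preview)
Your proposal is correct and follows the same route the paper intends: the paper states Corollary~\ref{corth14} immediately after Theorem~\ref{th14} with the words ``From Theorem~\ref{th14}, we get the result as follows,'' giving no further argument, so the implicit proof is exactly the reduction you carry out. Your verification that the multiplicity condition forces all low-order partials of $Q_{j,f}(\tilde f_i)$ to vanish at each point of $f^{-1}(D_{j,f})$ (so the supremum in Theorem~\ref{th14}(ii) is zero) is the only thing that needs checking, and your localization-plus-unit argument handles it cleanly.
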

Let $\Lambda^l(S)$ denote the real $l$-dimensional Hausdorff measure of $S\subset \mathbb C^m.$  We have the result as follows:
\begin{theorem}\label{th15}
Let $\mathcal F$ be a family of meromorphic mappings of a domain $\mathbb D$ in $\mathbb C^m$ into $\mathbb P^n(\mathbb C).$ Let
 $q\ge n n_{\mathcal D}+2n+1$
 be a positive integer. Suppose that for each $f\in \mathcal F,$ there exist $q$ moving hypersurfaces $D_{1,f}, \dots, D_{q,f}$ which are defined by 
homogeneous 
polynomials $Q_{1,f},\dots,Q_{q,f}\in H_{\mathbb D}[x_0, \dots, x_n]$ with degree $d_1,\dots,d_q$ respectively, such that the following conditions are 
satisfied:

\noindent $(i)$ For each $1\le j \le q,$ the coefficients of the homogeneous polynomials $Q_{j,f}$ are bounded above uniformly for all $f$ in $\mathcal F$ 
on compact
 subsets of $\mathbb D,$ and for any $\{f_p\}\subset \mathcal F,$ there exists $z\in \mathbb D$ (which may depend on sequence) such that
 $$ \inf_{p\in \mathbb N}D(Q_{1,f_p}, \dots, Q_{q,f_p})(z) > 0.$$

\noindent $(ii)$ For any fixed compact $K$ of $\mathbb D,$ the $2(m-1)$-dimensional Lebesgue areas of $f^{-1}(D_{k,f})\cap K$ $(1\le k\le n+1)$ counting
 multiplicities are bounded above for all $f\in \mathcal F$ (in particular, $f(\mathbb D)\not\subset D_{k,f} (1\le k\le n+1)).$

\noindent $(iii)$ There exists a closed subset  $S$ in $\mathbb D$ with $\Lambda^{2m-1}(S)=0$ such that for any fixed compact subset $K$ of 
$\mathbb D-S,$ the $2(m-1)$-dimensional Lebesgue areas of
$$  \{z\in \text{supp} \nu_{Q_{k,f}(\tilde f)}: \nu_{Q_{k,f}(\tilde f)}(z)<m_k\}\cap K \; (n+2\le k\le q)$$
ignoring multiplicities for all $f\in \mathcal F$ are bounded above, where $\{m_k\}_{k=n+2}^{q}$ are fixed positive integers or $\infty$ with
$$ \sum_{k=n+2}^{q}\dfrac{1}{m_k}<\dfrac{q-n-(n-1)(n_{\mathcal D}+1)}{n_{\mathcal D}(n_{\mathcal D}+2)}.$$

Then $\mathcal F$ is a meromorphicaly normal family on $\mathbb D.$
\end{theorem}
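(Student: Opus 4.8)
The strategy is the one developed by Tu and refined by Tu--Li and Dethloff--Thai--Trang: use a rescaling (Zalcman--Aladro--Krantz) argument to reduce a hypothetical failure of normality to the existence of a nonconstant entire curve with prescribed intersection behaviour, rule such a curve out by a Picard-type theorem, and then upgrade the resulting quasi-regular convergence to meromorphic convergence using the area hypotheses $(ii)$ and $(iii)$. Since meromorphic normality is a local notion and $\mathbb D$ is connected, I would first fix a sequence $\{f_p\}\subset\mathcal F$, let $z$ be the point supplied by $(i)$, and choose a polydisc $\Delta\Subset\mathbb D$ centred at $z$ on which, by continuity of $D(Q_{1,f_p},\dots,Q_{q,f_p})$ together with $\inf_p D(Q_{1,f_p},\dots,Q_{q,f_p})(z)>0$ and the uniform coefficient bounds in $(i)$, one has $D(Q_{1,f_p},\dots,Q_{q,f_p})\ge c>0$ for all $p$ on $\Delta$. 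After passing to a subsequence, the uniformly bounded holomorphic coefficient functions of the $Q_{j,f_p}$ converge locally uniformly on $\Delta$ to the coefficient functions of moving hypersurfaces $D_j^{\infty}$, still in weakly general position at $z$.

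Next I would show $\{f_p\}$ is quasi-normal on $\Delta$. If not, the several-variable rescaling lemma (restrict to complex lines through a cluster point $z_0$ and renormalise) produces, after a further subsequence, a nonconstant holomorphic map $g\colon\mathbb C\to\mathbb P^n(\mathbb C)$; since renormalisation contracts the domain, the moving hypersurfaces degenerate to the \emph{fixed} hypersurfaces $D_j^{\infty}(z_0)$, which (for $z_0$ chosen outside the proper analytic set where the discriminant of the limiting configuration vanishes) are in general position, and by $(iii)$ the low-multiplicity loci have areas that disappear under renormalisation, so a Hurwitz-type argument forces $g$ to meet each $D_j^{\infty}(z_0)$ with multiplicity $\ge m_j$ for $n+2\le j\le q$. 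Composing $g$ with the degree-$m_{\mathcal D}$ Veronese embedding $\varrho_{m_{\mathcal D}}$ produces a nonconstant holomorphic curve in $\mathbb P^{n_{\mathcal D}}(\mathbb C)$ ramified over the $q-n-1$ hyperplanes associated with the $(Q_j^{\infty}(z_0))^{*}$; general position of the $D_j^{\infty}(z_0)$ bounds by $n$ the number of these hyperplanes through any point of the image, which is the input for the Nochka-weight construction, and the resulting defect relation, together with $\sum_{j=n+2}^{q}1/m_j<\tfrac{q-n-(n-1)(n_{\mathcal D}+1)}{n_{\mathcal D}(n_{\mathcal D}+2)}$, forces $g$ constant --- a contradiction. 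Hence $\{f_p\}$ is quasi-normal on $\Delta$; passing to a subsequence, it converges quasi-regularly on $\Delta$, off a nowhere dense analytic set, to a meromorphic map $f$.

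To upgrade quasi-regular convergence to meromorphic convergence I would argue directly in $\mathbb P^n(\mathbb C)$. Because the $n+1$ moving hypersurfaces $D_{1,f_p},\dots,D_{n+1,f_p}$ are in general position with uniformly bounded coefficients and $D(Q_{1,f_p},\dots,Q_{q,f_p})\ge c>0$, they have, uniformly, no common zero, so control of the reduced representations $\tilde f_p$ (their indeterminacy sets and the components of their zero divisors) is equivalent to control of the intersection divisors $f_p^{*}D_{k,f_p}$ for $1\le k\le n+1$; hypothesis $(ii)$ then bounds the $2(m-1)$-dimensional masses of these divisors locally uniformly. By the compactness theorem for closed positive currents of bidimension $(m-1,m-1)$, the indeterminacy locus of the quasi-regular limit supports a genuine divisor, so $f$ extends meromorphically across it, and one checks, in the sense of Definitions \ref{dn3}--\ref{dn4a}, that $\{f_p\}$ then converges meromorphically to $f$ on $\Delta$; hypothesis $(iii)$ together with $\Lambda^{2m-1}(S)=0$ guarantees the exceptional set $S$ is no obstruction. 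A standard exhaustion argument --- using that the limiting configuration of hypersurfaces stays in general position off a proper analytic subset of $\mathbb D$, so that the quasi-normality step applies near a dense set of points, and patching the local meromorphic limits via the identity theorem for their reduced representations --- then propagates meromorphic convergence of a single subsequence to all of $\mathbb D$, which is the assertion.

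The hard part is the quasi-normality step. It requires a second main theorem with \emph{exactly} the constant $\tfrac{q-n-(n-1)(n_{\mathcal D}+1)}{n_{\mathcal D}(n_{\mathcal D}+2)}$ for entire curves ramified over hyperplanes obtained, via the Veronese embedding, from moving hypersurfaces in weakly general position --- the Nochka-weight construction carried out in the situation where at most $n$ of the hyperplanes pass through any point of the image curve --- and it requires making the rescaling argument legitimate for \emph{meromorphic} rather than holomorphic mappings, so that the bubbles are genuine entire curves and the areas of the relevant bad loci truly vanish in the limit; this is precisely what hypotheses $(ii)$ and $(iii)$ are designed to furnish. By comparison, the reduction to a polydisc, the treatment of the sequence-dependent point $z$, and the globalisation from local to global meromorphic convergence are routine within this circle of ideas.
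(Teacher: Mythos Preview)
Your overall architecture is in the right spirit, but the order of operations is inverted relative to the paper, and this inversion creates a genuine gap. The Aladro--Krantz rescaling lemma (Lemma~\ref{lm5c}) is stated and proved for families of \emph{holomorphic} mappings; there is no off-the-shelf analogue characterising failure of quasi-normality for meromorphic families, and your acknowledgment that ``making the rescaling argument legitimate for meromorphic rather than holomorphic mappings'' is ``precisely what hypotheses $(ii)$ and $(iii)$ are designed to furnish'' does not supply such a lemma. The paper avoids this difficulty entirely by reversing the steps: it \emph{first} invokes Stoll's normality results for divisors (Lemmas~\ref{lmb2} and~\ref{lmb3}) so that, after passing to a subsequence, the preimages $f_p^{-1}(D_{k,f_p})$ for $1\le k\le n+1$ and the low-multiplicity loci for $n+2\le k\le q$ converge as closed subsets to analytic sets $S_k$; adjoining the resultant locus $\widetilde S$ of the limiting hypersurfaces gives a thin analytic set $E$ such that on any relatively compact $U_{z_1}\subset(\mathbb D-S)-E$ the restricted maps $f_p|_{U_{z_1}}$ are eventually \emph{holomorphic}, omit the first $n+1$ hypersurfaces, and meet the remaining ones with multiplicity at least $m_j$. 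Now Corollary~\ref{corth14} (whose proof via Theorem~\ref{th14} uses the rescaling lemma, but on a holomorphic family) applies directly. A diagonal argument yields uniform convergence on compacta of $(\mathbb D-S)-E$, and the upgrade to meromorphic convergence on all of $\mathbb D$ is handled by Lemma~\ref{lm5b}, not by a currents argument.

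Two further points. For the Picard step, the paper does not run a Veronese + Nochka-weight argument; it quotes Lemma~\ref{lm9} (the Hu--Thin second-main-theorem consequence for hypersurfaces in weakly general position) as a black box, after noting that at most $n$ of the limiting hypersurfaces can \emph{contain} the image of $g$ (so $|\mathcal I|\le n$ and $q-|\mathcal I|\ge nn_{\mathcal D}+n+1$). Your sketch that ``general position \dots\ bounds by $n$ the number of these hyperplanes through any point of the image curve'' conflates this containment bound with a subgeneral-position statement for the associated hyperplanes in $\mathbb P^{n_{\mathcal D}}(\mathbb C)$; extracting the exact constant $\tfrac{q-n-(n-1)(n_{\mathcal D}+1)}{n_{\mathcal D}(n_{\mathcal D}+2)}$ from Veronese + Nochka is precisely the nontrivial content of Lemma~\ref{lm9}. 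Finally, note that the paper works globally on $\mathbb D$ rather than on a single polydisc $\Delta$ around the special point $z$: the exceptional set $E$ is analytic in $\mathbb D-S$, so the diagonal-argument-plus-Lemma~\ref{lm5b} step yields meromorphic convergence on all of $\mathbb D$ in one stroke, with no separate patching or exhaustion needed.
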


\begin{theorem}\label{th16}
Let $S$ be an analytic subset of a domain $\mathbb D$ in $\mathbb C^m$ with $\dim_{\mathbb C} S\le m-2.$ Let $f$ be a holomorphic mapping from
 $\mathbb D-S$ into $\mathbb P^n(\mathbb C).$ Let $D_1, \dots, D_q$ be $q$ $(\ge  nn_{\mathcal D}+2n+1)$ moving hypersurfaces in 
$\mathbb P^n(\mathbb C)$ which are defined by homogeneous polynomials $Q_1,\dots, Q_q\in H_{\mathbb D}[x_0, \dots, x_n]$ with degree 
$d_1,\dots,d_q$ respectively such that such that for any $z\in \mathbb D:$
 $$D(Q_1, \dots, Q_q)(z) > 0.$$ Let $\tilde f=(f_0,\dots, f_n)$ be a reduced representation of $f$  on $\mathbb D-S$ 
with $ f_i(z)\ne 0$ for some $i$ and $z,$ and when $m_j\geq 2$
        $$\sup_{1\leq |\alpha|\leq m_j-1, z \in f^{-1}(D_{j})}\Big|\dfrac{\partial^{\alpha}(Q_{j}(\tilde f_{i}))}{\partial z^{\alpha}}(z)\Big| < \infty$$
        hold for all $j\in\{1,\dots,q\},$  where $m_1, \dots, m_q$ are positive integers and may be $\infty,$  with 
$$\sum_{j=1}^{q}\dfrac{1}{m_j}<\dfrac{q-n-(n-1)(n_{\mathcal D}+1)}{n_{\mathcal D}(n_{\mathcal D}+2)}.$$ Then the holomorphic
 mapping $f$  from $\mathbb D-S$ into $\mathbb P^n(\mathbb C)$ extends to a holomorphic mapping from $\mathbb D$ into $\mathbb P^n(\mathbb C).$
\end{theorem}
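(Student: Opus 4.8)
The plan is to derive the extension from the normal-family statement of Theorem \ref{th14} by the dilation-and-rescaling scheme that turns a Montel-type theorem into a Big Picard-type theorem; this is the several-variable analogue of the argument used by Tu--Li \cite{TL1} in the linear case. Since the conclusion is local and $\dim_{\mathbb C}S\le m-2$, I would first fix $a\in S$, restrict $f$ to a generic complex $2$-plane through $a$ (which meets $S$ only at $a$, and along which all the hypotheses are inherited, the degrees $d_j$ and hence $n_{\mathcal D}$ being unchanged), and thereby reduce to the model situation $m=2$, $a=0$, $S=\{0\}$; it then suffices to prove that $\lim_{z\to 0}f(z)$ exists in $\mathbb P^n(\mathbb C)$, for once this holds at every point of $S$ the map $f$ is continuous on $\mathbb D$ and holomorphic off a thin analytic set, hence extends holomorphically by the Riemann removable-singularity theorem.

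For $0<t<1$ put $g_t(z):=f(\tfrac t3 z)$, a holomorphic map on $\{0<\|z\|<3/t\}$, so that these domains exhaust $\mathbb C^2\setminus\{0\}$; let $D_{j,g_t}$ be the moving hypersurface obtained from $D_j$ by replacing the coefficient functions $a_{jI}$ of $Q_j$ with $a_{jI}(\tfrac t3\,\cdot)$. I claim the family $\mathcal F:=\{g_t:0<t<1\}$ satisfies the hypotheses of Theorem \ref{th14} on the domain $\{0<\|z\|<3\}$ (and, enlarging, on any shell): the coefficients of $Q_{j,g_t}$ are locally bounded uniformly in $t$ because the $a_{jI}$ are holomorphic on $\mathbb D$; the general-position condition holds because $D(Q_{1,g_t},\dots,Q_{q,g_t})(z_0)=D(Q_1,\dots,Q_q)(\tfrac t3 z_0)$ is positive for every $t$ by hypothesis and tends to $D(Q_1,\dots,Q_q)(0)>0$ as $t\to0$, so it has positive infimum along any sequence of parameters; and the derivative condition in Theorem \ref{th14}$(ii)$ follows from the observations that $\tilde g_t:=(f_0(\tfrac t3\cdot),\dots,f_n(\tfrac t3\cdot))$ is a reduced representation of $g_t$, that $Q_{j,g_t}$ evaluated along its associated tuple is the function $z\mapsto\bigl(Q_j(\tilde f_i)\bigr)(\tfrac t3 z)$, and that $g_t^{-1}(D_{j,g_t})=\tfrac3t\,f^{-1}(D_j)$, so that by the chain rule
\[
\bigl|\partial^{\alpha}\bigl(Q_{j,g_t}(\tilde g_{t,i})\bigr)(z)\bigr|
=\Bigl(\tfrac t3\Bigr)^{|\alpha|}\,\bigl|\bigl(\partial^{\alpha}(Q_j(\tilde f_i))\bigr)(\tfrac t3 z)\bigr|
\qquad\bigl(z\in g_t^{-1}(D_{j,g_t})\bigr),
\]
which for $1\le|\alpha|\le m_j-1$ and $t<3$ is bounded above by $\sup_{w\in f^{-1}(D_j)}\bigl|\partial^{\alpha}(Q_j(\tilde f_i))(w)\bigr|<\infty$, uniformly in $t$. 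Hence $\mathcal F$ is a holomorphically normal family, and a diagonal argument over growing shells produces limit maps defined on all of $\mathbb C^2\setminus\{0\}$.

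Now the Kwack-type conclusion. For any $t_k\to0$ a subsequence of $\{g_{t_k}\}$ converges, uniformly on compacta of $\mathbb C^2\setminus\{0\}$, to a holomorphic $g\colon\mathbb C^2\setminus\{0\}\to\mathbb P^n(\mathbb C)$; as the $D_{j,g_{t_k}}$ converge to the fixed hypersurfaces $D_j^0$ defined by $\sum_I a_{jI}(0){\bf x}^I$, which are in weakly general position ($D(Q_1,\dots,Q_q)(0)>0$) with $q\ge nn_{\mathcal D}+2n+1$, and as the bounds above carry the factor $(\tfrac{t_k}{3})^{|\alpha|}\to0$, the limit $g$ intersects each $D_j^0$ with multiplicity $\ge m_j$ while $\sum_{j=1}^q\frac1{m_j}<\frac{q-n-(n-1)(n_{\mathcal D}+1)}{n_{\mathcal D}(n_{\mathcal D}+2)}$. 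Restricting $g$ to a generic complex line $L\subset\mathbb C^2$ not through $0$ yields an entire curve $g|_L\colon\mathbb C\to\mathbb P^n(\mathbb C)$ carrying the same data, which must be constant by the Green--Nochka-type degeneracy theorem for holomorphic curves intersecting hypersurfaces in weakly general position that underlies Theorem \ref{th14} (the degenerate cases where $g$ lies in some $D_j^0$ being handled similarly); since this holds for a dense family of lines, $g$ itself is constant. Thus every rescaled sublimit is a constant map, and the standard argument (cf. Tu--Li \cite{TL1}: propagate a constant sublimit to all comparable dilation scales via the cocycle relation $g_s(z)=g_t(\tfrac st z)$, and use the connectedness of the cluster set of $f$ at $0$) yields the existence of $\lim_{z\to0}f(z)$. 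This proves the theorem.

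I expect the main obstacle to be twofold. The genuinely new point is the uniform control in the second step: the hypothesis of the theorem on the $\alpha$-derivatives of $Q_j(\tilde f_i)$ along $f^{-1}(D_j)$ is precisely what is needed to make the scaling estimate for $\mathcal F$ uniform in $t$, and one must check carefully that the reduced representations $\tilde g_t$ behave as claimed and that every relevant bound is genuinely $t$-uniform on each compactum. The second, more classical, difficulty is the Kwack-type passage from ``every rescaled sublimit is constant'' to the actual existence of $\lim_{z\to0}f(z)$, i.e. ruling out that the cluster set of $f$ at $0$ is a nontrivial continuum; I would handle this exactly as in the linear case of Tu--Li \cite{TL1}, exploiting the dilation/cocycle structure of $\mathcal F$ together with the connectedness of cluster sets.
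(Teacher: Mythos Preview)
Your route is genuinely different from the paper's, and it also contains a gap.

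\textbf{What the paper does.} The paper does not slice to a $2$-plane, does not build a dilation family, and does not invoke a Kwack-type cluster-set argument. Instead it first proves an auxiliary lemma (Lemma~\ref{lm18}) to the effect that, on any bounded $\mathbb U$ with $\overline{\mathbb U}\subset\mathbb D$, the map $f$ is a \emph{normal holomorphic mapping} in the sense of Definition~\ref{normal}; the proof of this lemma is the Zalcman-type argument of Lemma~\ref{nf} combined with the constancy step you describe (exactly as in Theorem~\ref{th14} and Lemma~\ref{lmth12}). Once $f$ is normal on $\mathbb U-S$, the definition gives a Lipschitz bound $d_{\mathbb P^n(\mathbb C)}(f(z),f(w))\le c\,d_{\mathbb U-S}^{K}(z,w)$, and since $\operatorname{codim}S\ge 2$ one has $K_{\mathbb U-S}=K_{\mathbb U}$ on $\mathbb U-S$ by Noguchi--Ochiai (Lemma~\ref{lmth12a}); thus $f$ extends continuously, hence holomorphically by Riemann's theorem. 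This is, in fact, precisely the Tu--Li scheme of \cite{TL1} (normal mapping $+$ Kobayashi invariance), so your attribution is inverted: what you call ``the several-variable analogue of Tu--Li'' is the paper's argument, not the dilation-family approach you propose.

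\textbf{The gap in your reduction.} Your argument needs $S$ to be scale-invariant near $a$ to run the dilation family, which forces the restriction to a generic $2$-plane $P$ through $a$ so that $S\cap P=\{a\}$. But proving, for each such $P$, that $\lim_{z\to a,\ z\in P}f(z)$ exists does \emph{not} by itself yield $\lim_{z\to a}f(z)$ in $\mathbb C^m$: you must still show that these planar limits coincide for different $P$ and that convergence is uniform over directions of approach. The first point can be handled by intersecting two generic $2$-planes along a line, but the second (uniformity) is exactly the content of a distance-decreasing estimate, and establishing it essentially reproduces the Kobayashi-Lipschitz bound that the paper uses directly. In short, the slicing step trades the clean metric argument for an equivalent but more laborious one, and as written it is incomplete. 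Your intermediate computations (uniform bounds for $\mathcal F$ under rescaling, constancy of sublimits via Lemma~\ref{lm9}) are correct in spirit, but they are more naturally packaged as a proof that $f$ is a normal mapping, after which the paper's two-line Kobayashi argument finishes the job without any Kwack-type analysis or dimensional reduction.
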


\begin{theorem}\label{th17}
Let $S$ be an analytic subset of a domain $\mathbb D$ in $\mathbb C^m$ with codimension one, whose singularities are normal crossings. Let $f$ be a
 holomorphic mapping from $\mathbb D-S$ into $\mathbb P^n(\mathbb C).$  Let $D_1, \dots, D_q$ be $q$ $(\ge  n n_{\mathcal D}+2n+1)$ moving 
hypersurfaces in $\mathbb P^n(\mathbb C)$ which are defined by homogeneous polynomials $Q_1,\dots, Q_q\in H_{\mathbb D}[x_0, \dots, x_n]$ with 
degree $d_1,\dots,d_q$ respectively such that for any $z\in \mathbb D:$
 $$D(Q_1, \dots, Q_q)(z) > 0.$$ 
Let $\tilde f=(f_0,\dots, f_n)$ be a reduced representation of $f$  on $\mathbb D-S$ 
with $ f_i(z)\ne 0$ for some $i$ and $z,$ and when $m_j\geq 2$
        $$\sup_{1\leq |\alpha|\leq m_j-1, z \in f^{-1}(D_{j})}\Big|\dfrac{\partial^{\alpha}(Q_{j}(\tilde f_{i}))}{\partial z^{\alpha}}(z)\Big| < \infty$$
        hold for all $j\in\{1,\dots,q\},$  where $m_1, \dots, m_q$ are positive integers and may be $\infty,$ with 
$$\sum_{j=1}^{q}\dfrac{1}{m_j}<\dfrac{q-n-(n-1)(n_{\mathcal D}+1)}{n_{\mathcal D}(n_{\mathcal D}+2)}.$$ Then the holomorphic 
mapping $f$  from $\mathbb D-S$ into $\mathbb P^n(\mathbb C)$ extends to a holomorphic mapping from $\mathbb D$ into $\mathbb P^n(\mathbb C).$
\end{theorem}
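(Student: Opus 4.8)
The plan is to localise, reduce to the normality statement of Theorem \ref{th14}, and then run the classical big Picard argument, following the scheme of the proof of Theorem \ref{th6} in \cite{TL1}.

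\textbf{Reduction.} Since the conclusion is local, it suffices to extend $f$ across a neighbourhood of each point of $S$. Near a point of the smooth locus of $S$ the normal crossing hypothesis lets us choose local coordinates with $S=\{z_1=0\}$; near a singular point we may take $S=\{z_1\cdots z_k=0\}$ with $k\ge 2$, and then, once $f$ is extended across every branch away from their mutual intersections, it becomes holomorphic on a domain minus an analytic set of dimension $\le m-2$, whence Theorem \ref{th16} (its hypotheses being inherited) furnishes the extension across the rest. So it is enough to treat $S=\{z_1=0\}$: writing $z=(z_1,w)$ with $w\in\Delta^{m-1}$ and, after a translation, working near $(0,0)$, we must show that $f$ extends holomorphically from $\Delta^*_r\times\Delta^{m-1}_r$ across $(0,0)$.

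\textbf{Auxiliary family.} Let $\{z_1^{(\nu)}\}\subset\Delta^*$ be any sequence with $z_1^{(\nu)}\to0$, put $\psi_\nu(\zeta,w)=(z_1^{(\nu)}e^{\zeta},w)$, and on the fixed domain $\widetilde{\mathbb D}:=\{\zeta\in\C:|\mathrm{Re}\,\zeta|<1\}\times\Delta^{m-1}_{r/2}$ define the holomorphic mappings
\[
g_\nu:=f\circ\psi_\nu:\ \widetilde{\mathbb D}\ \longrightarrow\ \PP^n(\C),
\]
which are well defined for all large $\nu$ because then $\psi_\nu(\widetilde{\mathbb D})\subset\Delta^*_r\times\Delta^{m-1}_r\subset\mathbb D\setminus S$. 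For $1\le j\le q$ let $D_{j,g_\nu}$ be the moving hypersurface on $\widetilde{\mathbb D}$ defined by the homogeneous polynomial obtained from $Q_j$ by pulling back its coefficients along $\psi_\nu$. I claim $\{g_\nu\}$ satisfies the hypotheses of Theorem \ref{th14}. First, on any compact $K\subset\widetilde{\mathbb D}$ the sets $\psi_\nu(K)$ ($\nu$ large) lie in a fixed compact subset of $\mathbb D$, so the coefficients of the pulled-back $Q_j$'s are bounded uniformly in $\nu$; moreover $D(Q_{1,g_\nu},\dots,Q_{q,g_\nu})(0,0)=D(Q_1,\dots,Q_q)(0,0)>0$ for every $\nu$, so the infimum over $\nu$ is positive — this is exactly where one uses the hypothesis that $D(Q_1,\dots,Q_q)(z)>0$ for \emph{every} $z\in\mathbb D$. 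Second, the quantity appearing in hypothesis (ii) of Theorem \ref{th17} is built from the representation-independent ratios $(f_0/f_i,\dots,f_n/f_i)$, which compose with $\psi_\nu$ to give the corresponding ratios for $g_\nu$; hence by the chain rule each derivative $\partial^\alpha$ of $Q_{j,g_\nu}$ evaluated on the $i$-th normalisation of $g_\nu$ is a finite combination of the pulled-back derivatives $(\partial^\beta(Q_j(\tilde f_i)))\circ\psi_\nu$ ($|\beta|\le|\alpha|$) with coefficients equal to derivatives of the components of $\psi_\nu$, which are bounded by $e$ since $|z_1^{(\nu)}|<1$; this transports the (global) bound of Theorem \ref{th17}(ii) to a bound for $\{g_\nu\}$ uniform in $\nu$. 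Finally the numerical conditions $q\ge nn_{\mathcal D}+2n+1$ and $\sum_{j}1/m_j<\frac{q-n-(n-1)(n_{\mathcal D}+1)}{n_{\mathcal D}(n_{\mathcal D}+2)}$ are precisely those of Theorem \ref{th14}. Therefore $\{g_\nu\}$ is a holomorphically normal family on $\widetilde{\mathbb D}$.

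\textbf{Conclusion and main obstacle.} From here I would conclude as in \cite{TL1}: passing to a subsequence, $g_\nu\to g$ locally uniformly on $\widetilde{\mathbb D}$ with $g$ holomorphic; choosing $z_1^{(\nu)}$ successively along a ray and along circles $\{|z_1|=\text{const}\}$, and using that $\widetilde{\mathbb D}$ is a product with $\Delta^{m-1}_{r/2}$ so that the convergence is uniform in $w$, one shows in the usual way that $\lim_{z_1\to0}f(z_1,w)$ exists, locally uniformly in $w$; thus $f$ extends continuously across $\{z_1=0\}$, and Riemann's extension theorem applied to the coordinate functions of $f$ in a local chart of $\PP^n(\C)$ around $f(0,0)$ shows the extension is holomorphic. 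Together with the reduction step and Theorem \ref{th16}, this proves the theorem. I expect the last step to be the only real difficulty: the verification that each approach family $\{g_\nu\}$ is normal is routine once Theorem \ref{th14} is in hand, but the passage from ``all approach families are normal'' to the existence of a single-valued radial limit of $f$ requires the delicate ray/circle comparison that is the core of the big Picard theorem, and one must stay careful that the \emph{moving} divisors $D_j$ do not destroy the strict positivity of $D(Q_1,\dots,Q_q)$ at the limit point $(0,0)$.
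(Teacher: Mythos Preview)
Your argument is essentially correct, but it follows a different route from the paper. The paper does not build approach families and invoke Theorem~\ref{th14}; instead it first proves an auxiliary lemma (Lemma~\ref{lm18}) saying that, on any bounded $U$ with $\overline U\subset\mathbb D$, the single map $f|_{U\setminus S}$ is a \emph{normal holomorphic mapping} in the sense of Definition~\ref{normal} (the Kobayashi-metric definition), using the Zalcman-type rescaling Lemma~\ref{nf} together with Lemma~\ref{lm9} exactly as in the proofs of Lemma~\ref{lmth12}/Theorem~\ref{th14}. Once $f$ is known to be normal on $U\setminus S$, the extension across the normal-crossing divisor $S$ is obtained in one stroke by quoting Theorem~2.3 of Joseph--Kwack \cite{JK}, with no reduction to $S=\{z_1=0\}$, no appeal to Theorem~\ref{th16}, and no explicit ray/circle comparison. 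Your approach trades this black box for a hands-on big-Picard argument via Theorem~\ref{th14}; it is longer but more self-contained, and the verification that the pulled-back families satisfy the hypotheses of Theorem~\ref{th14} is the honest content that the paper hides inside Lemma~\ref{lm18} and the Joseph--Kwack citation.

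One small slip to fix: $D(Q_{1,g_\nu},\dots,Q_{q,g_\nu})(0,0)=D(Q_1,\dots,Q_q)\bigl(\psi_\nu(0,0)\bigr)=D(Q_1,\dots,Q_q)(z_1^{(\nu)},0)$, not $D(Q_1,\dots,Q_q)(0,0)$; and Theorem~\ref{th14}(i) asks for the positive infimum at \emph{every} fixed point of $\widetilde{\mathbb D}$, not just at $(0,0)$. Both are easily handled: for any fixed $(\zeta_0,w_0)\in\widetilde{\mathbb D}$ one has $\psi_\nu(\zeta_0,w_0)\to(0,w_0)\in\mathbb D$, so by continuity of $D(Q_1,\dots,Q_q)$ and the pointwise hypothesis $D(Q_1,\dots,Q_q)(z)>0$ on $\mathbb D$ the values $D(Q_1,\dots,Q_q)(\psi_\nu(\zeta_0,w_0))$ stay bounded below by a positive constant for all large $\nu$.
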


\section{Some lemmas}
In order to prove our results, we need some lemmas as follows:
\begin{defi}\label{dn6}\cite{ST}
Let $\{A_i\}$ be a sequence of closed subsets of $\mathbb D.$ It is said to converge to a closed subset $A$ of $\mathbb D$ if and only if $A$ coincides 
with the set of all $z$ such that every neighborhood $U$ of $z$ intersects $A_i$ for all but finitely many $i$ and, simultaneously, with the set of all $z$ such
 that every $U$ intersects $A_i$ for infinitely many $i.$
\end{defi}

\begin{lemma}\label{lmb2}\cite{ST}
Let $\{N_i\}$ be a sequence of pure $(m - 1)$-dimensional analytic subsets of a domain $\mathbb D$ in $\mathbb C^m.$ If the $2(m-1)$-
 dimensional Lebesgue areas of $N_i\cap K$ ignoring multiplicities $(i = 1, 2, \dots)$ are bounded above for any fixed compact subset $K$ of $\mathbb D,$ 
then $\{N_i\}$ is normal in the sense of the convergence of closed subsets in $\mathbb D.$
\end{lemma}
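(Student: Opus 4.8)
The plan is to prove Lemma~\ref{lmb2} in two stages: a soft compactness argument that produces a subsequence converging in the sense of Definition~\ref{dn6}, followed by the substantive point that the limit is again a pure $(m-1)$-dimensional analytic subset of $\mathbb D$ (or empty), which is the only place the area bound is used. Thus ``normality'' here means: every sequence has a subsequence converging, in the sense of Definition~\ref{dn6}, to a pure $(m-1)$-dimensional analytic set.

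For the first stage I would observe that the collection of closed subsets of the locally compact, second countable space $\mathbb D$, equipped with the topology underlying Definition~\ref{dn6} (equivalently, the Fell topology, along which convergence coincides with Kuratowski convergence on such a space), is itself compact and metrizable. Hence any sequence $\{N_i\}$ admits a subsequence $\{N_{i_k}\}$ converging, in the sense of Definition~\ref{dn6}, to some closed set $N\subseteq\mathbb D$; no hypothesis is needed for this. The whole force of the area bound will be to show that this set-theoretic limit $N$ is analytic of the correct pure dimension.

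For the second stage I would pass from sets to currents. To each $N_i$ associate its reduced integration current $[N_i]$, a positive closed current of bidegree $(1,1)$; by Wirtinger's inequality the mass of $[N_i]$ over a compact $K$ equals, up to a dimensional constant, the $2(m-1)$-dimensional Lebesgue area of $N_i\cap K$ ignoring multiplicities, hence is bounded above uniformly in $i$ for each fixed $K$. By weak-$*$ compactness of positive currents of locally bounded mass, passing to a further subsequence we may assume $[N_{i_k}]\to T$ weakly, with $T$ a positive closed $(1,1)$-current on $\mathbb D$. By Siu's structure theorem, $T=\sum_j\lambda_j[Z_j]+R$, where the $Z_j$ are irreducible $(m-1)$-dimensional analytic subsets, $\lambda_j>0$, and $R$ is positive closed with all level sets of positive Lelong number of dimension at most $m-2$; lower semicontinuity of Lelong numbers under weak limits forces $\lambda_j\ge 1$. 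Next one checks $N=\operatorname{supp}T$: the inclusion $\operatorname{supp}T\subseteq N$ is immediate from weak convergence, and the reverse, together with local finiteness of the $2(m-1)$-Hausdorff measure of $N$, follows as in Bishop's theorem (semicontinuity of Hausdorff measure along Kuratowski limits of analytic sets with locally bounded volume). Finally, since $R$ is positive closed of locally bounded mass and supported in the set $N$ of locally finite $2(m-1)$-Hausdorff measure, the support/structure theorems for such currents force $R$ to be itself a sum of integration currents over analytic sets, i.e.\ divisorial, contradicting the Siu decomposition unless $R=0$. Hence $T=\sum_j\lambda_j[Z_j]$ and $N=\operatorname{supp}T=\bigcup_j Z_j$ is a pure $(m-1)$-dimensional analytic subset of $\mathbb D$ (possibly empty), as required.

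The main obstacle is precisely the identification of the set-theoretic Kuratowski limit with an honest analytic set; this is the content of Bishop's theorem on convergence of analytic sets, and without the area hypothesis it genuinely fails, since thin analytic sets can accumulate onto a region of full dimension (for instance an unboundedly growing union of parallel hyperplanes). The technical heart is the lower semicontinuity of Lelong numbers under weak limits together with the structure theorem for positive closed $(1,1)$-currents and the support theorem for currents on sets of locally finite Hausdorff measure; rather than reproving these I would invoke them, citing \cite{ST} for the precise statement of the lemma and the classical argument of Bishop (which may alternatively be organized through normal families of the local Weierstrass data defining the $N_i$).
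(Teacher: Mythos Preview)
The paper does not prove Lemma~\ref{lmb2} at all; it is quoted verbatim from Stoll~\cite{ST}, so there is no in-paper argument to compare against.

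On your proposal itself: you have over-read the word ``normal''. In this paper the two conclusions are deliberately split: Lemma~\ref{lmb2} asserts only sequential compactness with respect to the closed-subset convergence of Definition~\ref{dn6}, while the statement that the limit is again empty or a pure $(m-1)$-dimensional analytic set is precisely the separate Lemma~\ref{lmb3}. So your ``first stage'' (compactness of the Fell topology on closed subsets of a locally compact second countable space) already dispatches Lemma~\ref{lmb2} as literally stated, and as you yourself observe, it needs no hypothesis whatsoever; the area bound is carried along only because it is the standing hypothesis under which both lemmas are applied jointly in Stoll's framework and later in the proofs of Theorems~\ref{th11} and~\ref{th15}. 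Your ``second stage'' is in fact a proof of Lemma~\ref{lmb3}, not of Lemma~\ref{lmb2}.

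That second stage is essentially the Bishop argument and is sound in outline, but two small points deserve care. First, the semicontinuity you need for $\lambda_j\ge 1$ is the \emph{upper} semicontinuity of Lelong numbers along weak limits with moving base points, namely $\nu(T,z)\ge \limsup_k\nu([N_{i_k}],z_k)$ when $z_k\to z$, not lower semicontinuity; combined with Lelong's lower volume bound this also gives $N\subseteq\operatorname{supp}T$. Second, once you know $\operatorname{supp}T=N$ has locally finite $\Lambda^{2m-2}$-measure, it is cleaner to invoke the support theorem for closed positive $(1,1)$-currents directly to conclude $T$ is a sum of integration currents over hypersurfaces, rather than going through Siu's decomposition and then arguing $R=0$; Siu is not needed in codimension one.
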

\begin{lemma}\label{lmb3}\cite{ST}
Let $\{N_i\}$ be a sequence of pure $(m-1)$-dimensional analytic subsets of a domain $\mathbb D$ in $\mathbb C^m.$ Assume that
the $2(m - 1)$-dimensional Lebesgue areas of $N_i\cap K$ ignoring multiplicities $(i = 1, 2, \dots)$ are bounded above for any fixed compact subset 
$K$ of $\mathbb D$ and that $\{N_i\}$ converges to $N$ as a sequence of closed subsets of $\mathbb D.$ Then $N$ is
either empty or a pure $(m-1)$-dimensional analytic subset of $\mathbb D$.
\end{lemma}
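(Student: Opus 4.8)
Since the assertion is local and $N$ is closed, it suffices to fix a point $a\in N$ (if $N=\emptyset$ there is nothing to prove) and to produce a neighbourhood of $a$ in which $N$ is a pure $(m-1)$-dimensional analytic set. The plan is to reprove Bishop's theorem on the convergence of analytic sets by the branched-covering construction, the set-convergence hypothesis entering only through Definition \ref{dn6} and the area bound through the control of the number of sheets.

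First I would pass to currents in order to control the size of $N$. Each $N_i$ carries the positive closed current of integration $[N_i]$ of bidegree $(1,1)$, whose mass on a compact $K$ equals, up to a universal constant, the $2(m-1)$-dimensional Lebesgue area of $N_i\cap K$ ignoring multiplicities; by hypothesis these masses are locally uniformly bounded, so after extracting a subsequence (which, being a subsequence of a Kuratowski-convergent sequence, again converges to $N$) we may assume $[N_i]\rightharpoonup T$ for a positive closed $(1,1)$-current $T$ on $\mathbb D$. If $z_0\notin N$, some neighbourhood meets only finitely many $N_i$, so $T$ vanishes there; conversely, if $z_0\in N$ then for every small $r>0$ Definition \ref{dn6} furnishes a point $w_i\in N_i\cap B(z_0,r/2)$ for all large $i$, and the Lelong monotonicity lower bound for areas of analytic sets gives $\mathrm{area}_{2(m-1)}(N_i\cap B(z_0,r))\ge c_m (r/2)^{2(m-1)}$, whence $\|T\|\big(\overline{B(z_0,r)}\big)\ge c_m(r/2)^{2(m-1)}>0$. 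Thus $\mathrm{supp}\,T=N$ and $\|T\|$ has lower $(2m-2)$-density bounded below uniformly on $N$; by the elementary $5r$-covering comparison between such a measure and Hausdorff measure, $N$ has locally finite $\mathcal H^{2(m-1)}$-measure, hence empty interior and $\mathcal H^{2m-1}(N)=0$ locally.

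Next comes the branched-cover construction. Because $N$ has locally finite $\mathcal H^{2(m-1)}$-measure, a generic complex line through $a$ meets $N$ in a discrete set, so after an affine change of coordinates $(z',z_m)\in\mathbb C^{m-1}\times\mathbb C$ placing $a$ at the origin there is a polydisc $\Delta=\Delta'\times\Delta_m$ about $a$ with $\overline{\Delta'}\times\partial\Delta_m$ disjoint from the closed set $N$. Since $N_i\cap\big(\overline{\Delta'}\times\overline{\Delta_m}\big)$ is eventually contained in any neighbourhood of $N$ (a consequence of Definition \ref{dn6} by a compactness argument), for all large $i$ the projection $\pi\colon N_i\cap\Delta\to\Delta'$ is proper, hence a branched covering of some degree $d_i$, and the area formula for the $1$-Lipschitz map $\pi$ gives $d_i\,\mathrm{vol}(\Delta')\le \mathrm{area}_{2(m-1)}(N_i\cap\Delta)$, so $d_i$ is bounded and, after a further subsequence, $d_i\equiv d$ with $d\ge 1$ since $a\in N$. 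The elementary symmetric functions of the fibres extend to holomorphic functions $\sigma_{1,i},\dots,\sigma_{d,i}$ on $\Delta'$ with $|\sigma_{j,i}|\le\binom{d}{j}(\mathrm{rad}\,\Delta_m)^j$, so by Montel a subsequence converges locally uniformly to holomorphic $\sigma_1,\dots,\sigma_d$; the monic pseudopolynomials $W_i(z',z_m)=z_m^{d}+\sum_{j=1}^{d}(-1)^j\sigma_{j,i}(z')z_m^{d-j}$ satisfy $N_i\cap\Delta=\{W_i=0\}$ and converge uniformly on $\Delta'\times\overline{\Delta_m}$ to $W(z',z_m)=z_m^{d}+\sum_{j=1}^{d}(-1)^j\sigma_j(z')z_m^{d-j}$.

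Finally I would identify the pure $(m-1)$-dimensional analytic set $A:=\{W=0\}\cap\Delta$ with $N\cap\Delta$ by a Hurwitz argument: if $p\notin A$ then $W_i\neq 0$ near $p$ for large $i$, so $p\notin N_i$ in a fixed neighbourhood, so $p\notin\limsup_i N_i=N$; and if $p=(z_0',z_m^0)\in A$ then $W(z_0',\cdot)$ is monic of degree $d$, hence not identically zero, so by Rouché the zeros of $W_i(z_0',\cdot)$ cluster to $z_m^0$, producing points of $N_i$ converging to $p$ and hence $p\in\liminf_i N_i=N$. Therefore $N\cap\Delta=\{W=0\}\cap\Delta$ is analytic and pure $(m-1)$-dimensional near $a$, and covering $N$ by such neighbourhoods shows $N$ is a pure $(m-1)$-dimensional analytic subset of $\mathbb D$. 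The one genuinely delicate point is the middle step: promoting the area bound and the set convergence to a uniform local branched-cover description with a bounded number of sheets, which hinges on the current estimate forcing $N$ to be thin enough to admit good coordinates and polydiscs simultaneously for all large $N_i$; once that is in place, Montel and Rouché finish the argument routinely.
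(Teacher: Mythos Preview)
The paper does not prove this lemma at all: it is quoted verbatim from Stoll \cite{ST} and used as a black box, so there is no in-paper argument to compare against. Your proposal is essentially a self-contained proof of Bishop's sequence theorem for pure $(m-1)$-dimensional analytic sets, and the outline is sound: the Lelong lower bound gives $\mathrm{supp}\,T=N$ with locally finite $\mathcal H^{2(m-1)}$-measure, which lets you choose good coordinates for a proper projection, and then Montel on the elementary symmetric functions plus Rouch\'e identifies the limit with a Weierstrass set. This is more than the paper asks of you, but it is a correct route to the statement; the only point I would flag is that in your second paragraph you should make explicit that the area bound on $N_i\cap K$ also bounds the mass of $T$ on $K$ (weak limits can only lose mass), since you later need this when you invoke the $5r$-covering argument to get $\mathcal H^{2(m-1)}(N\cap K)<\infty$.
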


\begin{lemma}\label{lm5c}\cite{GK}
Let $F$ be a family of holomorphic mappings of a domain $\mathbb D$ in $\mathbb C^m$ into $\mathbb P^{n}(\mathbb C).$ Then the family $F$ is 
not normal on $\mathbb D$ if and only if there exist a compact subset $K_0\subset \mathbb D$ and sequences $\{f_p\}\subset F,$ $\{y_p\}\subset K_0,$ 
$ \{r_p\} \subset \mathbb R$ with $r_p > 0$ and $r_p\to 0^{+},$ and $\{u_p\}\subset \mathbb C^m$ which are unit vectors such that
$$g_p(\xi) := f_p(y_p + r_pu_p\xi),$$
where $\xi \in \mathbb C$ such that $y_p +r_pu_p\xi \in \mathbb D,$ converges uniformly on compact subsets of $\mathbb C$ to a
 nonconstant holomorphic map $g$ of $\mathbb C$ to $\mathbb P^{n}(\mathbb C).$
\end{lemma}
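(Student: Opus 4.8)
The plan is to prove the two implications separately; the easy direction is a contradiction argument, while the substantive direction is the Zalcman--Brody rescaling construction carried out for holomorphic maps into the compact manifold $\mathbb P^n(\mathbb C)$. The first thing I would do is fix a convenient \emph{spherical derivative}. For a holomorphic map $h$ of a domain in $\mathbb C^m$ into $\mathbb P^n(\mathbb C)$ with a local reduced representation $\tilde h=(h_0,\dots,h_n)$, set
$$
h^{\#}(z):=\sup_{u\in\mathbb C^m,\ \|u\|=1}\frac{\bigl|\,\tilde h(z)\wedge D_u\tilde h(z)\,\bigr|}{|\tilde h(z)|^{2}},
$$
where $D_u$ is the directional derivative and $|\cdot|$ on $\bigwedge^{2}\mathbb C^{n+1}$ is the induced Hermitian norm; this is the operator norm of $dh_z$ from the Euclidean metric to the Fubini--Study metric, and it is independent of the chosen representation. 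By the Arzel\`a--Ascoli theorem together with the compactness of $\mathbb P^n(\mathbb C)$, a local bound on $h^{\#}$ over a family forces equicontinuity of that family (into the Fubini--Study metric) and hence normality; conversely a normal family has locally bounded spherical derivatives. This is the Marty-type criterion I will use throughout: $\mathcal F$ is normal on $\mathbb D$ if and only if $\sup_{f\in\mathcal F}\sup_{z\in K}f^{\#}(z)<\infty$ for every compact $K\subset\mathbb D$.

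For the ``if'' direction, suppose such $K_{0},\{f_{p}\},\{y_{p}\},\{r_{p}\},\{u_{p}\}$ and $g$ exist and, for contradiction, that $\mathcal F$ is normal. Passing to a subsequence, $y_{p}\to y_{0}\in K_{0}\subset\mathbb D$ and $f_{p}\to f$ locally uniformly on $\mathbb D$ for some holomorphic $f:\mathbb D\to\mathbb P^n(\mathbb C)$. For each fixed $\xi\in\mathbb C$ we have $y_{p}+r_{p}u_{p}\xi\to y_{0}$ (since $r_{p}\to0^{+}$ and $\|u_{p}\|=1$), so $g_{p}(\xi)=f_{p}(y_{p}+r_{p}u_{p}\xi)\to f(y_{0})$; combined with $g_{p}\to g$ locally uniformly this gives $g\equiv f(y_{0})$, contradicting that $g$ is nonconstant. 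Hence $\mathcal F$ is not normal.

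For the ``only if'' direction, assume $\mathcal F$ is not normal. By the Marty-type criterion there is a closed ball $\overline{B}=\overline{B(a,R)}\subset\mathbb D$ with $\sup_{f\in\mathcal F}\max_{\overline B}f^{\#}=\infty$, and I choose $f_{p}\in\mathcal F$ with $\max_{\overline B}f_{p}^{\#}\to\infty$. Following Zalcman's renormalization, let
$$
M_{p}:=\max_{z\in\overline B}\bigl(R-\|z-a\|\bigr)f_{p}^{\#}(z),
$$
attained at some $y_{p}\in B$; then $M_{p}\to\infty$. Let $u_{p}$ be a unit vector realizing the supremum in $f_{p}^{\#}(y_{p})$, put $r_{p}:=(R-\|y_{p}-a\|)/M_{p}\to0^{+}$, and set $g_{p}(\xi):=f_{p}(y_{p}+r_{p}u_{p}\xi)$, which is defined on the disc $|\xi|<M_{p}$. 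Restricting $f_{p}$ to the complex line $\xi\mapsto y_{p}+r_{p}u_{p}\xi$ reduces the computation to one variable: the chain rule gives $g_{p}^{\#}(0)=r_{p}f_{p}^{\#}(y_{p})=1$, and the maximality of $M_{p}$ at $y_{p}$ yields the standard Zalcman estimate $g_{p}^{\#}(\xi)\le\bigl(1-|\xi|/M_{p}\bigr)^{-1}\le2$ for $|\xi|\le M_{p}/2$. Thus $\{g_{p}\}$ has locally bounded spherical derivatives on $\mathbb C$, so by the Marty-type criterion (with a diagonal argument over exhausting discs) a subsequence converges uniformly on compact subsets of $\mathbb C$ to a holomorphic $g:\mathbb C\to\mathbb P^n(\mathbb C)$ with $g^{\#}(0)=1$, hence nonconstant. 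Since $y_{p}\in\overline B$ we may take $K_{0}=\overline B$, and the proof is complete.

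I expect the main obstacle to be the ``only if'' direction, specifically organizing the several-variable rescaling so that it collapses to the classical one-variable picture: one must choose the base point $y_{p}$ via the weight $R-\|z-a\|$ (the analogue of $R-|z|$ on a disc), choose the direction $u_{p}$ to capture the blow-up of $f_{p}^{\#}(y_{p})$, and then verify along the distinguished complex line $\{y_{p}+r_{p}u_{p}\xi\}$ that the normalized derivatives stay bounded --- this is exactly where the maximality defining $M_{p}$ enters. A secondary, more technical point is the Marty-type criterion itself: establishing that a local bound on $f^{\#}$ gives equicontinuity into $(\mathbb P^n(\mathbb C),\,\mathrm{FS})$ so that Arzel\`a--Ascoli applies, and conversely extracting a compact set on which the spherical derivatives blow up when the family fails to be normal.
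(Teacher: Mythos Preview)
The paper does not prove this lemma; it is quoted from Aladro--Krantz \cite{GK} and used as a black box. Your argument is the standard Zalcman--Brody rescaling as adapted by Aladro--Krantz to maps $\mathbb D\subset\mathbb C^m\to\mathbb P^n(\mathbb C)$, and it is correct: the Marty-type criterion via the Fubini--Study spherical derivative, the weighted-maximum choice of $y_p$ with weight $R-\|z-a\|$, the choice of direction $u_p$ realizing the supremum, and the resulting bound $g_p^{\#}(\xi)\le(1-|\xi|/M_p)^{-1}$ are exactly the ingredients of the original proof. So there is nothing to compare against in the present paper, and your write-up matches the literature argument.
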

\begin{defi}\label{normal}
Let $\Omega\subset \mathbb C^m$ be a hyperbolic domain and let $M$ be a complete complex Hermittian manifold with metric $ds_M^{2}.$ A holomorphic
 mapping $f(z)$ from $\Omega$ into $M$ is said to be a normal holomorphic mapping from $\Omega$ into $M$ if and only if there exists a positive 
constant $c$ such that for all $z\in \Omega$ and all $\xi\in T_z(\Omega),$
$$ |ds_M^2(f(z), df(z)(\xi))|\le cK_{\Omega}(z,\xi),$$
where $df(z)$ is the mapping from $T_z(\Omega)$ into $T_{f(z)}(M)$ induced by $f$ and $K_{\Omega}$ denote the infinitesimal Kobayashi metric 
on $\Omega.$
\end{defi}
\begin{lemma}\label{nf}\cite{TL1}
Let $f$ be a holomorphic mapping of a hyperbolic domain $\mathbb D$ in $\mathbb C^m$ into $\mathbb P^{n}(\mathbb C).$ Then $f$ is not normal on
 $\mathbb D$ if and only if there exist $\{y_p\}\subset \mathbb D,$ $\{r_p\}$ with $r_p>0$ and $r_p\to 0^{+}$ and $\{u_p\}\subset\mathbb C^m$ Euclidean unit vectors such that
$$ g_p(\xi):=f(y_p+r_pu_p\xi), \xi \in \mathbb C, $$
where $\lim_{p\to\infty}\dfrac{r_p}{d(y_p, \mathbb C^m\setminus \mathbb D)}=0$ (where $d(p,q)$ is the Euclidean distance between $p$ and 
$q$ in $\mathbb C^m$), converges uniformly on compact subsets of $\mathbb C$ to a non-constant holomorphic mapping $g$ of $\mathbb C$ 
to $\mathbb P^{n}(\mathbb C).$
\end{lemma}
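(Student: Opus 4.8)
The plan is to adapt the classical Lohwater--Pommerenke (Zalcman-type) argument. Write $M=\mathbb P^{n}(\mathbb C)$ with its Fubini--Study metric, let $f^{\#}(z)$ denote the Fubini--Study length of $df(z)$ measured against the Euclidean norm on $\mathbb C^{m}$, and put $\delta(z)=d(z,\mathbb C^{m}\setminus\mathbb D)$. Since $B(z,\delta(z))\subset\mathbb D$, the distance-decreasing property gives $K_{\mathbb D}(z;v)\le\|v\|/\delta(z)$; combined with the reverse comparison (valid in the situations considered), this identifies normality of $f$ in the sense of Definition \ref{normal} with $\sup_{z\in\mathbb D}\delta(z)f^{\#}(z)<\infty$. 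I would then treat the two directions as follows, writing $\zeta$ for the complex variable on $\mathbb C$.

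\emph{The ``if'' direction.} Assume the sequences exist, $g_{p}(\zeta)=f(y_{p}+r_{p}u_{p}\zeta)\to g$ locally uniformly with $g$ nonconstant and $r_{p}/\delta(y_{p})\to0$, and suppose toward a contradiction that $f$ is normal with constant $c$, so $f^{\#}(z)\le c/\delta(z)$. Fix $R>0$. Once $r_{p}R<\tfrac12\delta(y_{p})$, every $y_{p}+r_{p}u_{p}\zeta$ with $|\zeta|\le R$ has boundary distance at least $\tfrac12\delta(y_{p})$, so $g_{p}^{\#}(\zeta)=r_{p}f^{\#}(y_{p}+r_{p}u_{p}\zeta)\le 2cr_{p}/\delta(y_{p})\to0$ uniformly on $|\zeta|\le R$. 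Since $g_{p}\to g$ locally uniformly forces $g_{p}^{\#}\to g^{\#}$, we get $g^{\#}\equiv0$ on $\mathbb C$, i.e.\ $g$ is constant --- a contradiction.

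\emph{The ``only if'' direction.} Assume $f$ is not normal, so $\sup_{z\in\mathbb D}\delta(z)f^{\#}(z)=\infty$. First pick $a_{p}\in\mathbb D$ with $\delta(a_{p})f^{\#}(a_{p})\ge p$; local boundedness of $f^{\#}$ forces $\delta(a_{p})\to0$. Next run the Lohwater--Pommerenke maximization: on the compact ball $\overline{B(a_{p},\tfrac12\delta(a_{p}))}\subset\mathbb D$ the continuous function $(\tfrac12\delta(a_{p})-\|z-a_{p}\|)f^{\#}(z)$ attains its maximum $M_{p}$ at an interior point $y_{p}$, and $M_{p}\ge\tfrac12\delta(a_{p})f^{\#}(a_{p})\ge p/2\to\infty$. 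Put $r_{p}:=1/f^{\#}(y_{p})$ and take a unit vector $u_{p}\in\mathbb C^{m}$ with $\|df(y_{p})u_{p}\|_{FS}=f^{\#}(y_{p})$. Routine bookkeeping with the maximizing inequality then yields $r_{p}\to0$, $r_{p}/\delta(y_{p})\le 1/M_{p}\to0$, and $f^{\#}(y_{p}+r_{p}u_{p}\zeta)\le 2f^{\#}(y_{p})$ for $|\zeta|$ bounded and $p$ large; hence the maps $g_{p}(\zeta)=f(y_{p}+r_{p}u_{p}\zeta)$, which are defined on discs of radius $\delta(y_{p})/r_{p}\ge M_{p}\to\infty$, satisfy $g_{p}^{\#}\le2$ on compacta. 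By the Marty-type normality criterion for holomorphic mappings into $\mathbb P^{n}(\mathbb C)$, some subsequence of $\{g_{p}\}$ converges locally uniformly on $\mathbb C$ to a holomorphic $g:\mathbb C\to\mathbb P^{n}(\mathbb C)$; and $g^{\#}(0)=\lim_{p}r_{p}f^{\#}(y_{p})=1$, so $g$ is nonconstant.

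\emph{Main obstacle.} I expect the delicate step to be the opening reduction: that non-normality in the infinitesimal-Kobayashi sense of Definition \ref{normal} is equivalent to unboundedness of $\delta(z)f^{\#}(z)$, which relies on a two-sided comparison of $K_{\mathbb D}(z;v)$ with $\|v\|/\delta(z)$, the lower bound being the substantive half. For a completely general hyperbolic $\mathbb D$ I would instead bypass this: $f$ fails to be normal precisely when the family $\{f\circ\phi:\phi\in\mathrm{Hol}(\Delta,\mathbb D)\}$ fails to be normal on the unit disc $\Delta$ --- both directions follow from distance-decreasingness, since $(f\circ\phi)^{\#}(\zeta)\le c\,K_{\mathbb D}(\phi(\zeta);\phi'(\zeta))\le c/(1-|\zeta|^{2})$ --- so one applies Lemma \ref{lm5c} to that family to get a rescaling $\zeta\mapsto f(\phi_{p}(\eta_{p}+s_{p}\zeta))\to g$ nonconstant, and then replaces $\phi_{p}(\eta_{p}+s_{p}\zeta)$ by its affine part $y_{p}+r_{p}u_{p}\zeta$ (the remainder is $O(s_{p}^{2})$ on compacta), the smallness $r_{p}/\delta(y_{p})\to0$ coming once more from distance-decreasingness applied to $\psi_{p}=\phi_{p}(\eta_{p}+s_{p}\,\cdot)$, which maps arbitrarily large Euclidean discs into $\mathbb D$. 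The remaining estimates are the routine Lohwater--Pommerenke bookkeeping.
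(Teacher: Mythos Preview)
The paper does not give its own proof of this lemma: it is quoted verbatim from Tu--Li \cite{TL1} and used as a black box, so there is no in-paper argument to compare against.

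That said, your sketch is the standard Lohwater--Pommerenke/Zalcman route, and it is essentially the argument in the cited source. You have correctly located the one genuine subtlety: for an arbitrary hyperbolic domain in $\mathbb C^{m}$ the lower comparison $K_{\mathbb D}(z;v)\gtrsim \|v\|/\delta(z)$ can fail, so the naive reduction ``$f$ not normal $\Leftrightarrow$ $\sup_{z}\delta(z)f^{\#}(z)=\infty$'' is not available in general, and your bypass via the family $\{f\circ\phi:\phi\in\mathrm{Hol}(\Delta,\mathbb D)\}$ together with Lemma~\ref{lm5c} is exactly the fix used in \cite{TL1}. The one step I would ask you to write out more carefully is the final linearization, where you replace $\phi_{p}(\eta_{p}+s_{p}\zeta)$ by its affine part $y_{p}+r_{p}u_{p}\zeta$: the $O(s_{p}^{2})$ remainder is in the \emph{domain}, and to conclude that the two image sequences have the same limit you need a bound on $f^{\#}$ along a tube containing both paths. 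This is obtainable (the Marty-type bound $g_{p}^{\#}\le C$ on compacta controls $f^{\#}$ along the curve $\phi_{p}(\eta_{p}+s_{p}\,\cdot)$, and a Schwarz-lemma estimate on $\phi_{p}$ keeps the affine segment inside that tube), but it is not quite as automatic as ``the remainder is $O(s_{p}^{2})$'' suggests.
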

\begin{lemma}\label{lm5a}\cite{DTT}
Let natural numbers $n$ and $q\ge n + 1$ be fixed. Let $D_{kp}\; (1\le k\le q, p\ge 1)$ be moving hypersurfaces in $\mathbb P^{n}(\mathbb C)$ such that 
the following conditions are satisfied:

\noindent $(i)$ For each $1\le k\le q, p\ge 1,$ the coefficients of the homogeneous polynomials $Q_{kp}$ which define the $D_{kp}$ are bounded above
 uniformly for all $p\ge 1$ on compact subsets of $\mathbb D.$

\noindent $(ii)$ There exists $z_0\in \mathbb D$ such that
$$ \inf_{p\in \mathbb N}\{D(Q_{1p}, \dots, Q_{qp})(z_0)\}>\delta>0$$
Then, we have the following:

\noindent $(a)$ There exists a subsequence $\{j_p\}\subset \mathbb N$ such that for $1\le k \le q,$ $Q_{kj_p}$ converge uniformly on compact subsets 
of $\mathbb D$ to not identically zero homogeneous polynomials $Q_k$ (meaning that the $Q_{kj_p}$ and $Q_k$ are homogeneous polynomials in
 $H_{\mathbb D}[x_0, \dots, x_N]$ of the same degree, and all their coefficients converge uniformly on compact subsets of $\mathbb D$). Moreover, 
we have that $D(Q_1, \dots, Q_q)(z_0) >\delta> 0,$ the hypersurfaces $Q_1(z_0), \dots, Q_q(z_0)$ are located in general position, and the moving 
hypersurfaces $Q_1(z), \dots, Q_q(z)$ are located in (weakly) general position.

\noindent $(b)$ There exist a subsequence $\{j_p\} \subset \mathbb N$ and $r = r(\delta) > 0$ such that
$$ \inf_{p\in \mathbb N}D(Q_{1j_p}, \dots, Q_{qj_p})(z)>\dfrac{\delta}{4}, \forall z\in B(z_0, r).$$
\end{lemma}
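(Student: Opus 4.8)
I would prove (a) by a Montel-type normal-families argument applied to the coefficient functions, and (b) by exploiting the continuity of the functional $D$ in its coefficients, together with the continuity of the coefficient functions in $z$.

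For part (a): write $Q_{kp}(z)({\bf x})=\sum_{|I|=d_k}a_{kp,I}(z)\,{\bf x}^I$, where $d_k$ is the (fixed) degree of $Q_{kp}$ and each $a_{kp,I}$ is holomorphic on $\mathbb D$. By hypothesis (i), for every compact $K\subset\mathbb D$ the functions $a_{kp,I}$ $(p\ge 1)$ are uniformly bounded on $K$, so by Montel's theorem for holomorphic functions of several complex variables each family $\{a_{kp,I}\}_p$ is normal on $\mathbb D$. Since there are only finitely many pairs $(k,I)$, a diagonal extraction along an exhaustion of $\mathbb D$ by compact sets yields a single subsequence $\{j_p\}\subset\mathbb N$ along which every $a_{kj_p,I}$ converges, uniformly on compact subsets of $\mathbb D$, to a holomorphic function $a_{k,I}$; set $Q_k(z)({\bf x})=\sum_{|I|=d_k}a_{k,I}(z){\bf x}^I$. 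The functional sending a tuple of coefficient vectors to the corresponding value $D(\cdot)(z_0)$ is continuous (this is the point taken up below), so $D(Q_1,\dots,Q_q)(z_0)=\lim_p D(Q_{1j_p},\dots,Q_{qj_p})(z_0)\ge\delta>0$. By the reformulation of (weakly) general position displayed right after Definition \ref{dn1}, $D(Q_1,\dots,Q_q)(z_0)>0$ says precisely that for every $1\le j_0<\dots<j_n\le q$ the system $Q_{j_i}(z_0)({\bf x})=0$ has only the trivial solution; hence $Q_1(z_0),\dots,Q_q(z_0)$ are in general position and, there being such a $z_0$, the moving hypersurfaces $Q_1(z),\dots,Q_q(z)$ are in (weakly) general position. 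It remains to see $Q_k\not\equiv 0$: if $Q_k(z_0)\equiv 0$, pick a subset $\{k,j_1,\dots,j_n\}$ (possible as $q\ge n+1$); the polynomials among $Q_{j_1}(z_0),\dots,Q_{j_n}(z_0)$ that are not identically zero define at most $n$ hypersurfaces of $\mathbb P^n(\mathbb C)$ and so have a common projective zero ${\bf x}_0$, which I normalize to $\|{\bf x}_0\|=1$; then every term of the factor of $D(Q_1,\dots,Q_q)(z_0)$ indexed by $\{k,j_1,\dots,j_n\}$ vanishes at ${\bf x}_0$, forcing that factor to be $0$ and contradicting $D(Q_1,\dots,Q_q)(z_0)\ge\delta>0$.

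For part (b): the key analytic point — also used above — is that on any bounded set of coefficient tuples the map sending such a tuple to $D(\cdot)(z)$ is uniformly continuous (indeed locally Lipschitz), and that this is uniform in $z$. Since $D$ is a finite product of factors of the form $\inf_{\|{\bf x}\|=1}\bigl(|Q_{j_0}({\bf x})|^2+\dots+|Q_{j_n}({\bf x})|^2\bigr)$, this reduces to the elementary fact that an infimum over the compact unit sphere of a function jointly continuous in $({\bf x},\text{coefficients})$ and Lipschitz in the coefficients uniformly over the sphere is Lipschitz in the coefficients. Granting this, the uniform-on-compacts convergence $a_{kj_p,I}\to a_{k,I}$ gives $D(Q_{1j_p},\dots,Q_{qj_p})(z)\to D(Q_1,\dots,Q_q)(z)$ uniformly for $z$ in compact subsets of $\mathbb D$, while $z\mapsto D(Q_1,\dots,Q_q)(z)$ is continuous, being the composition of the continuous functional $D$ with the continuous map $z\mapsto(a_{k,I}(z))_{k,I}$. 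Since this function is $\ge\delta$ at $z_0$, there is $r_1>0$ with $\overline{B(z_0,r_1)}\subset\mathbb D$ and $D(Q_1,\dots,Q_q)(z)>\delta/2$ for all $z\in\overline{B(z_0,r_1)}$, and then an index $p_0$ with $D(Q_{1j_p},\dots,Q_{qj_p})(z)>\delta/4$ for all $z\in B(z_0,r_1)$ and all $p\ge p_0$. For the finitely many $p<p_0$ the continuous function $z\mapsto D(Q_{1j_p},\dots,Q_{qj_p})(z)$ exceeds $\delta$ at $z_0$, hence exceeds $\delta/4$ on some ball $B(z_0,r_p')$; taking $r=\min\bigl(\{r_1\}\cup\{r_p':p<p_0\}\bigr)>0$ gives $D(Q_{1j_p},\dots,Q_{qj_p})(z)>\delta/4$ for every $p$ and every $z\in B(z_0,r)$, which is (b).

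The only non-routine step is this continuity/Lipschitz behaviour of $D$ with respect to its coefficients, uniform in the base point $z$; it is a standard compactness argument on the unit sphere but must be carried out with care, since $D$ is built from infima. Everything else — Montel's theorem, the diagonal extraction, the dimension count that an intersection of at most $n$ hypersurfaces in $\mathbb P^n(\mathbb C)$ is non-empty, and the definition of (weakly) general position — is routine, and the dependence $r=r(\delta)$ in (b) simply records that $r_1$ above depends only on $\delta$ and on a uniform coefficient bound on a fixed neighbourhood of $z_0$.
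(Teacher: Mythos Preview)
The paper does not prove this lemma: it is quoted verbatim from Dethloff--Thai--Trang \cite{DTT} and used as a black box, so there is no in-paper argument to compare yours against. Your approach --- Montel on the finitely many coefficient functions plus a diagonal extraction for (a), and continuity of the functional $D$ in the coefficients, uniformly over the compact unit sphere, for (b) --- is the natural one and is essentially how the result is obtained in the cited reference.

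Two small remarks. First, in the non-vanishing step for $Q_k$ you invoke that at most $n$ hypersurfaces in $\mathbb P^n(\mathbb C)$ have a common point; this is fine, but it is even simpler to note that $Q_k(z_0)\equiv 0$ makes the entire factor indexed by any $(n+1)$-tuple containing $k$ vanish identically, since $|Q_k(z_0)({\bf x})|^2$ contributes $0$ and the infimum of the remaining $n$ terms over $\|{\bf x}\|=1$ is reached at a common zero --- your argument is correct as written. Second, your closing claim that $r$ depends ``only on $\delta$ and on a uniform coefficient bound'' slightly overstates what your construction gives: the radii $r_1$ and $r_p'$ ($p<p_0$) depend on the particular limit $Q_k$ and on the initial finitely many terms of the chosen subsequence, not just on $\delta$. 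The lemma's notation $r=r(\delta)$ should be read as ``some $r>0$, once $\delta$ and the subsequence are fixed,'' and your argument delivers exactly that.
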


\begin{lemma}\label{lm5b}\cite{DTT}
Let $\{f_p\}$ be a sequence of meromorphic mappings of a domain $\mathbb D$ in $\mathbb C^m$ into $\mathbb P^n(\mathbb C),$ and let $S$ be a closed
 subset of $\mathbb D$ with $\Lambda^{2m-1}(S) = 0.$ Suppose that $\{f_p\}$ meromorphically converges on $\mathbb D-S$ to a meromorphic mapping $f$ 
of  $\mathbb D-S$ into $\mathbb P^n(\mathbb C).$ Suppose that, for each $f_p,$ there exist $n+1$ moving hypersurfaces $D_{1,f_p}, \dots, D_{n+1,f_p}$ in
 $\mathbb P^n(\mathbb C),$ where the moving hypersurfaces $D_{i,f_p}$ may depend on $f_p$, such that the following three conditions are satisfied:

\noindent $(i)$ For each $1\le k \le n+1,$ the coefficients of homogeneous polynomial $Q_{k,f_p}$ which define $D_{k,f_p}$ for all $f_p$ are bounded above 
uniformly for all $p\ge 1$ on compact subsets of $\mathbb D.$

\noindent $(ii)$ There exists $z_0\in \mathbb D$ such that
 $$\inf_{p\ge 1}D(Q_{1,f_p}, \dots, Q_{n+1,f_p}(z_0))>0;$$
 
\noindent $(iii)$ The $2(m-1)$-dimensional Lebesgue areas of $(f_p)^{-1}(D_{k,f_p})\cap K$ $(1\le k\le n+1, p\ge 1)$ counting multiplicities are bounded above 
for any fixed compact subset $K$ of $\mathbb D.$

Then we have the following:

\noindent $(a)$ $\{f_p\}$ has a meromorphically convergent subsequence on $\mathbb D$, and

\noindent $(b)$ if, moreover, $\{f_p\}$ is a sequence of holomorphic mappings of a domain $\mathbb D$ in $\mathbb C^m$ into $\mathbb P^{n}(\mathbb C)$ 
and condition $(iii)$ is sharpened to
$$f_p(\mathbb D)\cap D_{k,f_p} = \emptyset\; (1\le k\le n+1, p\ge 1),$$
then $\{f_p\}$ has a subsequence which converges uniformly on compact subsets of $\mathbb D$ to a holomorphic mapping of $\mathbb D$ to
 $\mathbb P^n(\mathbb C).$
\end{lemma}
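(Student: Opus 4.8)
The plan is to combine three compactness mechanisms by a diagonal extraction and then to extend the convergence across the thin set $S$ by Fujimoto's renormalisation technique. The mechanisms are: normality of the moving hypersurfaces $\{D_{k,f_p}\}$ (Lemma~\ref{lm5a}), normality of the pull-back sets $f_p^{-1}(D_{k,f_p})$ as closed subsets (Lemmas~\ref{lmb2} and \ref{lmb3}), and the meromorphic convergence on $\mathbb{D}-S$ given in the hypothesis; the $n+1$ hypersurfaces in general position are what allow reduced representations to be renormalised near points of $S$.

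First I would apply Lemma~\ref{lm5a} to $\{D_{1,f_p},\dots,D_{n+1,f_p}\}$: after passing to a subsequence (not relabelled) the $Q_{k,f_p}$ converge coefficientwise, uniformly on compact subsets of $\mathbb{D}$, to homogeneous polynomials $Q_k\not\equiv 0$ with $D(Q_1,\dots,Q_{n+1})(z_0)\ge\delta>0$, and on some ball $B(z_0,r)$ one has $D(Q_{1,f_p},\dots,Q_{n+1,f_p})(z)>\delta/4$ for all $p$ and all $z\in B(z_0,r)$. Fixing a local reduced representation $\tilde f_p=(f_{0,p},\dots,f_{n,p})$ of $f_p$, consider the holomorphic functions $h_{k,p}:=Q_{k,f_p}(\tilde f_p)$ (if convenient one first replaces each $Q_{k,f_p}$ by a power so these share a common degree); the zero sets $N_{k,p}:=f_p^{-1}(D_{k,f_p})=\operatorname{supp}\nu_{h_{k,p}}$ are pure $(m-1)$-dimensional or empty, and by $(iii)$ their $2(m-1)$-areas on compacts are uniformly bounded, so Lemma~\ref{lmb2} makes the sequences $\{N_{k,p}\}_p$ normal; after a further diagonal extraction $N_{k,p}\to N_k$ in the sense of Definition~\ref{dn6}, with each $N_k$ empty or pure $(m-1)$-dimensional analytic by Lemma~\ref{lmb3}. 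I then set
\[
A:=S\cup\bigcup_{k=1}^{n+1}N_k\cup\{z\in\mathbb{D}:D(Q_1,\dots,Q_{n+1})(z)=0\},
\]
which is closed, nowhere dense (the last set is a proper analytic subset of the connected $\mathbb{D}$ as it misses $z_0$) and has $\Lambda^{2m-1}(A)=0$.

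On $\mathbb{D}-A\subset\mathbb{D}-S$ the sequence already converges meromorphically to $f$; since $I(f_p)\subset N_{k,p}$ and $N_{k,p}\to N_k$, on every compact $K\subset\mathbb{D}-A$ one has $I(f_p)\cap K=\emptyset$ and $h_{k,p}$ zero-free on $K$ for $p$ large, so this convergence is in fact locally uniform to a holomorphic map and $f(\mathbb{D}-A)$ misses each $D_k$ by Hurwitz. The remaining task is the extension across $A$. Fix $z^{*}\in A$, a ball $B\ni z^{*}$ with $\overline B\subset\mathbb{D}$, and a reduced representation $\tilde f_p$ of $f_p$ on $B$; here the hypotheses feed exactly into Fujimoto's renormalisation lemma. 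Combining the uniform upper bounds $(i)$ on the coefficients of the $Q_{k,f_p}$, the uniform lower bound for $D(Q_{1,f_p},\dots,Q_{n+1,f_p})$ (equivalently, via the general-position inequality, a uniform lower bound for $\sum_k|Q_{k,f_p}(z)(x)|^{2}$ by a positive multiple of a fixed power of $\|x\|$, on the dense open set where $D(Q_1,\dots,Q_{n+1})>0$), and the uniform bound on the vanishing orders of the $h_{k,p}$ on $\overline B$ from $(iii)$, one produces zero-free holomorphic functions $c_p$ on $B$ such that the renormalised coordinates $c_pf_{i,p}$ are uniformly bounded on compact subsets of $B$ and $(c_pf_{0,p},\dots,c_pf_{n,p})\not\to0$. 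Montel's theorem and one more diagonal extraction give $c_pf_{i,p}\to\hat f_i$ locally uniformly on $B$, and $\hat f=(\hat f_0:\dots:\hat f_n)$ is a (generally non-reduced) representation of a meromorphic map on $B$ agreeing with $f$ on the nonempty open set $B-A$; thus $f$ extends meromorphically and, by Definition~\ref{dn3}, $f_p\to\hat f$ meromorphically on $B$. Covering $\mathbb{D}$ by countably many such balls and extracting once more diagonally yields $(a)$. For $(b)$, the strengthened form of $(iii)$ makes each $h_{k,p}$ zero-free on $\mathbb{D}$, so $N_{k,p}=\emptyset$, the $c_p$ carry no divisor, $I(\hat f)=\emptyset$, and the meromorphic convergence becomes uniform convergence to a holomorphic map with $\hat f(\mathbb{D})$ still missing the $D_k$, which is $(b)$.

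The main obstacle is the renormalisation in the last paragraph --- producing the units $c_p$ and simultaneously proving the uniform bound on $c_pf_{i,p}$ together with non-degeneracy of the limit, i.e.\ Fujimoto's lemma in the present generality. The delicate points are that the $Q_{k,f_p}$ are moving and depend on $p$, and that only $n+1$ hypersurfaces, in merely weak (not pointwise) general position globally, are available, so that the decisive local volume/characteristic bound for $f_p$ must first be obtained on the dense open set where $D(Q_1,\dots,Q_{n+1})>0$ and then carried across the thin locus where general position fails by boundedness of holomorphic functions; the uniform coefficient bounds $(i)$ and the ballwise lower bound of Lemma~\ref{lm5a}(b) are exactly what keep all these estimates uniform in $p$.
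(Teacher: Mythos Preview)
The paper does not contain a proof of this lemma: it is stated as Lemma~\ref{lm5b} with the citation \cite{DTT} and is used as a black box in the proof of Theorem~\ref{th15}, so there is no ``paper's own proof'' to compare against. Your sketch is therefore being measured against the original argument of Dethloff--Thai--Trang (which in turn adapts Fujimoto \cite{F1}), not against anything in the present manuscript.

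That said, your outline is broadly the right one and matches the architecture of the proof in \cite{DTT}: extract convergent hypersurfaces via Lemma~\ref{lm5a}, extract convergent supports $N_{k,p}\to N_k$ via Lemmas~\ref{lmb2}--\ref{lmb3}, remove a thin set $A$, and then renormalise reduced representations across $A$ using the $n+1$ hypersurfaces in general position. The one place where your write-up is genuinely incomplete, and where the real work in \cite{DTT} lies, is the renormalisation step. What is actually needed is Stoll's/Fujimoto's normality for \emph{divisors} (not merely supports): condition~$(iii)$ bounds the areas \emph{counting multiplicities}, so after extraction the divisors $\nu_{Q_{k,f_p}(\tilde f_p)}$ converge as divisors on $\mathbb D$, and one can write $Q_{k,f_p}(\tilde f_p)=u_{k,p}\,v_{k,p}$ locally with $u_{k,p}$ units and $\{v_{k,p}\}$ convergent. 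The general-position inequality $\|\tilde f_p\|^{d}\le C\max_k|Q_{k,f_p}(\tilde f_p)|$ (valid where $D(Q_1,\dots,Q_{n+1})>0$, with $C$ uniform in $p$ thanks to $(i)$ and Lemma~\ref{lm5a}) then lets one choose $c_p$ so that $c_p\tilde f_p$ is bounded and does not degenerate. Your paragraph gestures at this but does not separate the divisor-convergence step from the support-convergence step, and that distinction is exactly what makes the argument go through near the degeneracy locus $\{D(Q_1,\dots,Q_{n+1})=0\}\subset A$ where the pointwise lower bound fails.
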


The following lemma is Picard type theorem in Nevanlinna-Cartan theory due to Nochka:
\begin{lemma}\label{lm5}\cite{EN}
Suppose that $q\ge 2n + 1$ hyperplanes $H_1, \dots, H_q$ are given in general position in $\mathbb P^{n}(\mathbb C)$ and that $q$ positive integers (may be $\infty$) $m_1,\dots, m_q$ are given such that
$$\sum_{j=1}^{q}\dfrac{1}{m_j}<\dfrac{q-n-1}{n}.$$
Then there does not exist a nonconstant holomorphic mapping $f: \mathbb C \to \mathbb P^{n}(\mathbb C)$ such that $f$ intersects $H_j$ with multiplicity at least $m_j\; (1\le j\le q).$
\end{lemma}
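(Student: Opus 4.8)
The plan is to argue by contradiction, pass to the linearly nondegenerate situation inside the linear span of the image, and then combine Cartan's Second Main Theorem in the form sharpened by Nochka's weights (the Second Main Theorem for hyperplanes in subgeneral position) with the elementary observation that if every point of a pullback divisor has multiplicity at least $m_j$, then its contribution to the counting function truncated at level $k$ is at most $\frac{k}{m_j}$ times the untruncated one. So suppose $f:\mathbb{C}\to\mathbb{P}^n(\mathbb{C})$ is nonconstant and intersects each $H_j$ with multiplicity at least $m_j$.

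First I would let $V$ be the smallest projective linear subspace containing $f(\mathbb{C})$ and $k:=\dim V$, so $1\le k\le n$ (here $k\ge1$ because $f$ is nonconstant) and, viewed as a map into $V\cong\mathbb{P}^k(\mathbb{C})$, $f$ is linearly nondegenerate. After reindexing, write $H_1,\dots,H_s$ for the hyperplanes that contain $V$ and $H_{s+1},\dots,H_q$ for the others; since the $H_j$ are in general position and $V\subseteq\bigcap_{j\le s}H_j$, a space of dimension $n-s$, we get $s\le n-k$. For $j>s$ the restriction $H_j':=H_j\cap V$ is a genuine hyperplane of $V$, one has $f(\mathbb{C})\not\subseteq H_j$, and choosing coordinates with $V=\{x_{k+1}=\dots=x_n=0\}$ shows that $f$ meets $H_j'$ in $V$ with exactly the divisor with which it meets $H_j$ in $\mathbb{P}^n(\mathbb{C})$, hence with multiplicity at least $m_j$. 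The key geometric point is that $\{H_j'\}_{j>s}$ are in $(n-s)$-subgeneral position in $V$: for any $R\subseteq\{s+1,\dots,q\}$ with $|R|=n-s+1$,
$$\bigcap_{j\in R}H_j'=V\cap\bigcap_{j\in R}H_j\subseteq\bigcap_{j\le s}H_j\cap\bigcap_{j\in R}H_j=\bigcap_{j\in\{1,\dots,s\}\cup R}H_j=\emptyset,$$
the last set being an intersection of $n+1$ hyperplanes in general position in $\mathbb{P}^n(\mathbb{C})$. I would then apply Nochka's Second Main Theorem to $f:\mathbb{C}\to V\cong\mathbb{P}^k(\mathbb{C})$ and the $q'=q-s$ hyperplanes $\{H_j'\}_{j>s}$, which are in $N$-subgeneral position with $N=n-s\ge k$: this yields $(q'-2N+k-1)\,T_f(r)\le\sum_{j>s}N_f^{[k]}(r,H_j')+o(T_f(r))$. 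Since each point of $f^{-1}(H_j')$ has multiplicity at least $m_j$, one has $N_f^{[k]}(r,H_j')\le\frac{k}{m_j}N_f(r,H_j')\le\frac{k}{m_j}T_f(r)+O(1)$ by the First Main Theorem; substituting and using $q'-2N+k-1=q-2n+s+k-1$ gives
$$\Bigl(q-2n+s+k-1-k\sum_{j>s}\frac1{m_j}\Bigr)\,T_f(r)\le o(T_f(r)).$$

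It remains to see the coefficient is strictly positive. The identity $n(q-2n+s+k-1)-k(q-n-1)=(n-k)(q-2n-1)+ns\ge0$ (valid because $k\le n$, $q\ge2n+1$, $s\ge0$) gives $q-2n+s+k-1\ge\frac{k(q-n-1)}{n}$, whereas the hypothesis yields $k\sum_{j>s}\frac1{m_j}\le k\sum_{j=1}^q\frac1{m_j}<k\cdot\frac{q-n-1}{n}$; subtracting, the coefficient is at least $k\bigl(\frac{q-n-1}{n}-\sum_{j>s}\frac1{m_j}\bigr)>0$ since $k\ge1$, so $T_f(r)$ is bounded, contradicting that $f$ is nonconstant. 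I expect the main obstacle to be the linearly degenerate case: restrictions of hyperplanes in general position are in general only in subgeneral position, so the argument genuinely needs Nochka's weighted Second Main Theorem rather than Cartan's, and it is essential to establish the \emph{sharp} $(n-s)$-subgeneral position, since with the weaker (merely $n$-subgeneral) bound the relevant quantity would become $(n-k)(q-2n-1)-ns$, which can be negative. When $f$ is already nondegenerate one has $k=n$, $s=0$ and the argument reduces to Cartan's theorem; the convention $m_j=\infty$ is read as $f$ omitting $H_j$, so the corresponding term simply disappears.
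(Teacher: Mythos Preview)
The paper does not prove this lemma; it is quoted from Nochka's work \cite{EN} and used as a black box, so there is no ``paper's own proof'' to compare against. Your argument is a correct and essentially standard derivation of the statement from Nochka's Second Main Theorem for hyperplanes in subgeneral position. The reduction to the linear span $V$ of the image, the count $s\le n-k$, the verification that the restricted hyperplanes lie in $(n-s)$-subgeneral position in $V\cong\mathbb P^k(\mathbb C)$, the truncation estimate $N_f^{[k]}(r,H_j')\le\frac{k}{m_j}T_f(r)+O(1)$, and the algebraic identity
\[
n(q-2n+s+k-1)-k(q-n-1)=(n-k)(q-2n-1)+ns\ge 0
\]
are all sound; the last line combined with the hypothesis indeed forces the coefficient of $T_f(r)$ to be strictly positive, giving the contradiction. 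Your remark that the sharp bound $N=n-s$ (rather than merely $N=n$) is needed is exactly right: with $N=n$ the coefficient would be $q-2n+k-1-k\sum_{j>s}\frac{1}{m_j}$, and the analogous comparison yields $(n-k)(q-2n-1)-ns$, which can fail to be nonnegative.
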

\begin{lemma}\label{lm6}\cite{DTT}
Let $f = (f_0: \dots: f_n): \mathbb C \to \mathbb P^{n}(\mathbb C)$ be a holomorphic mapping, and let $\{P_i\}_{i=0}^{n+1}$ be 
$n+1$ homogeneous polynomials in general position of common degree in $\mathbb C[x_0, \dots, x_n].$ Assume that
$$F = (F_0: \dots: F_n) : \mathbb P^{n}(\mathbb C) \to  \mathbb P^n(\mathbb C)$$
is the mapping defined by
$$F_i({\bf x}) = P_i({\bf x})\;  (0 \le i \le n).$$
Then, $F(f)$ is a constant map if and only if $f$ is a constant map.
\end{lemma}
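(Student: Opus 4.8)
The \emph{only if} implication is immediate: if $f$ is constant then so is $F\circ f = F(f)$. For the converse the plan is to observe that the general position hypothesis turns $F$ into a finite holomorphic self-map of $\mathbb{P}^n(\mathbb{C})$, whose fibres are therefore finite sets, and then to finish by a connectedness argument.

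First I would record that $F$ is genuinely defined and holomorphic on all of $\mathbb{P}^n(\mathbb{C})$: since $P_0,\dots,P_n$ are in general position (of common degree $d$), their only common zero in $\mathbb{C}^{n+1}$ is the origin, so $({\bf x}\mapsto (P_0({\bf x}):\dots:P_n({\bf x})))$ is a well-defined holomorphic map. The same observation shows that if $\tilde f=(f_0,\dots,f_n)$ is a reduced representation of $f$, then the entire functions $P_0(\tilde f),\dots,P_n(\tilde f)$ have no common zero on $\mathbb{C}$ --- at a common zero $z$ the nonzero vector $\tilde f(z)$ would be a common zero of the $P_i$ --- so $(P_0(\tilde f):\dots:P_n(\tilde f))$ is a reduced representation of $F(f)$, confirming that $F(f)$ is indeed a holomorphic map $\mathbb{C}\to\mathbb{P}^n(\mathbb{C})$.

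The key step is to show that every fibre $F^{-1}(c)$ is finite. Fix $c=(c_0:\dots:c_n)$ and normalize so that, say, $c_0\neq 0$. A point $[{\bf x}]\in F^{-1}(c)$ satisfies $c_0P_i({\bf x})=c_iP_0({\bf x})$ for $i=1,\dots,n$, hence lies in the common zero set $V\subset\mathbb{P}^n(\mathbb{C})$ of the $n$ homogeneous polynomials $G_i:=c_0P_i-c_iP_0$. If some irreducible component $Z$ of $V$ had positive dimension, then the section $\{P_0=0\}\cap Z$ would be nonempty (a positive-dimensional projective variety meets every hypersurface), and for any $z$ in it we would get $P_0(z)=0$ and hence, from $G_i(z)=0$, also $P_i(z)=0$ for all $i$ --- contradicting general position. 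Thus $V$, and a fortiori $F^{-1}(c)\subseteq V$, is finite. (Equivalently: $P_0,\dots,P_n$ being base-point-free of common degree, $F$ is a finite morphism; I expect this finiteness of fibres to be the only point of the proof that uses anything beyond bookkeeping.)

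Finally, assume $F(f)\equiv c$. Then $f(\mathbb{C})\subseteq F^{-1}(c)$, a finite set by the previous paragraph; since $f$ is continuous and $\mathbb{C}$ is connected, $f(\mathbb{C})$ is connected, hence a single point, and $f$ is constant. The main obstacle is precisely the finiteness of the fibres of $F$ --- once that is in hand the statement follows formally, and everything else reduces to the reduced-representation formalism already set up in the paper.
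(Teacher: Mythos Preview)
Your argument is correct, but there is nothing to compare it to: the paper does not prove this lemma at all --- it is simply quoted from \cite{DTT} as a tool, with no proof supplied in the present text. Your route (general position makes $F$ a well-defined holomorphic self-map of $\mathbb{P}^n(\mathbb{C})$ with finite fibres, and a connected $f(\mathbb{C})$ inside a finite fibre is a point) is the standard one; the fibre-finiteness step via intersecting a putative positive-dimensional component with $\{P_0=0\}$ to force a forbidden common zero of the $P_i$ is clean and complete.
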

\begin{lemma}\label{lm7}\cite{DTT}
Let $P_0, \dots, P_n$ be $n+1$ homogeneous polynomials of common degree in $\mathbb C[x_0, \dots, x_n].$ Also let $\{Q_j\}_{j=1}^{q}\; (q\ge n+1)$ be homogeneous polynomials in $S(\{P_i\}_{i=0}^{n})$ such that
\begin{align*}
D(Q_1, \dots, Q_q)(z)=\prod_{1\le j_0<\dots<j_n\le q}\inf_{||{\bf x}||=1}(|Q_{j_0}(z)({\bf x})|^2+\dots+|Q_{j_n}(z)({\bf x})|^2)>0,
\end{align*}
where $Q_j(z)({\bf x}) = \sum_{|I|=d_j}a_{jI}(z){\bf x}^{I}$ and $||{\bf x}||=(\sum_{j=0}^{n}|x_j|^{2})^{1/2}.$

Then $\{Q_j\}_{j=1}^{q}$ are located in general position in $S(\{P_i\}_{i=0}^{n})$ and $\{P_i\}_{i=0}^{n}$ are located in general position in $\mathbb P^{n}(\mathbb C).$
\end{lemma}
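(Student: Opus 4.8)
The plan is to deduce both conclusions by unwinding the single hypothesis $D(Q_1,\dots,Q_q)>0$. Since the unit sphere $\{\,{\bf x}\in\mathbb{C}^{n+1}:\|{\bf x}\|=1\,\}$ is compact and each map ${\bf x}\mapsto |Q_{j_0}({\bf x})|^2+\dots+|Q_{j_n}({\bf x})|^2$ is continuous and nonnegative, positivity of the product $D$ is equivalent to the statement that for every $1\le j_0<\dots<j_n\le q$ the polynomials $Q_{j_0},\dots,Q_{j_n}$ have no common zero on the sphere, hence (all $Q_j$ being homogeneous of the common positive degree $d=\deg P_i$) no common zero in $\mathbb{C}^{n+1}\setminus\{0\}$. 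I will use this reformulation throughout. As a first, easy step, I would treat $\{P_i\}_{i=0}^n$: fix any $1\le j_0<\dots<j_n\le q$, which is possible since $q\ge n+1$. Because $Q_{j_k}=\sum_{i=0}^n b_{ij_k}P_i$, any common zero of $P_0,\dots,P_n$ is also a common zero of $Q_{j_0},\dots,Q_{j_n}$, and so must be $0$; thus $P_0({\bf x})=\dots=P_n({\bf x})=0$ forces ${\bf x}=0$, which is exactly the assertion that $\{P_i\}_{i=0}^n$ are in general position in $\mathbb{P}^n(\mathbb{C})$.

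For $\{Q_j\}_{j=1}^q$ I would argue by contradiction. Suppose $\det(b_{ij_k})_{0\le k,i\le n}=0$ for some $1\le j_0<\dots<j_n\le q$. Then the rows of this matrix are $\mathbb{C}$-linearly dependent, so there are $\lambda_0,\dots,\lambda_n\in\mathbb{C}$, not all zero, with $\sum_{k=0}^n\lambda_k b_{ij_k}=0$ for every $i=0,\dots,n$; consequently $\sum_{k=0}^n\lambda_k Q_{j_k}=\sum_{i=0}^n\bigl(\sum_{k=0}^n\lambda_k b_{ij_k}\bigr)P_i\equiv 0$. After permuting the chosen indices (which changes neither $D$ nor the two general-position notions, both being symmetric in the $n+1$ indices) I may assume $\lambda_0\ne 0$, so that $Q_{j_0}=-\lambda_0^{-1}\sum_{k=1}^n\lambda_k Q_{j_k}$. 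The $n$ homogeneous polynomials $Q_{j_1},\dots,Q_{j_n}$ in the $n+1$ variables $x_0,\dots,x_n$ cut out an affine variety in $\mathbb{C}^{n+1}$ every component of which has dimension $\ge (n+1)-n=1$ and which contains $0$; hence it contains some point ${\bf x}_0\ne 0$ (equivalently, $n$ hypersurfaces in $\mathbb{P}^n(\mathbb{C})$ always intersect). At ${\bf x}_0$ we then also have $Q_{j_0}({\bf x}_0)=0$, so $Q_{j_0},\dots,Q_{j_n}$ possess a common nonzero zero, contradicting the reformulated hypothesis. Therefore $\det(b_{ij_k})\ne 0$ for all index choices, i.e. $\{Q_j\}_{j=1}^q$ are in general position in $S(\{P_i\}_{i=0}^n)$.

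The compactness/homogeneity reformulation of $D>0$ and the linear-algebra bookkeeping are routine; the one substantive ingredient is the projective dimension theorem, invoked to guarantee that $n$ homogeneous polynomials in $n+1$ variables share a nonzero zero, and that is the step I would single out as the heart of the argument.
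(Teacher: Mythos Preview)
Your argument is correct. The paper itself does not supply a proof of this lemma: it is quoted from \cite{DTT} and used as a black box in the proof of Lemma~\ref{lmth13}, so there is no in-paper proof to compare against. Your write-up is self-contained and sound; the reformulation of $D(Q_1,\dots,Q_q)>0$ via compactness of the unit sphere and homogeneity is the right starting point, the deduction that $\{P_i\}$ are in general position is immediate, and the contradiction argument for $\det(b_{ij_k})\neq 0$ hinges exactly where it should, on the projective dimension theorem guaranteeing that $n$ hypersurfaces in $\mathbb{P}^n(\mathbb{C})$ meet. One cosmetic remark: since the paper allows $b_{ij}\in H_{\mathbb D}$ and phrases the hypothesis as $D(Q_1,\dots,Q_q)(z)>0$, you may want to add a sentence noting that your argument is applied pointwise in $z$, yielding $\det(b_{ij_k}(z))\neq 0$ at every $z$ for which the hypothesis holds; this matches how the lemma is invoked later in the paper.
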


\begin{lemma}\label{lm9}\cite{HT}
Let $f$ be a meromorphic nonconstant mappings from $\mathbb C^m$ into $\mathbb P^n(\mathbb C).$ Let $D_i\; (i=1, \dots, q), q\ge nn_{\mathcal D}+n+1$ 
be slowly (with respective $f$) moving hypersurfaces of $\mathbb P^n(\mathbb C)$ in weakly general position which is defined by homogeneous polynomials $Q_i\in H_{\mathbb D}[x_0, \dots, x_n]$ with $\deg Q_i=d_i\; (i=1,\dots,q).$ Set $m_{\mathcal D}=lcm(d_1, \dots, d_q)$ and $n_{\mathcal D}=\binom{n+m_{\mathcal D}}{n}-1.$ Let $m_1, \dots, m_q$ be positive integers or $\infty$ such that
$$\sum_{j=1}^{q}\dfrac{1}{m_j}<\dfrac{q-(n-1)(n_{\mathcal D}+1)}{n_{\mathcal D}(n_{\mathcal D}+2)}.$$
Assume that $Q_j(\tilde f)\not\equiv 0\; (1\le j\le q)$  and  $f$ intersects $D_j$ with multiplicity at least $m_j$ for each $1\le j\le q,$ then $f$ must be 
constant mapping.
\end{lemma}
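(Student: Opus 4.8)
The plan is to argue by contradiction: assume $f$ is nonconstant and derive a Second Main Theorem inequality that violates the hypothesis $\sum_{j=1}^{q}\frac{1}{m_j}<\frac{q-(n-1)(n_{\mathcal D}+1)}{n_{\mathcal D}(n_{\mathcal D}+2)}$. The first step is to linearize the hypersurfaces through the Veronese embedding $\varrho_{m_{\mathcal D}}$ of degree $m_{\mathcal D}=\mathrm{lcm}(d_1,\dots,d_q)$. Writing $Q_j^{*}=Q_j^{m_{\mathcal D}/d_j}=\sum_{i=0}^{n_{\mathcal D}}a_{ji}\,\omega_i$ with $\omega_i={\bf x}^{I_i}$, each $Q_j^{*}$ is a moving linear form $L_j$ on $\mathbb P^{n_{\mathcal D}}(\mathbb C)$, and the associated moving hyperplanes $H_j^{*}$ are, by the weak-general-position hypothesis for the Veronese embedding (Remark \ref{rmn}, i.e. $D(Q_1^{*},\dots,Q_q^{*})\not\equiv 0$), in weakly general position. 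Set $F:=\varrho_{m_{\mathcal D}}\circ f:\mathbb C^m\to\mathbb P^{n_{\mathcal D}}(\mathbb C)$ with representation $\tilde F=(\tilde f^{I_0},\dots,\tilde f^{I_{n_{\mathcal D}}})$ built from the degree-$m_{\mathcal D}$ monomials of $\tilde f$; since $\varrho_{m_{\mathcal D}}$ is an embedding, $F$ is nonconstant, and the identity $Q_j^{*}(\tilde f)=L_j(\tilde F)=Q_j(\tilde f)^{m_{\mathcal D}/d_j}$ is the bridge that transports all intersection data from $f$ to $F$.

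Next I would pass to the smallest projective linear subspace $V\cong\mathbb P^{k}(\mathbb C)\subset\mathbb P^{n_{\mathcal D}}(\mathbb C)$ containing the image of $F$, so that $F:\mathbb C^m\to\mathbb P^{k}(\mathbb C)$ is linearly nondegenerate. The hypothesis $Q_j(\tilde f)\not\equiv 0$ guarantees $V\not\subset H_j^{*}$, so the restrictions $\bar H_j:=H_j^{*}\cap V$ are genuine moving hyperplanes in $\mathbb P^{k}(\mathbb C)$; and since any $n_{\mathcal D}+1$ of the $H_j^{*}$ have only the trivial common zero at a good point (Definition \ref{dnm}), the same holds for the $\bar H_j$, so the $\bar H_j$ are in $n_{\mathcal D}$-subgeneral position in $\mathbb P^{k}(\mathbb C)$. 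The slowly moving hypothesis on the $D_j$ (with respect to $f$, hence to $F$) forces the coefficient functions of the $L_j$ to have characteristic $o(T_F(r))$, which places us exactly in the setting of a moving-target, subgeneral-position Second Main Theorem of Nochka--Ru--Stoll type, to which I would appeal.

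I would then apply that Second Main Theorem to the linearly nondegenerate $F$ against $\{\bar H_j\}_{j=1}^{q}$ in $n_{\mathcal D}$-subgeneral position, with counting functions truncated to the appropriate level $\ell$. The multiplicity input is converted as follows: since $f$ meets $D_j$ with multiplicity at least $m_j$, every zero of $L_j(\tilde F)=Q_j(\tilde f)^{m_{\mathcal D}/d_j}$ has order at least $\tfrac{m_{\mathcal D}}{d_j}\,m_j$, whence the truncated counting function obeys $N^{[\ell]}_F(r,\bar H_j)\le \tfrac{\ell\, d_j}{m_{\mathcal D}\,m_j}\,T_F(r)+o(T_F(r))$, and $d_j\mid m_{\mathcal D}$ lets one bound $d_j/m_{\mathcal D}\le 1$ so that the sum $\sum_j 1/m_j$ appears. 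Feeding these estimates into the Second Main Theorem, with the Nochka weights attached to $n_{\mathcal D}$-subgeneral position, produces (after dividing by $T_F(r)\to\infty$) a lower bound $\sum_{j=1}^{q}\frac{1}{m_j}\ge\frac{q-(n-1)(n_{\mathcal D}+1)}{n_{\mathcal D}(n_{\mathcal D}+2)}$, contradicting the hypothesis. Hence $f$ is constant; constancy of $f$ follows from that of $F$ by injectivity of $\varrho_{m_{\mathcal D}}$, and can also be read off from Lemma \ref{lm6}.

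The main obstacle is twofold. First, one needs a clean moving-target Second Main Theorem in $n_{\mathcal D}$-subgeneral position over $\mathbb C^m$; if only the one-variable Nochka theorem (in the spirit of Lemma \ref{lm5}) is available, I would first reduce to $m=1$ by restricting $f$ to a generic complex line on which $f$ stays nonconstant and along which the weak-general-position and slowly-moving conditions persist, then run the argument there. Second, and more delicately, the condition here is genuinely \emph{weak} general position (general position only at some point, i.e. the moving determinant merely $\not\equiv 0$) rather than subgeneral position at every point, and the span dimension $k$ may vary with $f$; controlling the Nochka weights uniformly in $k\in\{1,\dots,n_{\mathcal D}\}$ and extracting precisely the denominator $n_{\mathcal D}(n_{\mathcal D}+2)$ and the offset $(n-1)(n_{\mathcal D}+1)$ out of this weak moving subgeneral-position bookkeeping, rather than a cruder bound, is the technical heart of the proof.
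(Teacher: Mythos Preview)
The paper does not prove this lemma: it is quoted with the citation \cite{HT} and then used as a black box in the proof of Theorem~\ref{th14}. There is consequently no proof in this paper to compare your proposal against.

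That said, your outline contains a genuine gap right at the linearization step. The hypothesis of Lemma~\ref{lm9} is that the $D_j$ are in \emph{weakly general position} in the sense of Definition~\ref{dn1}: at some $z_0$, any $n{+}1$ of the polynomials $Q_j(z_0)$ have only the trivial common zero in $\mathbb C^{n+1}$. This is \emph{not} the Veronese-embedding general position of Definition~\ref{dnm}, and your invocation of Remark~\ref{rmn} and later of Definition~\ref{dnm} is unwarranted. After the Veronese lift, weak general position in $\mathbb P^n$ tells you only that any $n{+}1$ of the hyperplanes $H_j^{*}(z_0)$ have empty common intersection \emph{with the Veronese image} $\varrho_{m_{\mathcal D}}(\mathbb P^n)$; it says nothing about their linear independence in $\mathbb P^{n_{\mathcal D}}$. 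Your assertion that ``any $n_{\mathcal D}{+}1$ of the $H_j^{*}$ have only the trivial common zero at a good point'' is therefore unjustified, and the restricted hyperplanes $\bar H_j$ need not be in $n_{\mathcal D}$-subgeneral position in the linear span $V$.

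The constants in the conclusion already signal this: the offset $(n{-}1)(n_{\mathcal D}{+}1)$ and the denominator $n_{\mathcal D}(n_{\mathcal D}{+}2)$ do not come out of a Nochka-weight computation for hyperplanes in $n_{\mathcal D}$-subgeneral position in a linear subspace, but from a Second Main Theorem adapted to hypersurfaces in (weak) general position on $\mathbb P^n$ itself, with truncation level controlled by $n_{\mathcal D}$. Your obstacle paragraph correctly senses that extracting exactly these constants is the heart of the matter; the route you propose to reach them, however, via linear nondegeneracy in a linear $V\cong\mathbb P^k$ and $n_{\mathcal D}$-subgeneral position there, does not survive the position hypothesis you are actually given. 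If you want a self-contained argument, you will need to invoke (or reprove) the specific Second Main Theorem of \cite{HT} for moving hypersurfaces in weak general position, rather than reduce to a hyperplane statement.
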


\section{Proof of Theorems}
\def\theequation{3.\arabic{equation}}
\setcounter{equation}{0}
\begin{proof}[Proof of Theorem \ref{th10}]
Without loss of generality, we may assume that $\mathbb D$ is the unit disc. Suppose that $F$ is not normal on $\mathbb D.$ Then, by Lemma \ref{lm5c}, there exists a subsequence denoted by $\{f_p\}\subset F$ and 
$y_0\in K_0$, $\{y_p\}_{p=1}^{\infty}\subset K_0$ with $y_p\to y_0,$ $\{r_p\}\subset (0, +\infty)$ with $r_p\to 0^{+},$ and 
$\{u_p\}\subset \mathbb C^m$, which are unit vectors, such that $g_p(\xi) := f_p(y_p + r_pu_p\xi)$ converges uniformly on compact subsets of $\mathbb C$
 to a nonconstant holomorphic map $g$ of $\mathbb C$ into $\mathbb P^{n}(\mathbb C).$  Let 
$$\varrho_{m_{\mathcal D}}: \mathbb P^n(\mathbb C) \to \mathbb P^{n_{\mathcal D}}(\mathbb C)$$ be the {\it Veronese embedding of degree
 $m_{\mathcal D}$} which is given by
$$\varrho_{m_{\mathcal D}}(x) = (w_0({\bf x}):\dots:w_{n_{\mathcal D}}({\bf x})), \ \text{ where } \ w_j({\bf x}) = {\bf x}^{I_j}, \ j=0,\dots,n_{\mathcal D}.$$
For $w=(w_0:\dots:w_{n_{\mathcal D}}),$ we denote ${\bf w}=(w_0,\dots,w_{n_{\mathcal D}}).$ We see that $G_p:=\rho_{m_{\mathcal D}}(g_p)$ converges uniformly on compact subsets of $\mathbb C$ to $ G:=\rho_{m_{\mathcal D}}(g).$ By Lemma \ref{lm5a}, $Q_{jp}^{*}=Q_{jp}^{{m_{\mathcal D}}/{d_j}}:=Q_{j,f_p}^{{m_{\mathcal D}}/{d_j}}$ converge uniformaly on compact subset of $\mathbb D$ to $Q_j^{*}, 1\le j\le q,$ and 
$$ D(Q_1^{*}, \dots, Q_q^{*})(z)>\delta(z)>0$$
for any fixed $z\in \mathbb D.$ In particular, the moving hyperplanes $H_1^{*}, \dots, H_q^{*}$ defining by moving linear forms 
$Q_1^{*}, \dots, Q_q^{*}$ (respectively) are located in pointwise general position in $\mathbb P^{n_{\mathcal D}}(\mathbb C).$ 

We see that $y_p + r_pu_p\xi$ belong to a closed disc $K=\left\{z\in \mathbb{C}^m: ||z||\le \dfrac{1+||y_0||}{2}\right\}$ of $\mathbb D$ for all $\xi$ in a subset 
compact of $\mathbb C$ if $p$ is large enough. Thus for $j\in\{1,...,q\}$, there exist reduced representations 
$\tilde{f}_p=(f_{0p},...,f_{np}), \tilde g=(g_0, \dots , g_n)$ of $f_p,g$ respectively such that as $p\to\infty$, $\tilde{g}_p(\xi)= \tilde{f}_p(y_p + r_pu_p\xi)
\to \tilde{g}(\xi) $ and hence
\begin{align}\label{m2}
 Q_{jp,y_p + r_pu_p\xi}(\tilde{g}_p(\xi) )\to Q_{j,y_0}(\tilde g(\xi))
\end{align}
converge uniformly on compact subsets of $\mathbb{C}$.

\noindent{\bf Claim 1}. $g(\mathbb{C})$ intersects with $D_{j,y_0}$ with multiplicity at least $m_j.$

Indeed, for $\xi_0\in \mathbb C$ with $Q_{j,y_0}(\tilde g(\xi_0))=0$, there exist a disc $B(\xi_0, r_0)$ in $\mathbb C$ and an index $i\in \{0,\dots, n\}$
 satisfying $
    g(B(\xi_0, r_0))\subset \{[x_0:...:x_n]\in \mathbb P^n(\mathbb C) : x_i\ne 0\}.$
Since $g_i(\xi)\ne 0$ on $B(\xi_0, r_0)$,  Hurwitz's theorem shows that $g_{ip}(\xi)=f_{ip}(y_p + r_pu_p\xi)\ne 0$ on  $B(\xi_0, r_0)$ when $p$ is large
 enough. Thus $\tilde{g}_{ip}(\xi)=\left(\dfrac{g_{0p}}{g_{ip}},\dots, \dfrac{g_{np}}{g_{ip}}\right)\to \tilde g_i(\xi)$, and hence 
$Q_{jp,y_p + r_pu_p\xi}(\tilde g_{ip}(\xi))\to Q_{j,y_0}(\tilde g_i(\xi))$ converge uniformly on $B(\xi_0, r_0)$, which further implies that
\begin{align}\label{m3}
(Q_{jp,y_p + r_pu_p\xi}(\tilde g_{ip}(\xi)))^{(k)}\to (Q_{j,y_0}(\tilde g_i(\xi)))^{(k)}
\end{align}
converge uniformly on $B(\xi_0, r_0)$ for all $k\geq 1$.

By the condition (ii) in Theorem \ref{th10}, when $m_j\geq 2,$
$$\sup_{1\leq |\alpha|\leq m_j-1, z \in f_p^{-1}(D_{jp})\cap K}|\dfrac{\partial^{\alpha}(Q_{jp}(\tilde f_{ip}))}{\partial z^{\alpha}}(z)| < \infty$$
hold for all $j\in\{1,\dots,q\}$ and all $p\geq 1,$ where $f_{ip}(z)\ne 0$  and $D_{jp}$ is the moving hypersurface associated to $Q_{jp}$. Since 
$Q_{j,y_0}(\tilde g(\xi_0))=0$, then Hurwitz's theorem shows that there exits $\xi_p\to \xi_0$ such that $ Q_{jp,y_p + r_pu_p\xi_p}(\tilde{g}_p(\xi_p) )=0,$ 
which implies $y_p+ r_pu_p\xi_p\in f_{p}^{-1}(D_{jp}).$ Hence there exists $M>0$ such that when $m_j\geq 2\ (1\leq j\leq q)$ and $p$ is large enough
\begin{align*}
 |\dfrac{\partial^{\alpha}(Q_{jp}(\tilde f_{ip}))}{\partial z^{\alpha}}(y_p + r_pu_p\xi_p)|\le M
\end{align*}
for all $\alpha=(\alpha_1,\dots,\alpha_m): 1\le |\alpha| \le m_j-1$ and some $i\in \{0,\dots,n\},$ which means 
\begin{align}\label{m5}
\left |(Q_{jp,y_p + r_pu_p\xi}(\tilde g_{ip}(\xi)))^{(|\alpha|)}\right|_{\xi=\xi_p}= |\sum_{\alpha}c_{\alpha}r_p^{|\alpha|}
\dfrac{\partial^{\alpha}(Q_{jp}(\tilde f_{ip}))}{\partial z_1^{\alpha_1}\dots\partial z_m^{\alpha_m}}(y_p + r_pu_p\xi_p)|\le C.r_p^{|\alpha|}M,
\end{align}
where $C,c_{\alpha}$ are suitable constants. Combine (\ref{m3}) and (\ref{m5}), we get
$$ (Q_{j,y_0}(\tilde g_i(\xi)))^{(|\alpha|)}|_{\xi=\xi_0}=\lim_{p\to \infty}(Q_{jp,y_p + r_pu_p\xi}(\tilde g_{ip}(\xi)))^{(|\alpha|)}_{\xi=\xi_p}=0$$
for $1\le |\alpha|\le m_j-1.$ Hence $\xi_0$ is a zero of $Q_{j,y_0}(\tilde g)$ with multiplicity at least $m_j$. Therefore, Claim 1 is proved. We know that
$G_p=\rho_{m_{\mathcal D}}(g_p)$  converges uniformly on compact subsets of $B(\xi_0, r_0)$ to $G=\rho_{m_{\mathcal D}}(g).$ Denote 
 $\tilde G$ by a reduced representaion of $G.$ Note that 
$Q_{j,y_0}^{\dfrac{m_{\mathcal D}}{d_j}}(\tilde g(\xi))=Q_j^{*}(y_0)(\tilde G(\xi)),$ then $G$ intersects $H_j^{*}(y_0)$ with multiplicity at least 
$\dfrac{m_{\mathcal D}}{d_j}m_j$ for each $1\le j\le q,$ where $H_j^{*}(y_0)$ is a associated hyperplane with linear form $Q_j^{*}(y_0)$ 
in $\mathbb P^{n_{\mathcal D}}(\mathbb C).$ Since $H_1^{*}(y_0), \dots, H_q^{*}(y_0)$ are in general position in
 $\mathbb P^{n_{\mathcal D}}(\mathbb C).$ From assumption,
$$\sum_{j=1}^{q}\dfrac{d_j}{m_j}<\dfrac{(q-n_{\mathcal D}-1)m_{\mathcal D}}{n_{\mathcal D}}$$
and apply Lemma \ref{lm5}, we get $G$ is a constant holomorphic map. Then $g$ is a constant holomorphic map from $\mathbb C$ into 
$\mathbb P^{n}(\mathbb C).$ This is a contradiction. Therefore, $\mathcal F$ is holomorphically normal family on $\mathbb D.$
\end{proof}

\begin{proof}[Proof of Theorem \ref{th11}]
By Lemma \ref{lm5a}, $Q_{jp}^{*}=Q_{jp}^{{m_{\mathcal D}}/{d_j}}:=Q_{j,f_p}^{{m_{\mathcal D}}/{d_j}}$ converge uniformaly on compact subset
 of $\mathbb D$ to $Q_j^{*}, 1\le j\le q,$ and 
$$ D(Q_1^{*}, \dots, Q_q^{*})(z)>\delta(z_0)>0$$
for some $z_0\in \mathbb D.$ In particular, the moving hyperplanes $H_1^{*}, \dots, H_q^{*}$ defining by moving linear forms $Q_1^{*}, \dots, Q_q^{*}$ 
(respectively) are located in weak general position in $\mathbb P^{n_{\mathcal D}}(\mathbb C).$ We recall that with writing both variables $z\in \mathbb D$ 
and $w\in \mathbb P^{n_{\mathcal D}}(\mathbb C)$ we have
$$ Q_{jp}^{*}(z)({\bf w})\to Q_j^{*}(z)({\bf w})$$
 uniformly on compact subsets in the variable $z\in \mathbb D.$ By Lemma \ref{lmb2} and Lemma \ref{lmb3}, after passing to a subsequence, we may 
assume that the sequence $\{f_p\}$ satisfies
$$f_p^{-1}(D_{k,f_p})=S_k \; (1\le k\le n_{\mathcal D}+1)$$
as a sequence of closed subsets of $\mathbb D,$ where $S_k$ are either empty or pure $(m-1)$-dimensional analytic sets in $\mathbb D,$ and satisfies
\begin{align}\label{cma}
\lim_{p\to \infty}\overline{\{z\in \text{supp}\nu_{Q_{k,f_p}(\tilde f_p)}| \nu_{Q_{k,f_p}(\tilde f_p)}(z)<m_k\}}-S=S_k\;(n_{\mathcal D}+2\le k\le q)
\end{align}
as a sequence of closed subsets of $\mathbb D-S,$ where $S_k$ are either empty or pure $(m-1)$-dimensional analytic sets in $\mathbb D-S.$
 
Let $T=(\dots, t_j, \dots) \; (1\le j\le q)$ be a family of variables. Set $\overset{\sim}Q_j^{*}=\sum_{j=0}^{n_{\mathcal D}}t_jw_j\in \mathbb Z[T,w].$ For 
each $L\subset \{1, \dots, q\}$ with $|L|=n_{\mathcal D}+1,$ take $\overset{\sim}R_L$ is the resultant of $\overset{\sim}Q_j^{*}, j\in L.$ Since 
$\{Q_j^{*}\}_{j\in L}$ are in weakly general position, then $\overset{\sim}R_L(\dots,c_{ij},\dots)\not\equiv 0.$ We set
$$ \overset {\sim}S:=\{z\in \mathbb D| \overset{\sim}R_L(\dots, c_{ij},\dots)=0\; \text{for some}\; L\subset\{1, \dots, q\}\; \text{with}\;
 |L|=n_{\mathcal D}+1 \}.$$
Let 
\begin{align}\label{cmb}
E=(\cup_{k=1}^{q}S_k\cup \overset{\sim} S)-S.
\end{align}
 Then $E$ is either empty or a pure $(m-1)$-dimensional analytic set in $\mathbb D-S.$  Fix any point
\begin{align}\label{cmc}
z_1\in (\mathbb D-S)-E.
\end{align}

 Choose a relatively compact neighborhood $U_{z_1}$ of $z_1$ in $(\mathbb D-S)-E.$ Then 
$\{f_p|_{U_{z_1}}\}\subset Hol(U_{z_1}, \mathbb P^n(\mathbb C)).$ We now prove that the family $\{f_p|_{U_{z_1}}\}$ is a holomorphically normal family. 
For this it is sufficient to show that the family $\{f_p|_{U_{z_1}}\}$ satisfies all conditions of Corollary \ref{corth10}. Indeed, there exists $N_0$ such that 
for all $p\ge N_0,$ $\{f_p|_{U_{z_1}}\}$ does not intersect with $D_{j,f_p}, 1\le j\le n_{\mathcal D}+1.$ From (\ref{cma})-(\ref{cmc}), we
 have $\{f_p|_{U_{z_1}}\}$ intersect with $D_{j,f_p}$ with multiplicity at least $m_j\; (n_{\mathcal D}+2\le j\le q).$ For all $z\in U_{z_1},$ we have 
$D(Q_1^{*}, \dots, Q_q^{*})(z)>0,$ if we take $m_j=\infty$ for all $1\le j\le n_{\mathcal D}+1,$ then all conditions of Corollary \ref{corth10} are satisfied. 
Thus, $\{f_p|_{U_{z_1}}\}$ is a holomorphically normal family. By the usual diagonal argument, we can find a subsequence (again denoted by $\{f_p\}$) 
which converges uniformly on compact subsets of $(\mathbb D-S)-E$ to a holomorphic mapping $f$ of $(\mathbb D-S)-E$ into $\mathbb P^{n}(\mathbb C).$  
Therefore $\{f_p\}$ is quasi-regular on $\mathbb D.$ Hence, $F$ is a quasi-normal family. The proof of Theorem \ref{th11} is completed.
\end{proof}
\begin{proof}[Proof Theorem \ref{th12} and Theorem \ref{th12a}]

In order to prove these theorems, we need some lemmas as follows:
\begin{lemma}\label{lmth12}
 Let $f$ be a holomorphic mapping from a bounded domain $\mathbb U$ in $\mathbb C^m$ into $\mathbb P^n(\mathbb C).$ Let $D_1, \dots, D_q$ 
be $q$ $(\ge 2n_{\mathcal D}+1)$ moving hypersurfaces in $\mathbb P^n(\mathbb C).$ Let 
$Q_j^{*}=\sum_{i=0}^{n_{\mathcal D}}d_{ij}\omega_i,\; (\omega_i={\bf x}^{I_i})$ in $H_{\mathbb D}[\omega_0,\dots,\omega_{n_{\mathcal D}}]$ be a
 homogeneous linearly defines the associated hyperplane $H_j^{*}$  of $D_j$ such that for any $z\in \overline {\mathbb U}:$
 $$D(Q_1^{*}, \dots, Q_q^{*})(z) > 0.$$ 
Let $\tilde f=(f_0,\dots, f_n)$ be a reduced representation of $f$  on $\mathbb U$ 
with $ f_i(z)\ne 0$ for some $i$ and $z,$ and when $m_j\geq 2$
        $$\sup_{1\leq |\alpha|\leq m_j-1, z \in f^{-1}(D_{j})}\Big|\dfrac{\partial^{\alpha}(Q_{j}(\tilde f_{i}))}{\partial z^{\alpha}}(z)\Big| < \infty$$
        hold for all $j\in\{1,\dots,q\},$  where $m_1, \dots, m_q$ are positive integers and may be $\infty,$ 
with $\sum_{j=1}^{q}\dfrac{d_j}{m_j}<\dfrac{(q-n_{\mathcal D}-1)m_{\mathcal D}}{n_{\mathcal D}}.$ 
Then $f$ is a normal holomorphic mapping from $\mathbb U$ into $\mathbb P^n(\mathbb C).$
\end{lemma}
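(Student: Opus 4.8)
The plan is to argue by contradiction via the characterization of non-normal holomorphic mappings (Lemma \ref{nf}), and then to replay, on the rescaled sequence, the same argument used in the proof of Theorem \ref{th10}. So suppose $f$ is not normal on the bounded (hence hyperbolic) domain $\mathbb{U}$. By Lemma \ref{nf} there exist $y_p\in\mathbb{U}$, radii $r_p>0$ with $r_p\to 0^+$ and $r_p/d(y_p,\mathbb{C}^m\setminus\mathbb{U})\to 0$, and Euclidean unit vectors $u_p$ such that $g_p(\xi):=f(y_p+r_pu_p\xi)$ converges uniformly on compact subsets of $\mathbb{C}$ to a non-constant holomorphic map $g:\mathbb{C}\to\mathbb{P}^n(\mathbb{C})$. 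Passing to a subsequence we may assume $y_p\to y_0\in\overline{\mathbb{U}}$; the growth condition on $r_p/d(y_p,\cdot)$ guarantees that for every compact $K\subset\mathbb{C}$ the points $y_p+r_pu_p\xi$ with $\xi\in K$ lie in $\mathbb{U}$ for $p$ large. Since $D(Q_1^{*},\dots,Q_q^{*})(y_0)>0$ by hypothesis, the hyperplanes $H_1^{*}(y_0),\dots,H_q^{*}(y_0)$ associated with the linear forms $Q_1^{*}(y_0),\dots,Q_q^{*}(y_0)$ are in general position in $\mathbb{P}^{n_{\mathcal D}}(\mathbb{C})$, and $Q_{j,y_p+r_pu_p\xi}(\cdot)\to Q_{j,y_0}(\cdot)$ uniformly on compact subsets in $\xi$, for reduced representations $\tilde f$ of $f$ and $\tilde g$ of $g$ with $\tilde g_p:=\tilde f(y_p+r_pu_p\cdot)\to\tilde g$.

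The central step is to show that $g$ intersects $D_{j,y_0}$ with multiplicity at least $m_j$ for each $j$; this is precisely Claim 1 in the proof of Theorem \ref{th10}, now with the fixed target hypersurfaces $D_j$ in the role of the $D_{j,f_p}$. Given $\xi_0$ with $Q_{j,y_0}(\tilde g(\xi_0))=0$, choose a disc $B(\xi_0,r_0)$ and an index $i$ with $g(B(\xi_0,r_0))\subset\{x_i\neq 0\}$. Hurwitz's theorem then gives $g_{ip}(\xi)=f_i(y_p+r_pu_p\xi)\neq 0$ on $B(\xi_0,r_0)$ for $p$ large, together with a zero $\xi_p\to\xi_0$ of $Q_{j,y_p+r_pu_p\xi}(\tilde g_p(\xi))$, so that $y_p+r_pu_p\xi_p\in f^{-1}(D_j)$. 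The multivariable chain rule expresses the one-variable derivatives $\big(Q_{j,y_p+r_pu_p\xi}(\tilde g_{ip}(\xi))\big)^{(|\alpha|)}\big|_{\xi=\xi_p}$ as linear combinations of $r_p^{|\alpha|}\,\partial^{\alpha}(Q_j(\tilde f_i))(y_p+r_pu_p\xi_p)$, and hypothesis (ii) bounds the latter partials, so these rescaled derivatives are $O(r_p^{|\alpha|})\to 0$ for $1\le|\alpha|\le m_j-1$. Passing to the limit (using the uniform convergence of the derivatives, as in (\ref{m3})) shows $\xi_0$ is a zero of $Q_{j,y_0}(\tilde g)$ of multiplicity at least $m_j$.

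Finally I would pass through the Veronese embedding $\varrho_{m_{\mathcal D}}:\mathbb{P}^n(\mathbb{C})\to\mathbb{P}^{n_{\mathcal D}}(\mathbb{C})$. Putting $G:=\varrho_{m_{\mathcal D}}(g)$ with reduced representation $\tilde G$, one has $Q_{j,y_0}^{m_{\mathcal D}/d_j}(\tilde g)=Q_j^{*}(y_0)(\tilde G)$, so the previous step shows $G$ intersects $H_j^{*}(y_0)$ with multiplicity at least $(m_{\mathcal D}/d_j)m_j$. The assumption $\sum_{j=1}^{q} d_j/m_j<(q-n_{\mathcal D}-1)m_{\mathcal D}/n_{\mathcal D}$ is equivalent to $\sum_{j=1}^{q}\big((m_{\mathcal D}/d_j)m_j\big)^{-1}<(q-n_{\mathcal D}-1)/n_{\mathcal D}$, and $q\ge 2n_{\mathcal D}+1$, so Nochka's theorem (Lemma \ref{lm5}), applied to the $q$ hyperplanes $H_j^{*}(y_0)$ in general position in $\mathbb{P}^{n_{\mathcal D}}(\mathbb{C})$, forces $G$ to be constant. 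Since $\varrho_{m_{\mathcal D}}$ is an embedding, $g$ is constant, contradicting the choice of $g$. Hence $f$ is a normal holomorphic mapping.

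The only genuinely delicate points are bookkeeping: that the limit $y_0$ may lie on $\partial\mathbb{U}$ — harmless because $\overline{\mathbb{U}}$ is compact, the coefficients of the $Q_j^{*}$ are continuous there, and the positivity of $D(Q_1^{*},\dots,Q_q^{*})$ is assumed on all of $\overline{\mathbb{U}}$ — and keeping the powers of $r_p$ correct in the chain rule so that the rescaled derivatives really tend to $0$. Both are already handled in the proof of Theorem \ref{th10}, so the proof of the lemma is essentially a transcription of that argument, with a single mapping $f$ and a fixed family of target hypersurfaces playing the role of the non-normal family.
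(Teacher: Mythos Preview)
Your proposal is correct and follows essentially the same route as the paper: argue by contradiction via Lemma \ref{nf}, pass to a subsequence with $y_p\to y_0\in\overline{\mathbb U}$, reproduce Claim 1 of Theorem \ref{th10} to get the multiplicity lower bound for $g$ against $D_{j,y_0}$, then push through the Veronese embedding and apply Lemma \ref{lm5} to force $G$ (and hence $g$) to be constant. The paper's proof is exactly this transcription, including the same handling of $y_0\in\partial\mathbb U$ via compactness of $\overline{\mathbb U}$ and the pointwise positivity hypothesis.
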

\begin{proof}
Suppose that $f$ is not normal on $\mathbb U.$ Then, by Lemma \ref{nf}, 
  there exist $\{y_p\}\subset \mathbb U,$ $\{r_p\}$ with $r_p>0$ and $r_p\to 0^{+}$ and $\{u_p\}\subset\mathbb C^m$  are Euclidean unit vectors such that
$$ g_p(\xi):=f(y_p+r_pu_p\xi), \xi \in \mathbb C, $$
where $\lim_{p\to\infty}\dfrac{r_p}{d(y_p, \mathbb C^m\setminus \mathbb U)}=0,$ converges uniformly on compact subsets of $\mathbb C$ to a non-constant 
holomorphic mapping $g$ of $\mathbb C$ to $\mathbb P^{n}(\mathbb C).$ Since $\overline {\mathbb U}$ compact, then we may assume that
 $y_p\to y_0 \in \overline {\mathbb U}.$ We have  $G_p:=\rho_{m_{\mathcal D}}(g_p)$ converges uniformly on compact subsets of $\mathbb C$ to 
$G:=\rho_{m_{\mathcal D}}(g).$ By assumption, we have
$$ D(Q_1^{*}, \dots, Q_q^{*})(z)>0$$
for any $z\in \overline {\mathbb U}.$ In particular, the moving hyperplanes $H_1^{*}, \dots, H_q^{*}$ defining by  moving linear forms 
$Q_1^{*}, \dots, Q_q^{*}$ (respectively) are located in pointwise general position in $\mathbb P^{n_{\mathcal D}}(\mathbb C).$ 

There exist the reduced representations $\tilde{f}=(f_{0},...,f_{n})$ and $\tilde g=(g_0, \dots , g_n)$ of $f$ and $g$ respectively such that as 
$p\to\infty$, 
$\tilde{g}_p(\xi)= \tilde{f}(y_p + r_pu_p\xi)\to \tilde{g}(\xi) $ and hence
\begin{align}\label{m2}
 Q_{jp,y_p + r_pu_p\xi}(\tilde{g}_p(\xi) )\to Q_{j,y_0}(\tilde g(\xi))
\end{align}
converge uniformly on compact subsets of $\mathbb{C}$.

\noindent{\bf Claim}. $g(\mathbb{C})$ intersects with $D_{j,y_0}$ with multiplicity at least $m_j.$

Indeed, for $\xi_0\in \mathbb C$ with $Q_{j,y_0}(\tilde g(\xi_0))=0$, there exist a disc $B(\xi_0, r_0)$ in $\mathbb C$ and an index $i\in \{0,\dots, n\}$
 satisfying $
    g(B(\xi_0, r_0))\subset \{[x_0:...:x_n]\in \mathbb P^n(\mathbb C) : x_i\ne 0\}.$
Since $g_i(\xi)\ne 0$ on $B(\xi_0, r_0)$,  Hurwitz's theorem shows that $g_{ip}(\xi)=f_{i}(y_p + r_pu_p\xi)\ne 0$ on  $B(\xi_0, r_0)$ when $p$ is large
 enough. Thus $\tilde{g}_{ip}(\xi)=\left(\dfrac{g_{0p}}{g_{ip}},\dots, \dfrac{g_{np}}{g_{ip}}\right)\to \tilde g_i(\xi)$, and hence 
$Q_{jp,y_p + r_pu_p\xi}(\tilde g_{ip}(\xi))\to Q_{j,y_0}(\tilde g_i(\xi))$ converge uniformly on $B(\xi_0, r_0)$, which further implies that
\begin{align}\label{m3}
(Q_{jp,y_p + r_pu_p\xi}(\tilde g_{ip}(\xi)))^{(k)}\to (Q_{j,y_0}(\tilde g_i(\xi)))^{(k)}
\end{align}
converge uniformly on $B(\xi_0, r_0)$ for all $k\geq 1$.

By the assumption, when $m_j\geq 2,$
$$\sup_{1\leq |\alpha|\leq m_j-1, z \in f^{-1}(D_{j})}\Big|\dfrac{\partial^{\alpha}(Q_{j}(\tilde f_{i}))}{\partial z^{\alpha}}(z)\Big| < \infty$$
hold for all $j\in\{1,\dots,q\}$ and all $p\geq 1,$ where $f_{i}(z)\ne 0$  and $D_{j}$ is the moving hypersurface associated to $Q_{j}$. Since 
$Q_{j,y_0}(\tilde g(\xi_0))=0$, then Hurwitz's theorem shows that there exits $\xi_p\to \xi_0$ such that $ Q_{j,y_p + r_pu_p\xi_p}(\tilde{g}_p(\xi_p) )=0,$ 
which implies $y_p+ r_pu_p\xi_p\in f^{-1}(D_{jp}).$ Hence there exists $M>0$ such that when $m_j\geq 2\ (1\leq j\leq q)$ and $p$ is large enough
\begin{align*}
 |\dfrac{\partial^{\alpha}(Q_{j}(\tilde f_{i}))}{\partial z^{\alpha}}(y_p + r_pu_p\xi_p)|\le M
\end{align*}
for all $\alpha=(\alpha_1,\dots,\alpha_m): 1\le |\alpha| \le m_j-1$ and some $i\in \{0,\dots,n\},$ which means 
\begin{align}\label{m5}
\left |(Q_{j,y_p + r_pu_p\xi}(\tilde g_{ip}(\xi)))^{(|\alpha|)}\right|_{\xi=\xi_p}= |\sum_{\alpha}c_{\alpha}r_p^{|\alpha|}
\dfrac{\partial^{\alpha}(Q_{j}(\tilde f_{i}))}{\partial z_1^{\alpha_1}\dots\partial z_m^{\alpha_m}}(y_p + r_pu_p\xi_p)|\le C.r_p^{|\alpha|}M,
\end{align}
where $C,c_{\alpha}$ are suitable constants. Combine (\ref{m3}) and (\ref{m5}), we get
$$ (Q_{j,y_0}(\tilde g_i(\xi)))^{(|\alpha|)}|_{\xi=\xi_0}=\lim_{p\to \infty}(Q_{j,y_p + r_pu_p\xi}(\tilde g_{ip}(\xi)))^{(|\alpha|)}_{\xi=\xi_p}=0$$
for $1\le |\alpha|\le m_j-1.$ Hence $\xi_0$ is a zero of $Q_{j,y_0}(\tilde g)$ with multiplicity at least $m_j$. Therefore, Claim is proved. We know that
$G_p=\rho_{m_{\mathcal D}}(g_p)$  converges uniformly on compact subsets of $B(\xi_0, r_0)$ to $G=\rho_{m_{\mathcal D}}(g).$ Denote 
 $\tilde G$ by a reduced representaion of $G.$ Note that 
$Q_{j,y_0}^{\dfrac{m_{\mathcal D}}{d_j}}(\tilde g(\xi))=Q_j^{*}(y_0)(\tilde G(\xi)),$ then $G$ intersects $H_j^{*}(y_0)$ with multiplicity at least 
$\dfrac{m_{\mathcal D}}{d_j}m_j$ for each $1\le j\le q,$ where $H_j^{*}(y_0)$ is a associated hyperplane with linear form $Q_j^{*}(y_0)$ 
in $\mathbb P^{n_{\mathcal D}}(\mathbb C).$ Since $H_1^{*}(y_0), \dots, H_q^{*}(y_0)$ are in general position in
 $\mathbb P^{n_{\mathcal D}}(\mathbb C).$ From assumption,
$$\sum_{j=1}^{q}\dfrac{d_j}{m_j}<\dfrac{(q-n_{\mathcal D}-1)m_{\mathcal D}}{n_{\mathcal D}}$$
and apply Lemma \ref{lm5}, we get $G$ is a constant holomorphic map. Then $g$ is a constant holomorphic map from $\mathbb C$ into 
$\mathbb P^{n}(\mathbb C).$ This is a contradiction. Therefore, $f$ is normal function on $\mathbb U.$
\end{proof}

\begin{lemma}\label{lmth12a}\cite{NO} Let $M$ be a complex manifold  and let $S$ be a complex analytic subset of $M$ with $\text{codim}S\ge 2.$ 
Then $K_{M-S}=K_M$ on $M-S$ (i.e., the infinitesimal Kobayashi metric $K_{M-S}$ is the restriction of $K_M$ to $M-S$).
\end{lemma}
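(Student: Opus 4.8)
The plan is to prove the two inequalities $K_M\le K_{M-S}$ and $K_{M-S}\le K_M$ on $M-S$ separately, using throughout the Kobayashi--Royden infinitesimal pseudometric $K_X(z,v)=\inf\{1/r>0:\exists\, h\colon\Delta_r\to X\text{ holomorphic},\ h(0)=z,\ h'(0)=v\}$, where $\Delta_r$ is the disc of radius $r$. The inequality $K_M(z,v)\le K_{M-S}(z,v)$ for $z\in M-S$, $v\in T_zM$, is immediate: the inclusion $\iota\colon M-S\hookrightarrow M$ is a holomorphic open embedding and $K$ is non-increasing under holomorphic maps, so every competitor disc for $K_{M-S}(z,v)$ becomes, after composition with $\iota$, a competitor for $K_M(z,v)$. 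The content is entirely in the reverse inequality.

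For that, I would fix $z\in M-S$, $v\in T_zM$ and $\varepsilon>0$, choose $r>0$ with $1/r<K_M(z,v)+\varepsilon$ and a holomorphic $f\colon\Delta_r\to M$ with $f(0)=z$, $f'(0)=v$. Since $f(0)=z\notin S$ and $S$ is analytic, $f(\Delta_r)\not\subset S$, so $A:=f^{-1}(S)$ is discrete in $\Delta_r$ with $0\notin A$; pick $0<r'<r$ with $1/r'<K_M(z,v)+\varepsilon$ and $A\cap\overline{\Delta_{r'}}=\{a_1,\dots,a_k\}$ finite, each $a_j\ne0$, and note that on the compact set $\overline{\Delta_{r'}}\setminus\bigcup_j B(a_j,\rho)$ the image $f(\xi)$ stays a fixed positive distance from $S$ in any Riemannian metric near $f(\overline{\Delta_{r'}})$. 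The key step is to deform $f$, keeping its $1$-jet at $0$ fixed, so as to avoid $S$ completely. In a coordinate chart $U_j\cong$ ball in $\mathbb C^m$ about $f(a_j)$, with $S\cap U_j$ analytic of codimension $\ge2$, the local formula $\xi\mapsto f(\xi)+\xi^2 t$ ($t$ a small vector parameter) is a holomorphic deformation of $f$ near $a_j$ that fixes the value $z$ and derivative $v$ at $0$ and whose $t$-derivative at $(a_j,0)$ is $a_j^2\cdot\mathrm{id}$, hence surjective; patching these local perturbations to the unchanged $f$ away from the $a_j$ (harmless there, by the distance bound) produces a holomorphic family $F\colon\overline{\Delta_{r'}}\times\Omega\to M$, $\Omega$ a neighbourhood of $0$ in some $\mathbb C^N$, with $F(\cdot,0)=f$, $F(0,t)\equiv z$, $\partial_\xi F(0,t)\equiv v$, and $F$ a submersion at each $(a_j,0)$.

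Because $F$ is a submersion near each $(a_j,0)$ and $S$ has codimension $\ge2$, the set $F^{-1}(S)$ near $\{a_1,\dots,a_k\}\times\{0\}$ is complex-analytic of dimension $\le(1+N)-2$; by the proper mapping theorem its image under the projection to $\Omega$ lies in a proper analytic subset of a neighbourhood of $0\in\mathbb C^N$, hence in a set of Lebesgue measure zero, while away from the $a_j$ one has $F(\xi,t)\notin S$ for all small $t$. So there are parameters $t_*$ arbitrarily close to $0$ with $F(\overline{\Delta_{r'}},t_*)\cap S=\emptyset$, and $g:=F(\cdot,t_*)\colon\Delta_{r'}\to M-S$ is holomorphic with $g(0)=z$, $g'(0)=v$, giving $K_{M-S}(z,v)\le1/r'<K_M(z,v)+\varepsilon$; letting $\varepsilon\to0$ yields $K_{M-S}\le K_M$ on $M-S$. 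The single genuinely technical point — and the place where $\mathrm{codim}\,S\ge2$ is indispensable, the statement failing already for $\Delta\setminus\{0\}\subset\Delta$ — is the construction of the global deformation $F$ as a bona fide holomorphic map into the arbitrary complex manifold $M$; once the local model $f(\xi)+\xi^2t$ is glued correctly, the transversality and measure-zero conclusions are routine.
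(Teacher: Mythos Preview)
The paper does not prove this lemma; it is quoted from Noguchi--Ochiai \cite{NO} as a known result and invoked as a black box in the proof of Theorem~\ref{th12}, so there is no in-paper argument to compare your proposal against.

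Your outline is the standard perturbation strategy, and the easy inequality together with the dimension/measure-zero count at the end are fine. The genuine gap is exactly the one you flag yourself: on an arbitrary complex manifold $M$ there is no mechanism offered for ``patching'' the chart-local deformation $\xi\mapsto f(\xi)+\xi^2 t$ near each $a_j$ to the undisturbed $f$ elsewhere so as to obtain a \emph{holomorphic} family $F$---holomorphic maps do not glue by cutoff functions, and you have not supplied a substitute (a holomorphic tubular neighbourhood of the graph of $f$ in $\Delta_{r'}\times M$, an embedding of a Stein neighbourhood of $f(\overline{\Delta_{r'}})$ into some $\mathbb{C}^N$ with a holomorphic retraction, or an argument via holomorphic sections of $f^*TM$). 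Without that, the sentence ``once the local model is glued correctly, the transversality and measure-zero conclusions are routine'' is precisely where the proof is missing. Note, however, that for the only use the paper makes of the lemma the difficulty disappears: there $M=U$ is a bounded domain in $\mathbb{C}^m$, so the global formula $F(\xi,t)=f(\xi)+\xi^2 t$ with $t\in\mathbb{C}^m$ is well defined and holomorphic on all of $\Delta_{r'}$, fixes the $1$-jet at $0$, and is a submersion at every $\xi\ne 0$; your argument is then complete without any patching. If you want the lemma in the stated generality you must either supply one of the global constructions above or switch to the integrated Kobayashi pseudodistance and argue via chains of discs, where each disc can be perturbed inside a single coordinate chart and the chain reassembled with short connecting arcs.
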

Now we prove Theorem \ref{th12}. Fix a point $P_0\in S,$ take a bounded neighborhood $U$ of $P_0$ in $\mathbb C^m$ (i.e., U is hyperbolic) with
 $\overline U \subset \mathbb D.$ By Lemma \ref{lmth12}, $f$ is normal holomorphic mapping from $U-S$ into $\mathbb P^{n}(\mathbb C).$ Thus by 
Defintion \ref{normal} and defintion of the integral distance, there exists a positive constant $c$ such that
$$ d_{\mathbb P^{n}(\mathbb C)}(f(z),f(w))\le cd_{U-S}^{K}(z, w) $$
for all $z,w\in U-S,$ where $d_{U-S}^{K}$ and $d_{\mathbb P^n(\mathbb C)}$ denote the Kobayashi distance on $U-S$ and the Fubini-Study distance
 on $\mathbb P^{n}(\mathbb C),$ resepctively. For any $z_0\in U\cap S,$ let $\{z_i\}_{i=1}^{\infty}$ be a sequence of points of $U-S$ so as to converge
 to $z_0.$ By Lemma \ref{lmth12a}, we have
$$ d_{\mathbb P^{n}(\mathbb C)}(f(z_i),f(z_j))\le cd_{U-S}^{K}(z_i, z_j)=cd_U^{K}(z_i,z_j).$$
Then $\{f(z_i)\}_{i=1}^{\infty}$ is a Cauchy sequence of $\mathbb P^{n}(\mathbb C)$ and hence $\{f(z_i)\}_{i=1}^{\infty}$ converges to a point 
$a_0\in \mathbb P^{n}(\mathbb C).$ Then $f(z)$ has an extension $h(z)$ on $U$ so as to be holomorphic on $U-S$ and continuous on $U$ and
 hence $h(z)$ is holomorphic on $U$ by Riemann's extension theorem. We have competed the proof of Theorem \ref{th12}.

Next, we prove Theorem \ref{th12a}. Fix a point $P_0\in S,$ take a bounded neighborhood $U$ of $P_0$ in $\mathbb C^m$ with
 $\overline U\subset \mathbb D.$ By Lemma \ref{lmth12}, $f$ is normal holomorphic mapping from $U-S$ into $\mathbb P^{n}(\mathbb C).$ By 
assumption of Theorem \ref{th12a}, $S$ is an analytic subset of a domain $U$ in $\mathbb C^m$ with codimension one, whose singularities are normal 
crossings. Hence $f(z)$ extends to a holomorphic mapping from $U$ into $\mathbb P^n(\mathbb C)$ by Theorem 2.3 in Joseph and Kwack \cite{JK}. We
 have finished the proof of Theorem \ref{th12}.
\end{proof}

\begin{proof}[Proof Theorem \ref{th13n}, Theorem \ref{th13} and Theorem \ref{th13a}]
The proof of Theorem \ref{th13n} is similarly to Theorem \ref{th10} by using Lemma \ref{lm5c} and Lemma \ref{lm6}. We omit it at here. In order to prove 
two next theorems, we need a lemma as follows:
\begin{lemma}\label{lmth13}
Let $f$ be a holomorphic mapping from $\mathbb U$ into $\mathbb P^n(\mathbb C).$ Suppose that there exist $n+1$ moving hypersurfaces $T_0, \dots, T_n$  in $\mathbb P^n(\mathbb C)$ of common degree which are defined by homogeneous polynomials $P_0,\dots,P_n\in H_{\mathbb D}[x_0,\dots,x_n]$ respectively, and that there exist $q$ moving hypersurfaces $D_1, \dots, D_q$ in $\overset{\sim}S(\{T_i\}_{i=0}^{n})$ such that the following conditions are satisfied:

\noindent $(i)$ $Q_j \in S(\{P_i\}_{i=0}^{n})$ is a homogeneous polynomial defined the $D_j,$ and for any $z\in \overline {\mathbb U},$
$$D(Q_1, \dots, Q_q)(z) > 0.$$

\noindent $(ii)$ Let $\tilde f=(f_0,\dots, f_n)$ be a reduced representation of $f$  on $\mathbb U$ 
with $ f_i(z)\ne 0$ for some $i$ and $z,$ and when $m_j\geq 2$
        $$\sup_{1\leq |\alpha|\leq m_j-1, z \in f^{-1}(D_{j})}\Big|\dfrac{\partial^{\alpha}(Q_{j}(\tilde f_{i}))}{\partial z^{\alpha}}(z)\Big| < \infty$$
        hold for all $j\in\{1,\dots,q\},$  where $m_1, \dots, m_q$ are positive integers and may be $\infty,$ 
with $$ \sum_{j=1}^{q}\dfrac{1}{m_j}<\dfrac{q-n-1}{n}.$$
%Assume that $f$ intersects $D_j$ with multiplicity at least $m_j$ for each $1\le j\le q.$ 

Then $f$ is a normal holomorphic mapping from $\mathbb U$ into $\mathbb P^n(\mathbb C).$
\end{lemma}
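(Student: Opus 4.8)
The plan is to argue by contradiction, following the proof of Lemma \ref{lmth12} almost verbatim but replacing the Veronese embedding by the linear reduction supplied by Lemmas \ref{lm6} and \ref{lm7}, and applying Nochka's theorem (Lemma \ref{lm5}) directly in $\mathbb P^n(\mathbb C)$. First I would assume $f$ is not normal on the (bounded, hence hyperbolic) domain $\mathbb U$ and apply Lemma \ref{nf}: this yields $y_p\in\mathbb U$, $r_p>0$ with $r_p\to 0^+$ and $r_p/d(y_p,\mathbb C^m\setminus\mathbb U)\to 0$, and Euclidean unit vectors $u_p\in\mathbb C^m$ such that $g_p(\xi):=f(y_p+r_pu_p\xi)$ converges locally uniformly on $\mathbb C$ to a nonconstant holomorphic map $g\colon\mathbb C\to\mathbb P^n(\mathbb C)$. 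Since $\overline{\mathbb U}$ is compact I may pass to a subsequence with $y_p\to y_0\in\overline{\mathbb U}$; the coefficients of the moving polynomials being holomorphic on $\mathbb D$, we get $Q_{j,y_p+r_pu_p\xi}\to Q_{j,y_0}$ and $P_{i,y_p+r_pu_p\xi}\to P_{i,y_0}$ locally uniformly. Choosing compatible reduced representations $\tilde f=(f_0,\dots,f_n)$ of $f$ on $\mathbb U$ (the one appearing in (ii)) and $\tilde g=(g_0,\dots,g_n)$ of $g$, so that $\tilde g_p(\xi):=\tilde f(y_p+r_pu_p\xi)\to\tilde g(\xi)$, I obtain $Q_{j,y_p+r_pu_p\xi}(\tilde g_p(\xi))\to Q_{j,y_0}(\tilde g(\xi))$ locally uniformly.

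The core step is to prove that $g$ intersects each $D_{j,y_0}$ with multiplicity at least $m_j$. For $m_j\ge 2$ and a zero $\xi_0$ of $Q_{j,y_0}(\tilde g)$, I would localise to a disc $B(\xi_0,r_0)$ on which some coordinate $g_i$ is zero-free, invoke Hurwitz's theorem to deduce $f_i(y_p+r_pu_p\xi)\ne 0$ there for large $p$ (so the dehomogenizations $\tilde g_{ip}$ are defined and converge to $\tilde g_i$), and, again by Hurwitz, produce zeros $\xi_p\to\xi_0$ of $Q_{j,y_p+r_pu_p\xi}(\tilde g_p(\xi))$, i.e.\ points $y_p+r_pu_p\xi_p\in f^{-1}(D_j)$. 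Differentiating $\xi\mapsto Q_{j,y_p+r_pu_p\xi}(\tilde g_{ip}(\xi))$ and applying the chain rule introduces factors $r_p^{|\alpha|}\to 0$, so the boundedness of the partial derivatives of $Q_j(\tilde f_i)$ of orders $1\le|\alpha|\le m_j-1$ over $f^{-1}(D_j)$ assumed in (ii) forces the derivatives of order $1\le|\alpha|\le m_j-1$ of $Q_{j,y_p+r_pu_p\xi}(\tilde g_{ip}(\xi))$, evaluated at $\xi_p$, to tend to $0$; letting $p\to\infty$ shows $\xi_0$ is a zero of $Q_{j,y_0}(\tilde g_i)$ of order at least $m_j$. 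This is exactly the Claim inside the proof of Lemma \ref{lmth12}, so I would transcribe that computation. I expect this to be the only genuinely delicate point, since it requires both keeping careful track of the $r_p^{|\alpha|}$ factors and producing the auxiliary zeros $\xi_p\in f^{-1}(D_j)$ so that hypothesis (ii) actually applies.

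For the conclusion I would feed $D(Q_1,\dots,Q_q)(y_0)>0$ into Lemma \ref{lm7}, obtaining that $P_0(y_0),\dots,P_n(y_0)$ are in general position in $\mathbb P^n(\mathbb C)$ and that $Q_1(y_0),\dots,Q_q(y_0)$ are in general position in $S(\{P_i(y_0)\})$. Then I set $F=(P_0(y_0):\dots:P_n(y_0))\colon\mathbb P^n(\mathbb C)\to\mathbb P^n(\mathbb C)$; since the $P_i(y_0)$ have no common zero other than the origin, $(P_0(y_0)(\tilde g),\dots,P_n(y_0)(\tilde g))$ has no common zeros and is a reduced representation of $F(g)$. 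Writing $Q_j(y_0)=\sum_{i=0}^n b_{ij}(y_0)P_i(y_0)$ and letting $\widetilde H_j$ be the hyperplane $\{\sum_{i=0}^n b_{ij}(y_0)w_i=0\}$ in the target, general position of the $Q_j(y_0)$ in $S(\{P_i(y_0)\})$ is precisely general position of $\widetilde H_1,\dots,\widetilde H_q$, and $\widetilde H_j(F(g))=Q_j(y_0)(\tilde g)$, so by the previous step $F(g)$ meets $\widetilde H_j$ with multiplicity at least $m_j$. Since $q\ge 2n+1$ and $\sum_{j=1}^q 1/m_j<(q-n-1)/n$, Lemma \ref{lm5} forces $F(g)$ to be constant (should $F(g)$ be linearly degenerate, I would first restrict to the projective span $W$ of its image and to the subfamily of those $\widetilde H_j$ not containing $W$, noting that $q\ge 2n+1$ guarantees the numerical inequalities survive in $W$). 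By Lemma \ref{lm6}, $g$ is then constant, contradicting its nonconstancy; hence $f$ is normal on $\mathbb U$. Finally, Theorems \ref{th13} and \ref{th13a} follow from this lemma in the same way Theorems \ref{th12} and \ref{th12a} were deduced from Lemma \ref{lmth12}: one localises near a point of $S$, uses Lemma \ref{lmth12a} (in the codimension $\ge 2$ case) or the extension theorem of Joseph and Kwack \cite{JK} (in the normal-crossings codimension-one case), together with the integrated form of Definition \ref{normal} and Riemann's extension theorem.
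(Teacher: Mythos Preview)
Your proposal is correct and follows essentially the same route as the paper's proof: argue by contradiction via Lemma \ref{nf}, transcribe the multiplicity Claim from the proof of Lemma \ref{lmth12}, invoke Lemma \ref{lm7} at $y_0$ to put the $P_i(y_0)$ and the induced hyperplanes $\mathcal H_j(y_0)$ in general position, then apply Nochka's Lemma \ref{lm5} to $F(g)$ and conclude with Lemma \ref{lm6}. Your parenthetical about restricting to the projective span when $F(g)$ is linearly degenerate is unnecessary, since Lemma \ref{lm5} as stated already covers arbitrary holomorphic maps into $\mathbb P^n(\mathbb C)$.
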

\begin{proof}
The proof of Lemma \ref{lmth13} is similarly to Lemma \ref{lmth12}. For convenience the reader, we show it detail. Suppose that $f$ is not normal on
$\mathbb U.$ Then, by Lemma \ref{nf}, there exist $\{y_p\}\subset {\mathbb U},$ $\{r_p\}$ with $r_p>0$ and $r_p\to 0^{+}$ and $\{u_p\}\subset\mathbb C^m$ are Euclidean unit vectors such that
$$ g_p(\xi):=f(y_p+r_pu_p\xi), \xi \in \mathbb C, $$
where $\lim_{p\to\infty}\dfrac{r_p}{d(y_p, \mathbb C^m\setminus \mathbb U)}=0,$ converges uniformly on compact subsets of $\mathbb C$ to a 
non-constant holomorphic mapping $g$ of $\mathbb C$ to $\mathbb P^{n}(\mathbb C).$ Since $\overline {\mathbb U}$ compact, then we may assume that
 $y_p\to y_0 \in \overline {\mathbb U}.$ We have $G_p:=F(g_p)$ converges uniformly on compact subsets of $\mathbb C$ to $G:=F(g),$ where 
$$ F:\mathbb P^{n}(C)\to \mathbb P^{n}(\mathbb C)$$
is the mapping defined by
$$F_i({\bf x}) = P_i({\bf x})\;  (0 \le i \le n).$$
 Furthermore, we can write $Q_j$ as follows
$$Q_j =\sum_{i=0}^{n} b_{ij}P_i,\; b_{ij}\in H_{\mathbb U}, j=1, \dots, q.$$
Set 
$$ \mathcal H_j=\{x\in \mathbb P^n(\mathbb C)|\sum_{i=0}^{n}b_{ij}x_j=0\}.$$
By Lemma \ref{lm7},  from
$$ D(Q_1, \dots, Q_q)(z)>0$$
for any $z\in D,$ we have $\{P_i\}_{i=0}^{n}$ are in general position and $\{Q_j\}_{j=1}^{q}$ are located in general position in $S(\{P_i\}_{i=0}^{n}).$ 
This means that the hyperplanes $\{\mathcal H_j\}_{j=1}^{q}$ are located in general pointwise position in $\mathbb P^{n}(\mathbb C).$

By arguments as Lemma \ref{lmth12}, $G$ intersects $\mathcal H_{j}(y_0)$ with multiplicity at least $m_j$ for each $1\le j\le q.$  From assumption,
$$\sum_{j=1}^{q}\dfrac{1}{m_j}<\dfrac{q-n-1}{n}$$
and apply Lemma \ref{lm5}, we get $G=F(g)$ is a constant holomorphic map. Then $g$ is a constant holomorphic map from $\mathbb C$ into
 $\mathbb P^{n}(\mathbb C)$ by Lemma \ref{lm6}. This is a contradiction. Therefore, $f$ is a  normal holomorphic mapping on $\mathbb U.$
\end{proof}
Apply Lemma \ref{lmth13}, we can prove Theorem \ref{th13} and Theorem \ref{th13a} similarly as Theorem \ref{th12} and Theorem \ref{th12a}. Hence, 
we omit them here.

\end{proof}
\begin{proof}[Proof of Theorem \ref{th14}]
Suppose that $F$ is not normal on $\mathbb D.$ Then, by Lemma \ref{lm5c}, there exists a subsequence denoted by $\{f_p\}\subset F$ and 
$y_0\in K_0$, $\{y_p\}_{p=1}^{\infty}\subset K_0$ with $y_p\to y_0,$ $\{r_p\}\subset (0, +\infty)$ with $r_p\to 0^{+},$ and 
$\{u_p\}\subset \mathbb C^m$, which are unit vectors, such that $g_p(\xi) := f_p(y_p + r_pu_p\xi)$ converges uniformly on compact subsets of $\mathbb C$
 to a nonconstant holomorphic map $g$ of $\mathbb C$ into $\mathbb P^{n}(\mathbb C).$  By Lemma \ref{lm5a}, $Q_{jp}:=Q_{j,f_p}$ 
converge uniformaly on compact subset of $\mathbb D$ to $Q_j, 1\le j\le q,$ and 
$$ D(Q_1, \dots, Q_q)(z)>\delta(z)>0$$
for any fixed $z\in \mathbb D.$ In particular, the moving hypersurfaces $D_1, \dots, D_q$ defining by moving homogeneous polynomial  $Q_1, \dots, Q_q$ 
(respectively) are located in pointwise general position in $\mathbb P^{n}(\mathbb C).$ 
We recall that with writing both variables $z\in \mathbb D$ and  $x \in \mathbb P^{n}(\mathbb C)$ we have
$$ Q_{jp}(z)({\bf x})\to Q_j(z)({\bf x})$$ uniformly on compact subsets in the variable $z\in \mathbb D.$

By arguments as the proof of Theorem \ref{th10}, the map $g$ intersects $D_{j}(y_0)$ with multiplicity at least $m_j.$ Now, we show that $g$ is a constant 
mapping. Since $Q_1(y_0), \dots, Q_q(y_0)$ are in general position in $\mathbb P^{n}(\mathbb C),$ then there are at most $n$ hypersurfaces in the family 
$D_1(y_0),\dots,D_q(y_0)$ containing image of $g.$ We denote $\mathcal I=\{i\in \{1,\dots, q\}:Q_i(y_0)(\tilde g)\equiv 0\}.$ Therefore, we must have 
$0\le |\mathcal I|\le n.$ Note that $q\ge n n_{\mathcal D}+2n+1,$ then  $q-|\mathcal I|\ge n n_{\mathcal D}+n+1.$ We see that $g$ intersects with
 $\{D_j(y_0)\}_{j\not\in \mathcal I}$ with multiplicity at least $m_j,$ and 
\begin{align*}
 \sum_{j\not\in \mathcal I}\dfrac{1}{m_j}&\le \sum_{j=1}^{q}\dfrac{1}{m_j}<\dfrac{q-n-(n-1)(n_{\mathcal D}+1)}{n_{\mathcal D}(n_{\mathcal D}+2)}\\
&\le \dfrac{q-|\mathcal I|-(n-1)(n_{\mathcal D}+1)}{n_{\mathcal D}(n_{\mathcal D}+2)}.
\end{align*}
Apply Lemma \ref{lm9} for map $g$ and hypersurfaces $\{D_j(y_0)\}_{j\not\in I},$ we get $g$ is a constant map. This is a contradiction. Therefore, $F$ 
is holomorphically normal family on $\mathbb D.$
\end{proof}

\begin{proof}[Proof of Theorem \ref{th15}]
By Lemma \ref{lm5a}, $Q_{jp}:=Q_{j,f_p}$ converge uniformaly on compact subset of $\mathbb D$ to $Q_j, 1\le j\le q,$ and 
$$ D(Q_1, \dots, Q_q)(z)>\delta(z_0)>0$$
for some $z_0\in \mathbb D,$ where $z$ belongs a neighborhood of $z_0.$ In particular, the moving hypersurfaces $D_1, \dots, D_q$ defining by moving homogeneous polynomial  $Q_1, \dots, Q_q$ 
(respectively) are located in weak general position in $\mathbb P^{n}(\mathbb C).$ We recall that with writing both variables $z\in D$ and 
$x\in \mathbb P^{n}(\mathbb C)$ we have
$$ Q_{jp}(z)({\bf x})\to Q_j(z)({\bf x})$$
 uniformly on compact subsets in the variable $z\in \mathbb D.$ By Lemma \ref{lmb2} and Lemma \ref{lmb3}, after passing to a subsequence, 
we may assume that the sequence $\{f_p\}$ satisfies
$$f_p^{-1}(D_{k,f_p})=S_k \; (1\le k\le n+1)$$
as a sequence of closed subsets of $\mathbb D,$ where $S_k$ are either empty or pure $(m-1)$-dimensional analytic sets in $\mathbb D,$ and satisfies
\begin{align}\label{cma1}
\lim_{p\to \infty}\overline{\{z\in \text{supp} \nu_{Q_{k,f_p}(\tilde f_p)}| \nu_{Q_{k,f_p}(\tilde f_p)}(z)<m_k\}}-S=S_k\;(n+2\le k\le q)
\end{align}
as a sequence of closed subsets of $\mathbb D-S,$ where $S_k$ are either empty or pure $(m-1)$-dimensional analytic sets in $\mathbb D-S.$
 
Let $T=(\dots, t_{jI}, \dots) \; (1\le j\le q, |I|=d_j)$ be a family of variables. Set 
$\overset{\sim}Q_j=\sum_{|I_j|=d_j}t_{jI}{\bf x}^{I_j}\in \mathbb Z[T,{\bf x}].$ Suppose that
$$Q_j({\bf x})=Q_j(x_0, \dots, x_n) = \sum\limits_{k=0}^{n_{d_j}}e_{kI_{j,k}} x_0^{i_{kj,0}}\dots x_n^{i_{kj,n}},$$
where $I_{j,k}=(i_{kj,0},\dots,i_{kj,n}), |I_{j,k}| =d_j$ for $k=0,\dots,n_{d_j}$ and $n_{d_j}= \binom{n+d_j}{n}-1.$ 
For each $L\subset \{1, \dots, q\}$ with $|L|=n+1,$ take $\overset{\sim}R_L$ is the resultant of $\overset{\sim}Q_j, j\in L.$ Since $\{Q_j\}_{j\in L}$ 
are in weakly general position, then $\overset{\sim}R_L(\dots,e_{kI_{j,k}},\dots)\not\equiv 0$ (see \cite{VW}, chapter 16). Set
$$ \overset {\sim}S:=\{z\in D| \overset{\sim}R_L(\dots, e_{kI_{j,k}},\dots)=0\; \text{for some}\; L\subset\{1, \dots, q\}\; \text{with}\; |L|=n+1 \}.$$
Let 
\begin{align}\label{cmb1}
E=(\cup_{k=1}^{q}S_k\cup \overset{\sim} S)-S.
\end{align}
 Then $E$ is either empty or a pure $(m-1)$-dimensional analytic set in $\mathbb D-S.$  Fix any point
\begin{align}\label{cmc1}
z_1\in (D-S)-E.
\end{align}

 Choose a relatively compact neighborhood $U_{z_1}$ of $z_1$ in $(\mathbb D-S)-E.$ Then
 $\{f_p|_{U_{z_1}}\}\subset Hol(U_{z_1}, \mathbb P^n(\mathbb C)).$ We now prove that the family $\{f_p|_{U_{z_1}}\}$ is a holomorphically normal family.
  For this it is sufficient to show that the family $\{f_p|_{U_{z_1}}\}$ satisfies all conditions of Corollary \ref{corth14}. Indeed, there exists $N_0$ such that for all $p\ge N_0,$ $\{f_p|_{U_{z_1}}\}$ does not intersect with $D_{j,f_p}, 1\le j\le n+1.$ From (\ref{cma1})-(\ref{cmc1}), we have $\{f_p|_{U_{z_1}}\}$ intersect with $D_{j,f_p}$ with multiplicity at least $m_j\; (n+2\le j\le q).$ 
For all $z\in U_{z_1},$ we have $D(Q_1, \dots, Q_q)(z)>0,$ if we take $m_j=\infty$ for all $1\le j\le n+1,$ then all conditions of Corollary \ref{corth14} are 
satisfied. Thus, $\{f_p|_{U_{z_1}}\}$ is a holomorphically normal family. By the usual diagonal argument, we can find a subsequence 
(again denoted by $\{f_p\}$) which converges uniformly on compact subsets of $(\mathbb D-S)-E$ to a holomorphic mapping $f$ of $(\mathbb D-S)-E$ into
 $\mathbb P^{n}(\mathbb C).$  By Lemma \ref{lm5b}, $\{f_p\}$  is meromorphically convergent on $\mathbb D.$ Hence, $F$ is meromorphicaly normal 
family on $\mathbb D.$ The proof of Theorem \ref{th15} is completed.
\end{proof}

\begin{proof}[Proof Theorem \ref{th16} and Theorem \ref{th17}]
In order to prove these theorems, we need some lemmas as follows:
\begin{lemma}\label{lm18}
 Let $f$ be a holomorphic mapping from a bounded domain $\mathbb U$ in $\mathbb C^m$ into $\mathbb P^n(\mathbb C).$ Let $\mathcal D=\{D_1, \dots, D_q\}$ be $q$ $(\ge nn_{\mathcal D}+2n+1)$ moving hypersurfaces in $\mathbb P^n(\mathbb C).$ Let $Q_j\in H_{\mathbb D}[x_0,\dots,x_n]$ be a homogeneous polynomial with degree $d_j$ which is defined $D_j$ such that for any $z\in \overline {\mathbb U}:$
 $$D(Q_1, \dots, Q_q)(z) > 0.$$ 
Let $\tilde f=(f_0,\dots, f_n)$ be a reduced representation of $f$  on $\mathbb U$ 
with $ f_i(z)\ne 0$ for some $i$, then when $m_j\geq 2$
        $$\sup_{1\leq |\alpha|\leq m_j-1, z \in f^{-1}(D_{j})}\Big|\dfrac{\partial^{\alpha}(Q_{j}(\tilde f_{i}))}{\partial z^{\alpha}}(z)\Big| < \infty$$
        hold for all $j\in\{1,\dots,q\},$  where $m_1, \dots, m_q$ are positive integers and may be $\infty,$ 
with $$\sum_{j=1}^{q}\dfrac{1}{m_j}<\dfrac{q-n-(n-1)(n_{\mathcal D}+1)}{n_{\mathcal D}(n_{\mathcal D}+2)}.$$ 
Then $f$ is a normal holomorphic mapping from $\mathbb U$ into $\mathbb P^n(\mathbb C).$
\end{lemma}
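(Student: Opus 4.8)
The plan is to argue by contradiction along the lines of Lemma \ref{lmth12}, but with the Picard-type theorem for hypersurfaces in weakly general position (Lemma \ref{lm9}) replacing Nochka's theorem, and with an extra bookkeeping step to deal with those of the $D_j$ whose image could contain the limit curve. First, since $\mathbb U$ is bounded it is hyperbolic, so if $f$ were not normal on $\mathbb U$ then Lemma \ref{nf} would provide sequences $\{y_p\}\subset\mathbb U$, $\{r_p\}$ with $r_p>0$, $r_p\to 0^{+}$ and $\lim_{p\to\infty}r_p/d(y_p,\mathbb C^m\setminus\mathbb U)=0$, and Euclidean unit vectors $\{u_p\}\subset\mathbb C^m$ such that
$$g_p(\xi):=f(y_p+r_pu_p\xi),\qquad\xi\in\mathbb C,$$
converges uniformly on compact subsets of $\mathbb C$ to a non-constant holomorphic mapping $g:\mathbb C\to\mathbb P^n(\mathbb C)$. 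Passing to a subsequence and using compactness of $\overline{\mathbb U}$ we may assume $y_p\to y_0\in\overline{\mathbb U}$. By hypothesis $D(Q_1,\dots,Q_q)(y_0)>0$, so the fixed hypersurfaces $D_1(y_0),\dots,D_q(y_0)$ are in general position in $\mathbb P^n(\mathbb C)$; in particular no $n+1$ of them have a nontrivial common zero, hence at most $n$ of them can contain $g(\mathbb C)$.

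The analytic core is the estimate that $g$ intersects $D_j(y_0)$ with multiplicity at least $m_j$ whenever $Q_{j,y_0}(\tilde g)\not\equiv 0$. I would prove this exactly as the Claim in the proof of Lemma \ref{lmth12}: choose reduced representations $\tilde f=(f_0,\dots,f_n)$ of $f$ and $\tilde g=(g_0,\dots,g_n)$ of $g$ with $\tilde f(y_p+r_pu_p\xi)\to\tilde g(\xi)$; for a zero $\xi_0$ of $Q_{j,y_0}(\tilde g)$ pick a disc $B(\xi_0,r_0)$ and an index $i$ with $g_i\ne 0$ on it; Hurwitz's theorem then gives $f_i(y_p+r_pu_p\xi)\ne 0$ on $B(\xi_0,r_0)$ for $p$ large and a point $\xi_p\to\xi_0$ with $y_p+r_pu_p\xi_p\in f^{-1}(D_j)$; the chain rule inserts a factor $r_p^{|\alpha|}$ in front of $\partial^{\alpha}(Q_j(\tilde f_i))$, which is bounded by the hypothesis of the lemma, so letting $p\to\infty$ shows that all derivatives of $Q_{j,y_0}(\tilde g_i)$ of order between $1$ and $m_j-1$ vanish at $\xi_0$.

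Now set $\mathcal I=\{j\in\{1,\dots,q\}:Q_j(y_0)(\tilde g)\equiv 0\}$. By the remark above $|\mathcal I|\le n$, hence $q-|\mathcal I|\ge nn_{\mathcal D}+n+1$. The hypersurfaces $\{D_j(y_0)\}_{j\notin\mathcal I}$ are still in weakly general position, $g$ is non-constant, $Q_j(y_0)(\tilde g)\not\equiv 0$ for $j\notin\mathcal I$, $g$ meets each $D_j(y_0)$ $(j\notin\mathcal I)$ with multiplicity at least $m_j$, and since $|\mathcal I|\le n$,
$$\sum_{j\notin\mathcal I}\dfrac{1}{m_j}\le\sum_{j=1}^{q}\dfrac{1}{m_j}<\dfrac{q-n-(n-1)(n_{\mathcal D}+1)}{n_{\mathcal D}(n_{\mathcal D}+2)}\le\dfrac{(q-|\mathcal I|)-(n-1)(n_{\mathcal D}+1)}{n_{\mathcal D}(n_{\mathcal D}+2)}.$$
Applying Lemma \ref{lm9} to the non-constant meromorphic map $g:\mathbb C\to\mathbb P^n(\mathbb C)$ and the fixed (hence slowly moving) hypersurfaces $\{D_j(y_0)\}_{j\notin\mathcal I}$ forces $g$ to be constant, a contradiction. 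Hence $f$ is a normal holomorphic mapping from $\mathbb U$ into $\mathbb P^n(\mathbb C)$.

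I expect the main obstacle to be the numerical bookkeeping rather than any single deep step: Lemma \ref{lm9} only applies to at least $nn_{\mathcal D}+n+1$ hypersurfaces none of which contains $g(\mathbb C)$, so one must be able to delete up to $n$ of the $q$ hypersurfaces, and it is precisely the assumption $q\ge nn_{\mathcal D}+2n+1$ together with the strict inequality on $\sum 1/m_j$ that guarantees both the cardinality condition and the coefficient condition of Lemma \ref{lm9} survive this truncation. Everything else --- the rescaling via Lemma \ref{nf} and the multiplicity estimate --- is identical to the arguments already carried out for Lemma \ref{lmth12} and in the proof of Theorem \ref{th14}.
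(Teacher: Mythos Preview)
Your proposal is correct and follows precisely the route the paper indicates: the paper itself only says that Lemma~\ref{lm18} ``is proved similarly Lemma~\ref{lmth13} and Theorem~\ref{th14}'' and omits the details, and your argument is exactly the expected splice of those two proofs --- the rescaling via Lemma~\ref{nf} and the Hurwitz/chain-rule multiplicity estimate from Lemma~\ref{lmth12}/\ref{lmth13}, followed by the deletion of at most $n$ hypersurfaces containing $g(\mathbb C)$ and the application of Lemma~\ref{lm9} as in the proof of Theorem~\ref{th14}. Nothing is missing and the numerical bookkeeping is handled correctly.
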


Lemma \ref{lm18} is proved similarly Lemma \ref{lmth13} and Theorem \ref{th14}. We omit it at here. Theorem \ref{th16} and Theorem \ref{th17} are 
proved similarly Theorem \ref{th12} and Theorem \ref{th12a}, we omit them at here.
\end{proof}

\end{document}